\documentclass[12pt]{amsart}
\usepackage{amssymb}
\usepackage{relsize}
\usepackage[utf8]{inputenc}
\usepackage[french, english]{babel}
\usepackage{tikz}
\usepackage{tikz-cd}
\usetikzlibrary{matrix,arrows}
\usepackage[a4paper, total={6.5in, 8in}, centering]{geometry}
\usepackage{enumitem}
\usetikzlibrary{positioning}
\definecolor{processblue}{cmyk}{0.96,0,0,0}
\setcounter{tocdepth}{1}
\usepackage[colorlinks=true,hyperindex]{hyperref}
\hypersetup{
    colorlinks,
    linkcolor=black,
    citecolor=cyan,
    urlcolor=magenta
}

\newcommand{\RR}{\mathbb{R}}
\newcommand{\QQ}{\mathbb{Q}}
\newcommand{\ZZ}{\mathbb{Z}}
\newcommand{\NN}{\mathbb{N}}

\newcommand{\GG}{\mathbb{G}}
\newcommand{\AAA}{\mathbb{A}}

\newcommand{\FF}{\mathbb{F}}

\newcommand{\gf}{\mathfrak{g}}

\newcommand{\wf}{\mathfrak{w}}

\newcommand{\oh}{\mathcal{O}}

\newcommand{\gh}{\mathcal{G}}

\newcommand{\ah}{\mathcal{A}}
\newcommand{\ih}{\mathcal{I}}
\newcommand{\hh}{\mathcal{H}}
\newcommand{\uh}{\mathcal{U}}

\newcommand{\eh}{\mathcal{E}}

\newcommand{\sh}{\mathcal{S}}
\newcommand{\Zh}{\mathcal{Z}}
\newcommand{\wh}{\mathcal{W}}
\newcommand{\ch}{\mathcal{C}}
\newcommand{\Nh}{\mathcal{N}}

\newcommand{\Res}{\text{R}}
\newcommand{\Rese}{\emph{R}}

\newcommand{\spec}{\text{Spec}\,}

\newcommand{\dimn}{\text{dim}\,}

\theoremstyle{plain}
\newtheorem{thm}{Théorème}[section]
\newtheorem{lem}[thm]{Lemme}
\newtheorem{propn}[thm]{Proposition}
\newtheorem{cor}[thm]{Corollaire}

\theoremstyle{definition}
\newtheorem{defn}[thm]{Définition}

\newcommand{\rpot}[1]{ (\hspace{-0,7mm}( {#1} )\hspace{-0,7mm}) }

\begin{document}
\title{Théorie de Bruhat-Tits pour les groupes quasi-réductifs}
\author[J. Lourenço]{João Lourenço}
\address{Mathematisches Institut der Universität Bonn, Endenicher Allee 60, 53115 Bonn, Deutschland}
\email{lourenco@math.uni-bonn.de}
\subjclass{14L15 (primary), 14M15, 20G25, 20E42 (secondary)}
\keywords{grassmanniennes affines, schémas en groupes, théorie de Bruhat-Tits, groupes parahoriques, groupes pseudo-réductifs, groupes quasi-réductifs, modèles de Néron}

\selectlanguage{french}
\begin{abstract}
Soient $K$ un corps discrètement valué et hensélien, $\oh$ son anneau d'entiers supposé excellent, $\kappa$ son corps résiduel supposé parfait et $G$ un $K$-groupe quasi-réductif, c'est-à-dire lisse, affine, connexe et à radical unipotent déployé trivial. On construit l'immeuble de Bruhat-Tits $\ih(G, K)$ pour $G(K)$ de façon canonique, améliorant les constructions moins canoniques de M. Solleveld sur les corps locaux, et l'on associe  un $\oh$-modèle en groupes $\gh_{\Omega}$ de $G$ à chaque partie non vide et bornée $\Omega$ contenue dans un appartement de $\ih(G,K)$. On montre que les groupes parahoriques $\gh_{\textbf{f}}$ attachés aux facettes peuvent être caractérisés en fonction de la géométrie de leurs grassmanniennes affines, ainsi que dans la thèse de T. Richarz. Ces résultats sont appliqués ailleurs à l'étude des grassmanniennes affines tordues entières. 
\end{abstract}
\vspace*{-2cm}
\maketitle
\tableofcontents

\section{Introduction}

La théorie des immeubles de F. Bruhat et J. Tits, cf. \cite{BTI} et \cite{BTII}, pour les groupes réductifs sur un corps valué est un outil surtout combinatoire, mais aussi schématique, qui a connu un succès énorme, tant en théorie des représentations qu'en géométrie algébrique arithmétique. Le but principal de ce travail est d'étendre toute cette théorie aux groupes quasi-réductifs sur les corps discrètement valués, excellents et résiduellement parfaits, dont des indications fortes sont déjà parus depuis longtemps dans le travail \cite{Oes} de J. Oesterlé sur les nombres de Tamagawa, où des diverses propriétés arithmétiques des groupes unipotents ployés ont été remarquées pour la première fois, et aussi dans l'excellent livre \cite{BLR} de S. Bosch, W. Lütkebohmert et M. Raynaud, dans lequel le modèle de Néron (localement de type fini) des groupes quasi-réductifs nilpotents fut construit, cf. 10.2 de \cite{BLR}. Plus récemment, la classification des groupes pseudo-réductifs a permis de se battre avec tels phénomènes dans des cadres assez généraux, comme exemplifié par : la démonstration \cite{ConFin} par B. Conrad de la finitude des nombres de Tamagawa pour les groupes algébriques linéaires ; la construction par M. Solleveld en \cite{Sol} des immeubles de Bruhat-Tits pour les groupes quasi-réductifs sur les corps locaux, bien que des décompositions de Cartan et d'Iwasawa ; et la preuve \cite{Ros} de la dualité de Tate pour les groupes commutatifs pseudo-réductifs par Z. Rosengarten.

Décrivons les protagonistes de cet article, à savoir les groupes pseudo- et quasi-réductifs. Soient $K$ un corps commutatif et $G$ un $K$-groupe affine, lisse et connexe. Alors $G$ est dit pseudo-réductif (resp. quasi-réductif) s'il ne contient aucun sous-groupe distingué non trivial, qui soit lisse, unipotent et connexe (resp. et de plus déployé). A. Borel et J. Tits avaient déjà annoncé sans démonstration en \cite{BoT2}, que cette classe partage une grande quantité de propriétés en commun avec les groupes réductifs, comme par exemple, l'existence d'un système de racines relatif $\Phi(G,S)$, des sous-groupes radiciels associés aux racines, d'une bonne notion de sous-groupes pseudo-paraboliques, et des théorèmes de conjugaison de ces derniers bien que des tores déployés maximaux.  

Néanmoins, il s'agit d'une découverte récente de B. Conrad, O. Gabber et G. Prasad \cite{CGP}, complétée à la caractéristique $p=2$ lorsque $[K:K^2]>2$ en \cite{CP}, qu'il est loisible de classifier les groupes pseudo-réductifs de façon élégante. Comme cela est toujours remplit par les corps ultramétriques résiduellement parfaits et à complété séparable, on supposera dorénavant qu'on a toujours $[K:K^p]\leq p$. Alors le théorème principal de \cite{CGP} affirme que tout groupe pseudo-réductif $G$ s'obtient à partir de certains groupes pseudo-réductifs primitifs, en prenant des restrictions des scalaires, des produits et en modifiant le Cartan. Plus précisément, soient $K'$ une $K$-algèbre finie réduite, $G'$ un $K'$-groupe à fibres pseudo-réductives primitives, $C'$ un Cartan fixe de $G'$ et $C$ un groupe pseudo-réductif commutatif opérant sur $\Res_{K'/K}G'$ de telle sorte que $\Res_{K'/K} C'$ soit fixé ; alors, le quadruple $(K'/K, G', C', C)$ est dénommé une présentation de $G=(C \ltimes \Res_{K'/K}G')/\Res_{K'/K}C'$ et tout groupe pseudo-réductif admet une telle présentation, qui ne dépend que du choix d'un Cartan $C$ de $G$. Il ne reste qu'à éclaircir le sens du mot primitif, qu'on applique pour décrire les trois classes suivantes : les groupes réductifs, semi-simples, simplement connexes et absolument presque simples ; les groupes exotiques basiques, qui n'existent qu'en caractéristique $p=2$ ou $3$ et dont le diagramme de Dynkin du système de racines irréductible possède une arête étiquetée avec $p$ ; les groupes absolument non réduits basiques notés $\text{BC}_n$, qui sont les seuls groupes pseudo-déployés dont le système de racines est non-réduit de type $BC_n$. On signale que ces constructions en petites caractéristiques étaient déjà parues dans l'œuvre de J. Tits, cf. \cite{TitsOber} et \cite{TitsBour}, lequel a été motivé par des réalisations entières de certains groupes associés aux algèbres de Kac-Moody affines tordues, voir aussi \cite{TitsUniq} et \cite{TitsTwin}.

Soit maintenant $(K, \omega)$ un corps discrètement valué, hensélien, excellent et résiduellement parfait. Étant donné un groupe quasi-réductif $G$ sur $K$, le but présent est de construire des données radicielles valuées, des immeubles et des schémas en groupes parahoriques. Il faut dire que la partie combinatoire, n'impliquant que les points rationnels de $G$, a déjà été traitée par M. Solleveld, en exploitant crucialement le fait que, par exemple pour un groupe exotique basique $G$, les points rationnels $G(K)$ coïncident avec ceux de son cousin déployé $\overline{G}$, ou que le groupe des points rationnels de $\text{BC}_n$ s'identifie à $\text{Sp}_{2n}(K^{1/2})$. Une telle approche devient toutefois problématique lorsqu'on veut traiter de questions schématiques (et même la définition d'une donnée radicielle valuée est subtile, en vertu de ce que l'application $G \rightarrow \overline{G}$ induit une bijection non additive entre leurs systèmes de racines). Par conséquent, on partira du zéro en définissant ces structures combinatoires, essayant de traiter les facteurs primitifs de $G$ aussi égalitairement que possible.

\begin{thm}
Soient $K$ un corps discrètement valué, hensélien, $\oh$ son anneau d'entiers supposé excellent et $\kappa$ son corps résiduel supposé parfait. Soient $G$ un groupe quasi-réductif sur $K$, $S$ un tore maximal déployé de $G$, $Z:=Z_G(S)$ le centralisateur de $S$ et $U_a$ le sous-groupe radiciel attaché à $a \in \Phi:=\Phi(G,S)$. L'ensemble des valuations $\varphi=(\varphi_a)$ de la donnée radicielle abstraite $(Z(K), U_a(K))_{a \in \Phi}$ formées de celles qui sont compatibles à $\omega$, c'est-à-dire telles que $\varphi_a(zuz^{-1})-\varphi_a(u)=\omega(a(z))$, pour tout $z \in Z(K)$, $a\in \Phi$ et $u \in U_a(K)$, porte la structure d'un espace affine non vide dont l'espace vectoriel réel sous-jacent $V$ s'identifie au sous-espace de $X_*(S)\otimes \RR$ engendré par $\Phi^{\vee}$.
\end{thm}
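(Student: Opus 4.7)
La preuve se décompose en deux parties : établir la structure d'espace affine sur l'ensemble des valuations compatibles, puis construire une telle valuation. Pour la première, le groupe additif $V$ agit par translations $(\varphi+v)_a := \varphi_a + \langle a,v\rangle$, ce qui préserve la compatibilité à $\omega$ puisque translater $\varphi_a$ par une constante ne modifie pas la différence $\varphi_a(zuz^{-1}) - \varphi_a(u)$. Cette action est libre car $\Phi$ engendre le dual de $V = \text{Vect}_\RR(\Phi^\vee)$. Pour la transitivité, la différence $\psi_a := \varphi'_a - \varphi_a$ de deux valuations compatibles satisfait $\psi_a(zuz^{-1}) = \psi_a(u)$ pour tout $z \in Z(K)$, et les axiomes d'une donnée radicielle valuée (quasi-additivité de $\varphi_a$ et cohérence avec l'action du groupe de Weyl via les éléments $m(u)$ du normalisateur) forcent $(\psi_a)_a$ à être de la forme $\langle \cdot, v\rangle$ pour un unique $v \in V$ ; on conclut par la non-dégénérescence précédente.

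Pour la deuxième partie, on exploite la présentation $G \simeq (C \ltimes \Res_{K'/K}G')/\Res_{K'/K}C'$ de \cite{CGP, CP} pour se ramener au cas primitif, après vérification que la construction est compatible aux opérations de produit, de restriction des scalaires (y compris inséparable) et de modification par Cartan. Pour les trois classes primitives : le cas réductif est la construction classique de Bruhat-Tits \cite{BTII}, valable sur tout corps henselien à corps résiduel parfait (la perfection de $\kappa$ permettant la descente étale depuis $K^{\text{nr}}$, sur lequel $G$ devient quasi-déployé) ; dans le cas exotique basique (car. $p \in \{2,3\}$), l'isogénie purement inséparable $G \to \overline{G}$ vers le groupe cousin déployé est une bijection sur les $K$-points qui envoie $U_a$ sur le sous-groupe radiciel correspondant de $\overline{G}$, permettant de transporter la valuation canonique sur $\overline{G}(K)$ à $G(K)$ modulo un facteur d'échelle approprié selon la longueur des racines ; dans le cas $\text{BC}_n$ (car. $2$), on utilise l'identification $G(K) \simeq \text{Sp}_{2n}(K^{1/2})$ et la conservation du caractère discrètement valué sur $K^{1/2}$ (grâce à l'excellence de $\oh$, qui assure que $\oh^{1/2}$ est fini sur $\oh$) pour tirer en arrière la valuation de Bruhat-Tits sur $\text{Sp}_{2n}(K^{1/2})$, avec un facteur $\frac{1}{2}$ sur les racines multipliables.

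\textbf{Principal obstacle.} La difficulté centrale est la normalisation des valuations lors des opérations précédentes, et en particulier la vérification que la valuation finale ne dépend pas du choix de la présentation : la restriction des scalaires inséparable $\Res_{K'/K}$ qui intervient nécessairement dans les cas exotique et $\text{BC}_n$ induit des facteurs d'échelle délicats entre les racines de $G$ et celles des facteurs primitifs, et la modification par Cartan doit être traitée avec soin pour que l'action de $C$ sur les sous-groupes radiciels soit bien contrôlée. Cela exige une description explicite des $U_a$ de $G$ en termes de ceux des facteurs primitifs, afin d'assurer l'invariance de la construction.
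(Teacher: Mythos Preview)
Your plan is broadly sound, but it follows a different route from the paper's own proof, and it is worth spelling out the contrast.

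\textbf{On the affine-space part.} Your argument for freeness is fine. For transitivity, however, the claim that the axioms \og forcent $(\psi_a)_a$ à être de la forme $\langle \cdot, v\rangle$ \fg{} hides the real work. The paper's argument (the corollaire following prop.~\ref{verificacao da valorizacao}) is: after translating so that $\varphi$ and $\varphi'$ agree on one chosen element of $U_a(K)^*$ for each simple $a$, one observes that $\varphi_a(u)$ is entirely determined by the reflection hyperplane in the apartment attached to $m(u) \in N(K)$; since both valuations are compatible with $\omega$, the $N(K)$-actions on the two apartments coincide once the origins are matched, and this forces $\varphi=\varphi'$. Your sketch gestures at this but does not isolate the crucial role of the $N(K)$-action.

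\textbf{On the existence part.} Here the paper proceeds differently and more uniformly. Rather than reducing to the primitive factors and then invoking the bijections $G(K)\cong \overline{G}(K)$ (exotic) or $G(K)\cong \text{Sp}_{2n}(K^{1/2})$ (non-reduced) to transport classical Bruhat--Tits valuations, the paper first introduces \emph{quasi-systèmes de Chevalley} for arbitrary pseudo-reductive quasi-split groups (déf.~\ref{definition quasi epinglage}, prop.~\ref{proposition quasi-epinglages}): these are isomorphisms $\zeta_a$ of $\widetilde{G}^a$ with a Weil restriction of $\text{SL}_2$, $\text{SU}_3$, or $\text{BC}_1$, for each non-divisible root $a$. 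One then writes down explicit formulas for $\varphi_a$ in terms of the coordinates furnished by $\zeta_a$ and checks the axioms (V0)--(V5) directly (prop.~\ref{verificacao da valorizacao}). The general (non quasi-split) case is handled afterwards by étale descent from $K^{\text{nr}}$ (\S\ref{descente etale}).

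\textbf{What each approach buys.} Your approach is essentially Solleveld's \cite{Sol}, and the paper in fact proves a posteriori (prop.~\ref{compatibilite avec solleveld}) that the two constructions agree. Its drawback, as the introduction warns, is that the bijection $\Phi \to \overline{\Phi}$ induced by $\pi:G\to\overline{G}$ is \emph{not} additive, so transporting a valuation through it and then checking (V2), (V3) against the original root datum of $G$ is delicate---this is exactly the \og principal obstacle \fg{} you flag, and it is not a mere matter of normalisation. The quasi-Chevalley approach sidesteps this by working intrinsically with the root groups $U_a$ of $G$ itself, treating the three primitive rank-one models on an equal footing; it also yields directly the integral models $\uh_{a,\Omega}$ needed for the schematic theory of \S\ref{section geometrique}, whereas your approach would require a separate argument there.
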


Nous notons que la preuve de ce théorème se fait en deux étapes : premièrement, on suppose que $G$ est quasi-déployé et l'on fait usage de certains quasi-systèmes de Chevalley de $G^{\text{psréd}}$ ; puis on utilise toute la théorie de Bruhat-Tits développée dans le cas quasi-réductif (voir le théorème ci-dessous) pour descendre les valuations le long de l'extension non ramifiée $K \rightarrow K^{\text{nr}}$. L'espace affine de ce théorème est l'appartement $\ah(G, S, K)$ associé au tore maximal déployé $S$ de $G$, il admet une structure de complexe simplicial engendrée par les hyperplans $\partial\alpha_{a,k}:=\{v \in V: a(v) +k=0\}$, $k \in \Gamma_a^{\varphi}$, qui ne dépendent pas du choix d'une origine $\varphi$, sur lequel les points rationnels $N(K)$ du normalisateur $N$ de $S$ opèrent. Pour définir le immeuble de Bruhat-Tits $\ih(G, K)$, il suffit de recoller tous les appartements $\ah(G, gSg^{-1}, K)$, en requérant que chaque $\psi=\varphi+v$ appartenant à $\ah(G, S, K)$ soit fixé par le sous-groupe parahorique $P_{\psi}$ engendré par $\Zh(\oh)$ et les $U_{a,-a(v)}:=\varphi_a^{-1}([-a(v), +\infty])$, où l'on note $\Zh$ l'unique $\oh$-modèle en groupes parahorique de $Z$.

Le prochain objectif est la construction des modèles en groupes entiers $\gh_{\Omega}$ de $G$ associés à une partie non vide et bornée $\Omega$ contenue dans un appartement de $\ih(G, K)$, dont la procédure consistera à définir auparavant les modèles correspondants $\Zh$ de $Z$, resp. $\uh_{a, \Omega}$ de $U_a$ pour toute racine non divisible $a \in \Phi_{\text{nd}}$, et ensuite à les recoller en $\gh_{\Omega}$ à l'aide de lois birationnelles. Observons qu'on aura besoin comme précédemment de traiter en priorité le cas quasi-déployé, pour en déduire le général par descente non ramifiée. En tout cas, les théorèmes de structure peuvent être énoncés comme suit :

\begin{thm}
Pour chaque partie non vide et bornée $\Omega$ d'un appartement de $\ih(G, K)$, il existe un plus grand modèle en groupes $\gh_{\Omega}$ lisse, affine et connexe de $G$ tel que $\gh_{\Omega}\otimes K=G$ et $\gh_{\Omega}(\oh^{\emph{nr}})$ fixe $\Omega$. Si $\Omega=\emph{\textbf{f}}$ est une facette, alors ses sous-facettes correspondent bijectivement aux $k$-sous-groupes paraboliques de $\gh_{\emph{\textbf{f}},s}:=\gh_{\emph{\textbf{f}}} \otimes k$ par l'application $\emph{\textbf{f'}} \succcurlyeq \emph{\textbf{f}} \mapsto Q_{\emph{\textbf{f'}}, \emph{\textbf{f}}}:=\emph{im}(\gh_{\emph{\textbf{f'}},s}\rightarrow \gh_{\emph{\textbf{f}},s})$. De plus, on a $P_{\emph{\textbf{f'}}}=Q_{\emph{\textbf{f'}}, \emph{\textbf{f}}}(k)\times_{\gh_{\emph{\textbf{f}},s}(k)}P_{\emph{\textbf{f}}} $.
\end{thm}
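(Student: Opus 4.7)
Pour prouver ce théorème, j'adopterais la stratégie classique de Bruhat-Tits, adaptée au cadre quasi-réductif : traiter d'abord le cas où $G$ est quasi-déployé, puis en déduire le cas général par descente non ramifiée le long de $K \to K^{\text{nr}}$. Dans la situation quasi-déployée, on dispose des valuations $\varphi_a$ et des filtrations $U_{a,k}$ fournies par le théorème précédent, et l'on construit $\gh_{\Omega}$ par recollement de $\oh$-modèles lisses des pièces radicielles et du Cartan, à l'aide d'une loi birationnelle stricte.

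Plus précisément, posons $f_{\Omega}(a) := \sup_{v \in \Omega}(-a(v))$ pour chaque racine non divisible $a \in \Phi_{\text{nd}}$. On construit d'abord un $\oh$-modèle lisse, affine, connexe $\uh_{a, \Omega}$ de $U_a$ caractérisé par $\uh_{a, \Omega}(\oh^{\text{nr}}) = \varphi_a^{-1}([f_{\Omega}(a), +\infty])$, dont l'existence repose sur une dilatation à partir d'un modèle lisse de $U_a$, guidée par les sauts de la filtration $\varphi_a$. Les descriptions explicites des sous-groupes radiciels fournies par \cite{CGP} et \cite{CP} dans les cas primitifs (réductif, exotique basique, $\text{BC}_n$) rendent ces modèles effectivement accessibles. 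De façon analogue, on définit le modèle $\Zh$ du Cartan $Z$, unique $\oh$-modèle lisse, affine, connexe dont les $\oh^{\text{nr}}$-points forment le sous-groupe borné maximal connexe de $Z(K^{\text{nr}})$. Le recollement s'effectue par application du critère de Bruhat-Tits sur les lois birationnelles strictes (\cite{BTII}, 3.2) : la grosse cellule $\Omega^{\pm} \simeq \prod_{a \in \Phi_{\text{nd}}^-}U_a \times Z \times \prod_{a \in \Phi_{\text{nd}}^+}U_a$ de $G$ s'étend en le $\oh$-modèle produit des $\uh_{a, \Omega}$ et $\Zh$, et les relations de commutation, contrôlées par les axiomes des valuations, garantissent que ce modèle engendre un $\oh$-schéma en groupes lisse, affine, connexe $\gh_{\Omega}$ de fibre générique $G$. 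La maximalité de $\gh_{\Omega}$ s'obtient en montrant que tout autre modèle lisse, affine, connexe $\gh'$ fixant $\Omega$ vérifie $\gh'(\oh^{\text{nr}}) \subset \gh_{\Omega}(\oh^{\text{nr}})$, via les décompositions d'Iwahori de $G(K^{\text{nr}})$, puis en utilisant la propriété de prolongement des morphismes entre schémas en groupes lisses.

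La descente non ramifiée exploite la fonctorialité de la construction et la lissité des modèles obtenus, alliées au fait que $\text{Gal}(K^{\text{nr}}/K)$ préserve $\Omega$ par construction de l'immeuble. Pour la correspondance facette--sous-groupe parabolique lorsque $\Omega = \textbf{f}$, on analyse la fibre spéciale $\gh_{\textbf{f}, s}$, elle-même $k$-groupe quasi-réductif dont le tore maximal déployé provient de $S$, le Cartan est $\Zh \otimes k$, et les sous-groupes radiciels sont les $\uh_{a, \textbf{f}, s}$ pour les racines affines $a$ s'annulant sur $\textbf{f}$. Pour toute sous-facette $\textbf{f}' \succcurlyeq \textbf{f}$, la maximalité fournit un morphisme canonique $\gh_{\textbf{f}'} \to \gh_{\textbf{f}}$ ; son image $Q_{\textbf{f}', \textbf{f}}$ sur fibres spéciales est engendrée par $\Zh \otimes k$ et par les $\uh_{a, \textbf{f}, s}$ pour les racines $a$ dont l'inégalité définissante est satisfaite strictement sur $\textbf{f}'$, c'est-à-dire les racines d'un demi-espace, d'où sa nature parabolique. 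La bijectivité $\textbf{f}' \mapsto Q_{\textbf{f}', \textbf{f}}$ et la formule du produit fibré $P_{\textbf{f}'} = Q_{\textbf{f}', \textbf{f}}(k) \times_{\gh_{\textbf{f}, s}(k)} P_{\textbf{f}}$ découlent alors des décompositions d'Iwahori--Bruhat et de la caractérisation de $P_{\textbf{f}'}$ comme stabilisateur ponctuel de $\textbf{f}'$.

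L'obstacle principal que je prévois réside dans la construction et l'étude des $\uh_{a, \Omega}$ pour les racines multipliables intervenant dans les groupes basiques $\text{BC}_n$, où $U_{(a)} = U_a \cdot U_{2a}$ est un groupe non commutatif provenant d'une restriction des scalaires à la Weil depuis une extension purement inséparable. La vérification des axiomes de la loi birationnelle, et en particulier le contrôle des commutateurs entre sous-groupes radiciels filtrés, y est notablement plus subtile que dans le cadre réductif standard, et c'est précisément dans ces cas que l'approche schématique se distingue nettement de l'approche combinatoire de \cite{Sol}.
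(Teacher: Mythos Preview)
Votre stratégie d'ensemble est essentiellement celle du papier : construction dans le cas quasi-déployé des modèles $\Zh$ (via modèle de Néron, cf. prop.~\ref{modele de neron}) et $\uh_{a,\Omega}$, vérification qu'ils forment une donnée radicielle schématique, recollement par loi birationnelle, puis descente non ramifiée ; la correspondance facette--parabolique se démontre ensuite en analysant les morphismes $\gh_{\textbf{f}',s}\to\gh_{\textbf{f},s}$ comme vous le décrivez.

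Il y a cependant deux points où votre esquisse s'écarte du papier et qui méritent correction. D'abord, vous invoquez \cite{BTII}, 3.2 pour le recollement birationnel ; or cette méthode repose sur des représentations linéaires fidèles construites via la théorie des plus hauts poids, indisponible pour les groupes quasi-réductifs. Le papier utilise à la place le théorème 6.6.1 de \cite{BLR}, qui fournit directement un schéma en groupes à partir d'une loi birationnelle stricte sur la grosse cellule $\uh^-_f\times\Zh\times\uh^+_f$ --- c'est un point technique mais essentiel. Ensuite, votre affirmation que la fibre spéciale $\gh_{\textbf{f},s}$ est \og elle-même $k$-groupe quasi-réductif \fg{} est fausse : elle possède typiquement un radical unipotent non trivial, que le papier calcule explicitement (lemme~\ref{lemme du radical unipotent}) comme l'image de $\uh^+_{f^*,s}\times R_u\Zh^0_s\times\uh^-_{f^*,s}$, où $f^*$ remplace $f(a)$ par son successeur dans $\Gamma'_a$ lorsque $f(a)+f(-a)=0$. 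Les \emph{sous-groupes radiciels} de $\gh_{\textbf{f},s}$ sont bien tous les $\uh_{a,\textbf{f},s}$ pour $a\in\Phi_{\text{nd}}$, et non seulement ceux des racines affines s'annulant sur $\textbf{f}$ ; c'est au niveau du \emph{quotient réductif} que seuls survivent les $a\in\Phi_{\textbf{f}}=\{a:f(a)+f(-a)=0\}$. Cette distinction est cruciale pour établir que $Q_{\textbf{f}',\textbf{f}}$ est parabolique : le papier compare $f_{\textbf{f}'}$ à $f_{\textbf{f}}$ et à $f_{\textbf{f}}^*$ racine par racine pour identifier $Q_{\textbf{f}_J,\textbf{f}}$ au parabolique standard de type $J\subseteq\Delta_{\textbf{f}}$.
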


Enfin on décrit le lien entre les modèles parahoriques $\gh$ et la projectivité d'un certain ind-schéma parfait, appelé la grassmannienne affine $\text{Gr}_{\gh}$ de $\gh$, qui paramètre les $\gh$-torseurs sur $\spec W_{\oh}(R)$ trivialisés au-dessus de $\spec W_{\oh}(R)\otimes K$. Il a été justement pour analyser ces espaces (voir l'article \cite{LouGr} qui étendre quelques résultats de \cite{PR} et/ou \cite{PZh}), que nous fûmes conduits à développer la théorie de Bruhat-Tits pour les groupes quasi-réductifs.

\begin{thm}
Soient $K$ un corps discrètement valué, complet et résiduellement parfait, $\oh$ son anneau d'entiers et $\gh$ un $\oh$-groupe lisse et affine. Pour que la grassmannienne affine $\emph{Gr}_{\gh}$ de $\gh$ soit ind-projective, il faut et il suffit que $\gh^0$ soit un modèle parahorique de sa fibre quasi-réductive.
\end{thm}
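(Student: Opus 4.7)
\medskip

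\noindent\emph{Esquisse de preuve.} On se ramène d'abord au cas où $\gh=\gh^0$ est connexe : le morphisme $\text{Gr}_{\gh^0}\to \text{Gr}_{\gh}$ est une immersion ouverte et $\text{Gr}_{\gh}$ est la réunion disjointe de ses translatés indexée par $\pi_0(\gh_s)(\kappa)$, de sorte que l'ind-projectivité des deux termes est équivalente. Pour la nécessité, on suppose $\text{Gr}_{\gh}$ ind-projective. Si $G:=\gh\otimes K$ admettait un sous-groupe distingué non trivial lisse, unipotent, connexe et déployé $U$, son adhérence schématique $\mathcal{U}$ dans $\gh$ (éventuellement après lissification à la Raynaud) fournirait un sous-ind-schéma fermé de $\text{Gr}_{\gh}$ contenant une copie de $\AAA^{\infty}$, ce qui contredirait l'ind-propreté ; donc $G$ est quasi-réductif. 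La propreté des orbites de $\gh(\oh^{\text{nr}})$ dans $\text{Gr}_{\gh, \overline{\kappa}}$ entraîne alors que ce groupe est borné dans $G(K^{\text{nr}})$, donc fixe un point $x\in \ih(G, K^{\text{nr}})$. Le théorème~2 donne un morphisme canonique $\gh\to \gh_{\textbf{f}}$ vers le modèle parahorique associé à la facette contenant $x$, isomorphisme en fibre générique, et la maximalité de $\gh_{\textbf{f}}$ combinée à la comparaison des $\gh_{\textbf{f}, s}$-orbites dans $\text{Gr}_{\gh_{\textbf{f}}}$ via la correspondance facettes/paraboliques du théorème~2 force ce morphisme à être un isomorphisme.

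La suffisance se démontre par descente fidèlement plate le long de $\oh\to \oh^{\text{nr}}$, ce qui permet de supposer $\kappa$ algébriquement clos. Le recollement birationnel de $\gh_{\textbf{f}}$ à partir de $\Zh$ et des $\uh_{a, \textbf{f}}$ pour $a\in \Phi_{\text{nd}}$, ainsi que la donnée radicielle valuée du théorème~1, permettent de décomposer $\text{Gr}_{\gh}$ en variétés de Schubert $\overline{S}_w$ indexées par le groupe de Weyl affine étendu, dont la projectivité se montre par récurrence sur la longueur, à l'aide de fibrations $\overline{S}_w\to \overline{S}_{w'}$ à fibres projectives du type des résolutions de Demazure-Bott-Samelson adaptées au cadre parfait. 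Ceci étend aux groupes quasi-réductifs les résultats classiques de Faltings, Pappas-Rapoport et Zhu dans le cas réductif.

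Le principal obstacle sera de traiter les facteurs pseudo-réductifs primitifs non réductifs (exotiques basiques et $\text{BC}_n$), car l'identification de $G(K)$ à $\overline{G}(K)$, resp. celle de $\text{BC}_n(K)$ à $\text{Sp}_{2n}(K^{1/2})$, nécessitant un changement de base radiciel $K\to K^{1/p}$, ne s'étend pas en un morphisme fidèlement plat au niveau de $\oh$. Il faudra donc construire les résolutions de Demazure et établir la projectivité des variétés de Schubert intrinsèquement sur $\oh$, en s'appuyant sur les lois birationnelles et l'unicité des modèles parahoriques démontrées en amont, plutôt que de chercher une comparaison globale avec un groupe déployé de référence.
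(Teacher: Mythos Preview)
Your sketch follows essentially the same route as the paper: reduction to $\kappa$ algebraically closed and $\gh$ connected, sufficiency via Demazure resolutions built from the affine BN-pair, and necessity by first forcing $G$ to be quasi-reductive (through a $\GG_a$ subquotient whose Grassmannian is infinite affine space) and then using boundedness of $\gh(\oh)$ together with the facet/parabolic correspondence to identify $\gh$ with a parahoric.

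Two points deserve correction. First, the map $\text{Gr}_{\gh^0}\to\text{Gr}_{\gh}$ is not an open immersion: both sides are quotients of $LG$, and the natural map is a finite \'etale covering with fiber $L^+\gh/L^+\gh^0\cong\pi_0(\gh_s)$, so $\text{Gr}_{\gh}$ is a \emph{quotient} of $\text{Gr}_{\gh^0}$, not a disjoint union of translates of it. The reduction to the connected case still goes through, but for this reason rather than the one you state. Second, your closing paragraph manufactures a difficulty that does not exist: the BN-pair and Demazure-resolution argument you outlined for sufficiency is already intrinsic to the parahoric model $\gh_{\textbf{f}}$ and uses only the combinatorics of $W_{\text{af}}$ and the projectivity of $L^+\gh^1_{\textbf{f}_i}/L^+\gh^1_{\textbf{a}}$, neither of which requires any comparison with a split or reductive cousin. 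The exotic and $\mathrm{BC}_n$ factors are handled uniformly by exactly the argument you already gave; there is nothing further to do.
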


On en tire aussitôt un corollaire concernant la projectivité générique de la grassmannienne affine globale de A. Beilinson et V. G. Drinfeld \cite{BDr}, qui permet de géométriser une vieille conjecture de \cite{BLR} sur les modèles de Néron globales.

\subsection*{Leitfaden}
Le \ref{section theorie de groupes} rappelle au lecteur les notions fondamentales de la théorie des groupes pseudo-réductifs et quasi-réductifs ainsi que certains concepts nouveaux concernant ceux-ci dont on aura besoin dans la suite. Ensuite le \ref{section combinatoire} introduit les données radicielles valuées, les appartements et les immeubles. Enfin, dans \ref{section geometrique} nous construisons les schémas en groupes attachés aux parties bornées contenues dans un appartement de l'immeuble, étudions leurs radicaux unipotents ainsi que le lien entre les groupes parahoriques et les sous-groupes paraboliques de la fibre spéciale. Le \ref{section grassmanniennes} donne un critère géométrique de parahoricité en termes des grassmanniennes affines.

\subsection*{Remerciements} Le présent travail est sans aucun doute immensément redevable à F. Bruhat et J. Tits, qui avaient déjà développé la machinerie nécessaire pour étudier les groupes quasi-réductifs sur un corps discrètement valué et hensélien. J'ai profité aussi de ma participation à l'école d'été \og Immeubles et grassmanniennes affines \fg{}, durant laquelle j'ai écrit et présenté une partie de mes travaux. D'ailleurs j'ai entrepris des conversations intéressantes sur ce thème avec B. Conrad, A. Ivanov, G. Prasad, T. Richarz, P. Scholze et M. Solleveld. Je suis très reconnaissant à M. Santos, P. Scholze et au rapporteur de leurs suggestions pour améliorer ce manuscrit. Cet article a été rédigé dans le cadre et avec le soutien du Sonderforschungsbereich/Transregio 45, ainsi que du Leibniz-Preis de la Deutsche Forschungsgemeinschaft attribué au Prof. Dr. Peter Scholze.

\subsection*{Notations} Sauf mentionné autrement, tous les corps à partir de \ref{donnees radicielles abstraites et valuees, appartements} seront supposés discrètement valués, henséliens, excellents (à savoir, $\hat{K}/K$ est une extension séparable) et résiduellement parfaits. On note $\Res_{B/A}$ le foncteur de restriction des scalaires le long d'une extension finie et plate d'anneaux noethériens $A \rightarrow B$. Les immeubles considérés seront presque toujours réduits et notés $\ih(G,K)$ ; leurs variantes élargies seront notées $\ih^1(G,K)$.

\section{Rappels sur la théorie des groupes}\label{section theorie de groupes}
Dans la suite on va rappeler et énoncer quelques propriétés qui concernent les groupes quasi-réductifs, en s'appuyant sur les travaux de B. Conrad, O. Gabber et G. Prasad, cf. \cite{CGP}, \cite{CP}.

\subsection{} \label{proprietes generales des groupes pseudo-reductifs}Commençons par rappeler au lecteur la notion du $K$-radical unipotent $R_{u, K} G$ d'un $K$-groupe affine, lisse et connexe $G$: c'est le plus grand sous-groupe lisse, connexe, unipotent et distingué de $G$ défini sur $K$. Le groupe $G$ est dit pseudo-réductif si $R_{u,K}G$ est trivial, dont  l'exemple prototypique est la restriction des scalaires à la Weil $G=\Res_{K'/K} G'$ d'un groupe réductif $G'$ le long d'une extension de corps $K'/K$ purement inséparable, $G$ n'étant réductif que lorsque $K'=K$ ; nous allons expliquer leur classification au prochain paragraphe. De manière analogue, on définit le radical unipotent déployé $R_{ud, K}G$ en exigeant de plus que le sous-groupe unipotent soit déployé, c'est-à-dire il possède une suite croissante de sous-groupes unipotents et connexes à pièces graduées consécutives isomorphes à $\GG_a$, ce qui ne se produit pas forcément au-dessus des corps imparfaits. Les groupes à radical unipotent déployé trivial sont appelés quasi-réductifs, cf. \cite{BTII}, 1.1.11., dont les unipotents parmi eux s'appellent ployés. On va tout d'abord éclaircir les théorèmes structuraux valides pour les groupes pseudo-réductifs et quasi-réductifs, dont la presque totalité avait été déjà annoncée par A. Borel et J. Tits en \cite{BoT2}.

Soient $K$ un corps et $G$ un $K$-groupe quasi-réductif. Les $K$-tores déployés maximaux de $G$ forment une seule classe de conjugaison sous $G(K)$ d'après le th. C.2.3 de \cite{CGP} et considérons un tel sous-tore $S$. Le système de racines $\Phi:=\Phi(G,S)$ est l'ensemble des poids de $S$ dans l'algèbre de Lie $\text{Lie} \,G$ pour la représentation adjointe, il porte la structure d'un système de racines dans le $\QQ$-espace vectoriel de $X_*(S) \otimes \QQ$ engendré par les racines et peut être même promu à une donnée radicielle $(X_*(S), \Phi^{\vee}, X^*(S), \Phi)$ (cf. th. C.2.15 de \cite{CGP}). D'autre part, on peut considérer aussi le système de racines absolu $\tilde{\Phi}=\Phi(G_{K^{\text{sép}}}, T_{K^{\text{sép}}})$, où $T$ désigne un tore maximal de $G$ contenant $S$, sur lequel le groupe galoisien $\Gamma:=\text{Gal}(K^{\text{sép}}/K)$ opère, de sorte que les racines $\Phi$ correspondent bijectivement aux $\Gamma$-orbites dans $\tilde{\Phi}$ dont la restriction à $S$ ne s'annule pas. Quant aux $K$-sous-groupes pseudo-paraboliques $P$ de $G$, ils peuvent être définis de façon dynamique, pour laquelle nous renvoyons à la déf. 2.2.1 de \cite{CGP}, ou de façon combinatoire à conjugaison près comme les seuls sous-groupes connexes et lisses contenant $S$ dont l'algèbre de Lie est la somme des espaces propres $\gf_0$ et $\gf_a$, où $a$ parcourt une partie parabolique de $\Phi$, cf. lem. C.2.16 de \cite{CGP}. Selon la prop. B.4.4 de \cite{CGP}, la seule opération permise d'un tore sur un groupe unipotent ployé est l'identité. Par suite, le radical unipotent $R_{u,K}G$ d'un groupe quasi-réductif $G$ est contenu dans $Z:=Z_G(S)$, d'où une identification canonique $\Phi(G, S)=\Phi(G^{\text{psréd}}, S^{\text{psréd}})$ et des isomorphismes $U_a \cong U^{\text{psréd}}_a$, où l'on note $G^{\text{psréd}}$ le quotient pseudo-réductif de $G$ etc. pour les restants groupes. On remarque que le centralisateur $Z$ de $S$ est quasi-réductif et qu'il en est de même des Cartans de $G$, lesquels sont de plus nilpotents (et même commutatifs dans le cas pseudo-réductif, voir la prop. 1.2.4 de \cite{CGP}).

\subsection{} \label{classification des groupes pseudo-reductifs}

On peut construire encore d'autres groupes pseudo-réductifs en modifiant un Cartan: soient $K'$ une $K$-algèbre finie et réduite, $G'$ un $K'$-schéma en groupes à fibres pseudo-réductives (c'est-à-dire, lorsqu'on écrit $K'=\prod_{i \in I} K_i'$ comme produit de corps, alors $G'$ s'écrit comme produit $\prod_{i \in I } G_i' $ avec $G_i'$ un groupe pseudo-réductif sur $K_i'$), $C'\subseteq G'$ un Cartan, $C$ un $K$-groupe pseudo-réductif commutatif muni d'une opération à gauche sur $\Res_{K'/K} G'$ qui laisse $\Res_{K'/K}C'$ stable point par point et d'une application $\Res_{K'/K}C' \rightarrow C$ équivariante pour l'action sur $\Res_{K'/K} G'$; alors le $K$-groupe lisse, affine et connexe $G:=(\Res_{K'/K} G' \rtimes C)/\Res_{K'/K}C'$ obtenu en plongeant $\Res_{K'/K} C$ antidiagonalement dans le produit semi-direct, est pseudo-réductif, cf. prop. 1.4.3 de \cite{CGP}. Nous disons que le quadruple $(K'/K, G', C', C)$ est une présentation du couple $(G, C)$ (ou parfois, par abus de langage, juste de $G$, lorsque le Cartan $C$ est sous-entendu). Le théorème de classification des groupes pseudo-réductifs affirme l'existence des présentations \og primitives \fg{} dans presque tous les cas :

\begin{thm}[B. Conrad, O. Gabber et G. Prasad]\label{presentation de groupes pseudo-reductifs}
Soient $K$ un corps tel que $[K:K^2]\leq 2$ lorsque sa caractéristique est égale à $2$, $G$ un groupe pseudo-réductif sur $K$ et $C$ un Cartan de $G$. Alors il y a une et une seule présentation $(K'/K, G', C', C)$ de $(G,C)$ à isomorphisme unique près, dont les fibres de $G'$ sont primitives. De plus, la $K$-algèbre $K'$ et le $K'$-groupe $G'$ ne dépendent que de $G$ à isomorphisme unique près. 
\end{thm}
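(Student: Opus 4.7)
Le plan consiste à reconstruire canoniquement le couple $(K'/K, G')$ à partir de la géométrie absolue de $G$, puis à établir l'unicité de toute présentation primitive. On sépare existence et unicité.

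Pour l'existence, on passerait d'abord à une clôture séparable $K^{\text{sép}}$ et on considèrerait le système de racines absolu $\tilde{\Phi}=\Phi(G_{K^{\text{sép}}}, T_{K^{\text{sép}}})$ muni de l'action du groupe galoisien $\Gamma=\text{Gal}(K^{\text{sép}}/K)$ (où $T$ est un tore maximal de $G$). Les $\Gamma$-orbites des composantes irréductibles de $\tilde{\Phi}$ fournissent la décomposition cherchée $K'=\prod_{i \in I} K_i'$, où $K_i'$ est le corps fixe dans $K^{\text{sép}}$ du stabilisateur d'une composante de l'orbite indexée par $i$. Sur chaque $(K_i')^{\text{sép}}$, le facteur primitif $G_i'$ se reconstruit à partir du système de racines irréductible de son orbite, en exploitant la trichotomie des groupes pseudo-réductifs primitifs rappelée en \ref{proprietes generales des groupes pseudo-reductifs} : dans le cas réductif on prend le groupe semi-simple simplement connexe absolument presque simple associé ; dans les cas exotiques basiques (caractéristiques $2$ et $3$) et $\text{BC}_n$ (caractéristique $2$), on utilise les constructions ad hoc décrites par J. Tits en \cite{TitsBour} et \cite{TitsOber} et reprises systématiquement en \cite{CGP}, qui reposent sur certaines isogénies très inséparables entre groupes déployés. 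Ces $G_i'$ descendent à $K_i'$ par un argument standard de cocycle galoisien, et on les assemble avec le Cartan $C$ pour récupérer la formule $G \cong (\Res_{K'/K} G' \rtimes C)/\Res_{K'/K} C'$.

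Pour l'unicité, soient $(K'/K, G', C', C)$ et $(K''/K, G'', C'', C)$ deux présentations primitives du même couple $(G, C)$. Le Cartan $C$ étant pseudo-réductif commutatif par la prop. 1.2.4 de \cite{CGP}, il possède un unique $K$-tore maximal $T$ qui est un tore maximal de $G$, d'où une identification canonique entre les $\Gamma$-orbites irréductibles de $\tilde{\Phi}=\Phi(G_{K^{\text{sép}}}, T_{K^{\text{sép}}})$ et les facteurs primitifs dans chacune des deux présentations. On en déduit un isomorphisme unique de $K$-algèbres $K'\cong K''$ respectant la décomposition en facteurs, puis un isomorphisme unique $G'\cong G''$ au-dessus par unicité des groupes primitifs associés à un type de racine fixé sur un corps fixé. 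La compatibilité avec les applications $\Res_{K'/K} C' \rightarrow C$ et $\Res_{K''/K} C'' \rightarrow C$ s'en déduit en utilisant que ces flèches sont déterminées par leur action sur $T$.

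L'obstacle principal consistera à établir l'unicité du groupe primitif de type donné au-dessus d'un corps fixé, et en particulier à traiter les facteurs exotiques et $\text{BC}_n$ aux caractéristiques $2$ et $3$, où la construction fait intervenir un sous-corps intermédiaire $K_i' \subseteq F_i \subseteq (K_i')^{1/p}$ indissociable des isogénies exceptionnelles. C'est précisément ici qu'intervient l'hypothèse $[K:K^2]\leq 2$ en caractéristique $2$, car elle force $F_i$ à être l'un des deux seuls sous-corps possibles et détermine donc le facteur primitif à isomorphisme unique près. Le travail complémentaire \cite{CP} de B. Conrad et G. Prasad étend cette classification au cas $[K:K^2]>2$, au prix d'un paramètre combinatoire supplémentaire qui brise le caractère canonique observé ici.
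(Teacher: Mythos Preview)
Ce théorème n'est pas démontré dans l'article : il y est simplement énoncé comme résultat de \cite{CGP}, et le texte qui suit se contente d'expliquer le sens du mot \og primitif \fg{} puis de décrire les constructions exotiques et $\text{BC}_n$. Il n'y a donc pas de preuve interne à comparer.

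Ta proposition contient néanmoins une erreur conceptuelle sérieuse qui la rend incorrecte en tant qu'esquisse. Tu définis $K_i'$ comme \og le corps fixe dans $K^{\text{sép}}$ du stabilisateur d'une composante de l'orbite indexée par $i$ \fg{}, donc comme une sous-extension \emph{séparable} de $K$. Or les $K_i'$ de la classification sont typiquement des extensions \emph{purement inséparables} d'un tel corps, et c'est même là tout l'enjeu de la théorie pseudo-réductive. L'exemple prototypique $G=\Res_{K^{1/p}/K}\text{SL}_n$ le montre immédiatement : ce groupe est pseudo-déployé, son système de racines absolu $\tilde{\Phi}$ est celui de $\text{SL}_n$ avec action galoisienne triviale, donc ta recette donne $K_i'=K$ pour l'unique facteur, alors que la présentation primitive est $(K^{1/p}/K,\text{SL}_n,\dots)$. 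Le système de racines absolu ne voit pas du tout le corps $K^{1/p}$.

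La donnée manquante est le \emph{corps de définition du radical unipotent géométrique} : pour chaque facteur pseudo-simple, $K_i'$ est le plus petit sous-corps de $K^{\text{alg}}$ sur lequel $R_{u,K^{\text{alg}}}G_i$ est défini, et $G_i'$ est alors le quotient réductif (ou le modèle primitif approprié) sur ce corps. C'est un invariant profond qui n'est pas lisible sur la combinatoire galoisienne du système de racines, et c'est pourquoi la preuve dans \cite{CGP} occupe plusieurs chapitres. Ton esquisse d'unicité souffre du même défaut : l'identification des facteurs via les $\Gamma$-orbites de composantes irréductibles de $\tilde{\Phi}$ ne suffit pas à déterminer $K_i'$, donc l'argument \og on en déduit un isomorphisme unique de $K$-algèbres $K'\cong K''$ \fg{} est circulaire.
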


Expliquons le sens du mot \og primitif \fg{} appliqué dans l'énoncé. Cela va dénommer une classe de certains groupes qui sont minimaux par rapport aux présentations ci-dessus, et qui sont divisés en trois volets : le volet réductif, formé des groupes semi-simples, simplement connexes et absolument presque simples (et ceci est le seul volet non vide lorsque $p\geq 5$) ; l'exotique, formé des groupes pseudo-réductifs pseudo-simples exotiques basiques, qui n'existent qu'en caractéristique $p=2$ ou $3$, lorsque le diagramme de Dynkin possède une $p$-arête ; le non réduit, formé des groupes pseudo-réductifs, pseudo-simples et pseudo-déployés à système de racines non réduit, qui n'existent qu'en caractéristique $p=2$. Notons encore que l'hypothèse $[K:K^2]\leq 2$ dès que $p=2$ qu'on a faite ne peut même pas être soulevée, car sinon les groupes pseudo-réductifs deviennent typiquement n'importe quoi, voir \cite{CP}. Ceci nous conduit à faire dorénavant l'hypothèse suivante, à laquelle les corps valués considérés dans la suite satisferont automatiquement : on a $[K:K^p] \leq p$.

Décrivons alors les bizarreries propres aux caractéristiques petites, en commençant par ce qu'on appelle les groupes pseudo-réductifs exotiques basiques (pseudo-déployés), dont la construction repose sur l'existence de diagrammes de Dynkin possédant une arête étiquetée avec $p$, alors de type $B_n$, $C_n$, $n \geq 2$ ou $F_4$ si $p=2$ ou $G_2$ si $p=3$. Alors, un groupe pseudo-réductif $G$ sur un corps imparfait $K$ en caractéristique $p=2$ ou $3$ est dénommé exotique basique si le radical unipotent $R_{u,K^{\text{alg}}} G$ est défini sur $K^{1/p} \neq K$, le quotient réductif $G':=G^{\text{réd}}_{K^{1/p}}$ défini au-dessus de $K^{1/p}$ est semi-simple, absolument presque simple et simplement connexe, dont le diagramme de Dynkin admet une arête avec étiquette $p$, et l'application correspondante $G \rightarrow \Res_{K^{1/p}/K} G'$ est une immersion fermée identifiant $G$ à l'image réciproque d'un Levi $\bar{G}$ de $\Res_{K^{1/p}/K}\bar{G}'$ par la restriction des scalaires de l'isogénie très spéciale $\Res_{K^{1/p}/K} \pi: \Res_{K^{1/p}/K}G'\rightarrow \Res_{K^{1/p}/K}\bar{G}'$ (voir la construction de \cite{CGP}, 7.1). Réciproquement, pour qu'une telle donnée $(G', \bar{G})$, formée d'un $K^{1/p}$-groupe réductif $G'$ et d'un $K$-groupe réductif $\bar{G}$ soumis aux conditions antérieures, produise un groupe pseudo-réductif en prenant l'image réciproque $G=(\Res_{K^{1/p}/K}\pi)^{-1}(\bar{G})$, il faut et il suffit que cette image soit lisse ou que $\bar{G}$ soit contenu dans l'image de $\Res_{K^{1/p}/K}\pi$, ce qui est typiquement faux, cf. prop. 7.3.1 de \cite{CGP}.

Exemplifions notamment la construction des groupes pseudo-réductifs exotiques basiques dans le cas pseudo-déployé, qui jouera un rôle plus important dans la construction des immeubles. Supposons que $G':=H$ est un $\FF_p$-groupe épinglé, presque simple et simplement connexe, ayant une $p$-arête dans son diagramme de Dynkin. Alors on peut regarder l'isogénie très spéciale $\pi$ comme étant définie au-dessus de $\FF_p$ et il s'avère que $\bar{G}':=\bar{H}$ est un $\FF_p$-groupe épinglé, presque simple et simplement connexe à diagramme de Dynkin dual, de telle sorte que l'on ait $T_{\bar{H}}=\pi(T_H)$, $N_{\bar{H}}=\pi(N_H)$, $W_H \xrightarrow{\simeq}W_{\bar{H}}$ et que l'endomorphisme $\GG_a \xrightarrow{x_a} U_{a, H} \xrightarrow{\pi} U_{\bar{a}, \bar{H}} \xrightarrow{x_{\bar{a}}^{-1}} \GG_a$ soit de la forme $t \mapsto t$ (resp. $t \mapsto t^p$) lorsque $a$ est longue (resp. courte), où $\bar{\Phi}$ s'identifie au dual de $\Phi$ par la réciproque de la bijection $a \in \Phi \mapsto \bar{a} \in \bar{\Phi}$. Il est aisé de voir que l'image réciproque $G$ du Levi $\bar{H}$ de $\Res_{K^{1/p}/K} \bar{H}$ par $\Res_{K^{1/p}/K}\pi$ satisfait aux conditions ci-dessus, donc il s'agit d'un groupe pseudo-réductif exotique basique pseudo-déployé à système de racines $\Phi$, cf. th. 7.2.3 de \cite{CGP}. Observons que la grosse cellule de $G$ peut être donnée par les formules suivantes : $Z=\prod_{a \in \Delta} \Res_{K_a/K} a^{\vee}(\GG_m)$ et $U_a=\Res_{K_a/K} \GG_a$ dans le groupe pseudo-épinglé $\Res_{K^{1/p}/K} H$, où $K_a=K$ si $a$ est longue (resp. $K_a=K^{1/p}$ sinon).

Si $p=2$, il existe un autre phénomène largement inusité, celui des groupes pseudo-réductifs dont le système de racines absolu n'est pas réduit. Pour les construire, les auteurs de \cite{CGP} considèrent le groupe $G'=\text{Sp}_{2n}$ canoniquement épinglé à système de racines de type $C_n$ et commencent par observer que $\Phi:=\Phi' \cup (\frac{1}{2}\Phi' \cap X^*(T')) $ est un système de racines non réduit de type $BC_n$. À chaque racine ni divisible ni multipliable $a$ (resp. multipliable $c$, resp. divisible $2c$) dans $\Phi$, on lui associe le sous-groupe radiciel $U_a=\Res_{K^{1/2}/K}\GG_a$ (resp. $U_c=\Res_{K^{1/2}/K}\GG_a \times \GG_a$ muni de la loi de groupe additive naturelle, resp. $U_{2c}= 1 \times \GG_a \subseteq U_{c}$) ; on munit ensuite ces sous-groupes des applications vers les $\Res_{K^{1/2}/K}U_a'$ (resp. $\Res_{K^{1/2}/K} U_{2c}'$) définies par les formules $x \mapsto x$ (resp. $(x,y)\mapsto \alpha x^2+y$, où $\alpha \in K^{1/2}\setminus K$). \footnote{Signalons que $U_c\rightarrow \Res_{K^{1/2}/K} U_{2c}'$ ne dépend pas du choix de $\alpha$, puisque, lorsqu'on se donne $\beta=\alpha r^2+c $ avec $r \in K^{1/2}$, $c\in K$, alors on obtient un et un seul isomorphisme $U_{c,\beta} \rightarrow U_{c,\alpha}$, $(x,y) \mapsto (rx,y+cx^2)$ qui préserve les applications vers $\Res_{K^{1/2}/K} U_{2c}'$.} Or le th. 9.6.14. de \cite{CGP} fournit une loi de groupe sur le $K$-schéma $U^+=\prod_{a \in \Phi^+}U_a$, qui est indépendante de l'ordre mis sur les facteurs du produit à un sens convenable et aussi compatible avec la projection sur $\Res_{K^{1/2}/K}U'^{+}$ ; de plus, si l'on note $Z$ le groupe pseudo-réductif commutatif $\Res_{K^{1/2}/K}T'$, alors il opère sur $U^+$ et le schéma  $U^+ \times w \times ZU^+$ admet une et une seule loi birationnelle stricte recouvrant celle de la cellule ouverte de Bruhat de $\Res_{K^{1/2}/K} G'$. Le $K$-groupe $G:=\text{BC}_n$ échéant est lisse, affine, connexe et même pseudo-réductif muni d'un morphisme canonique $\pi: G \rightarrow \Res_{K^{1/2}/K} G'$, dont le noyau est contenu dans le produit des sous-groupes radiciels multipliables. De plus, son quotient réductif géométrique est défini sur $K^{1/2}$ et s'identifie à $G'=\text{Sp}_{2n}$ par l'application déduite de $\pi$ par adjonction. Ce groupe pseudo-réductif ne dépend que de $n$, ses $K$-automorphismes sont tous intérieures et il ne possède pas des $K$-formes non triviales. Les groupes isomorphes à $\text{BC}_n$ sont dénommés absolument non réduits basiques dans \cite{CGP} (ne faire pas confusion avec le sens du mot \og réduit \fg{} en géométrie algébrique !)

Rappelons un petit peu de la structure générale des groupes pseudo-réductifs qui fut développée dans \cite{CP}. Une extension centrale de groupes pseudo-réductifs dérivés est dite modérée si le noyau (qui est souvent non lisse) ne contient aucun sous-groupe unipotent et on peut montrer (voir th. 5.1.4 de \cite{CP}) qu'il existe une telle extension centrale modérée universelle $\widetilde{G}$ pour chaque groupe $G=G^{\text{dér}}$, laquelle s'identifie canoniquement au groupe $\Res_{K'/K}G'$ du th. \ref{presentation de groupes pseudo-reductifs}. En général, on notera encore $\widetilde{G}$ le revêtement modérément universel du groupe dérivé $G^{\text{dér}}$, qui doit être regardé comme l'équivalent pseudo-réductif du revêtement simplement connexe d'un groupe réductif. Finalement, d'après le th. 2.4.1 de \cite{CGP}, on dispose d'un groupe affine de type fini $\text{Aut}_{G,C}$ classifiant les automorphismes de $G$ dont la restriction à un Cartan donné $C$ est triviale et l'on note $C^{\text{aut}}$ son sous-groupe lisse maximal - le modifié $G^{\text{aut}}:=(G\rtimes C^{\text{aut}})/C$ jouera souvent un rôle semblable à celui du groupe adjoint pour les groupes réductifs (voir la suite et le th. C.2.15 de \cite{CP}).

\subsection{} \label{quasi-deploye}

Rappelons qu'un groupe quasi-réductif $G$ sur un corps $K$ s'appelle quasi-déployé si l'un de (équiv. tous) ses sous-groupes pseudo-boréliens est défini sur $K$. Ceci revient au même que demander que le centralisateur $Z$ d'un tore déployé maximal soit nilpotent.

\begin{propn}
Soient $K$ un corps de caractéristique $p$ tel que $[K:K^p]\leq p$, $G$ un $K$-groupe quasi-réductif et $\widetilde{G}=\Rese_{K'/K}G'$ le revêtement modérément universel du quotient pseudo-réductif $G^{\emph{psréd}}$, où les fibres de $G'$ sont pseudo-réductives primitives. Pour que $G$ soit quasi-déployé il faut et il suffit que les fibres réductives (resp. exotiques) de $G'$ soient quasi-déployées (pseudo-déployées).
\end{propn}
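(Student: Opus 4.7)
Le plan est de réduire, par étapes successives, l'énoncé à l'analyse des types primitifs. Puisque $R_{u,K}G \subseteq Z_G(S)$ d'après le \ref{proprietes generales des groupes pseudo-reductifs}, la projection $Z_G(S) \twoheadrightarrow Z_{G^{\text{psréd}}}(S^{\text{psréd}})$ est surjective à noyau unipotent, donc la nilpotence s'y transmet librement et \emph{$G$ quasi-déployé} équivaut à \emph{$G^{\text{psréd}}$ quasi-déployé}. Pour $H$ pseudo-réductif, $H$ est ensuite quasi-déployé si et seulement si $H^{\text{dér}}$ l'est (par correspondance des sous-groupes pseudo-boréliens via adjonction d'un Cartan ou intersection avec le dérivé) ; et l'extension centrale modérée universelle $\widetilde{H^{\text{dér}}} \to H^{\text{dér}}$, ayant un noyau commutatif pseudo-réductif sans sous-groupe unipotent et donc contenu dans tout Cartan, induit une bijection sur les sous-groupes pseudo-boréliens. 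Le problème équivaut ainsi à ce que $\Res_{K'/K} G' = \widetilde{(G^{\text{psréd}})^{\text{dér}}}$ soit quasi-déployé.

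La restriction des scalaires commutant avec les centralisateurs de sous-groupes définis sur $K$ et avec le passage aux tores maximaux déployés (les facteurs purement inséparables de $K'/K$ donnant l'identification directe, et les facteurs séparables étant traités par descente galoisienne), on déduit que $\Res_{K'/K} G'$ est quasi-déployé si et seulement si chaque fibre $G'_i$ l'est sur $K'_i$. Il reste alors à examiner individuellement chacun des trois types primitifs. Dans le cas réductif, on retombe sur la définition usuelle de quasi-déployé. Dans le cas $\text{BC}_n$, le \ref{classification des groupes pseudo-reductifs} garantit l'absence de formes non triviales, donc $G'_i$ est pseudo-déployé, a fortiori quasi-déployé.

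Le cas exotique basique concentre les difficultés. Ici, $G'_i$ s'identifie à l'image réciproque d'un Levi $\bar{G}$ de $\Res_{(K'_i)^{1/p}/K'_i}\bar{G}'$ par $\Res_{(K'_i)^{1/p}/K'_i}\pi$, d'où une surjection $G'_i \twoheadrightarrow \bar{G}$ à noyau unipotent ployé. Comme aucun tore déployé n'opère non trivialement sur un groupe unipotent ployé (\ref{proprietes generales des groupes pseudo-reductifs}), ce noyau est contenu dans $Z_{G'_i}(S)$, lequel est nilpotent si et seulement si $Z_{\bar{G}}(\bar{S})$ l'est, d'où l'équivalence \emph{$G'_i$ quasi-déployé $\Leftrightarrow$ $\bar{G}$ quasi-déployé}. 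Or les types de Dynkin possibles pour $\bar{G}$ sont $B_n$, $C_n$, $F_4$ ($p=2$) et $G_2$ ($p=3$), qui n'ont aucun automorphisme extérieur non trivial, d'où \emph{$\bar{G}$ quasi-déployé $\Leftrightarrow$ $\bar{G}$ déployé} ; la description explicite de l'isogénie très spéciale au \ref{classification des groupes pseudo-reductifs} fournit enfin l'équivalence avec le pseudo-déploiement de $G'_i$. L'obstacle principal se situe dans cette dernière étape, qui combine l'analyse soigneuse du morphisme $G'_i \twoheadrightarrow \bar{G}$ à noyau unipotent ployé et l'invocation cruciale de la trivialité des automorphismes extérieurs des diagrammes de Dynkin concernés pour remplacer \og quasi-déployé \fg{} par \og pseudo-déployé \fg{}.
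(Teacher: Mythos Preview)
Votre stratégie de réduction coïncide avec celle de l'article : passer au quotient pseudo-réductif, puis au revêtement modérément universel $\Res_{K'/K}G'$, puis examiner chaque fibre primitive ; le traitement des fibres réductives et $\text{BC}_n$ est correct. Deux imprécisions subsistent néanmoins.

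La première est bénigne : vous qualifiez le noyau de l'extension centrale modérée $\widetilde{H^{\text{dér}}} \to H^{\text{dér}}$ de \og pseudo-réductif \fg{}, alors que le texte rappelle au \ref{classification des groupes pseudo-reductifs} qu'il est \og souvent non lisse \fg{}. Ce noyau est toutefois de type multiplicatif et central, ce qui suffit pour votre bijection de pseudo-boréliens.

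La seconde est une vraie lacune. Dans le cas exotique basique, vous affirmez que le noyau $N$ de $f:G'_i \twoheadrightarrow \bar{G}$ est \emph{unipotent ployé}, donc centralisé par $S$. Or $f$ est la restriction de $\Res_{(K'_i)^{1/p}/K'_i}\pi$ au préimage de $\bar{G}$, si bien que $N=\Res_{(K'_i)^{1/p}/K'_i}(\ker\pi)$ avec $\ker\pi$ infinitésimal de hauteur~$1$ ; la restriction de Weil d'un tel groupe le long d'une extension purement inséparable de degré $p$ est un groupe unipotent \emph{non lisse} (par exemple $\Res_{K^{1/p}/K}\alpha_p$ est l'hypersurface $\sum x_i^p t^i=0$, de jacobien nul). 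Pire, $N$ rencontre non trivialement les sous-groupes radiciels $U_a$ associés aux racines courtes (puisque $\pi$ y induit $t\mapsto t^p$), donc $N \not\subseteq Z_{G'_i}(S)$ et votre transfert de nilpotence s'écroule. L'article contourne cette difficulté en montrant directement que $f(Z_{G'}(S'))=Z_{\bar{G}}(f(S'))$ est un Cartan, puis conclut au pseudo-déploiement par le cor.~7.3.4 de \cite{CGP} une fois $\bar{G}$ déployé ; vous pouvez aussi invoquer la bijection entre sous-groupes pseudo-paraboliques de $G'_i$ et paraboliques de $\bar{G}$ fournie par la théorie de \cite{CGP}, \S7.3.
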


\begin{proof}
Observons que $G$ est quasi-déployé si et seulement si son quotient pseudo-réductif $G^{\text{psréd}}$ l'est, car la commutativité de $Z^{\text{psréd}}$ est nécessaire et suffisante pour que $Z$ soit résoluble. On voit aussitôt que, lorsque $G$ est pseudo-réductif, $Z$ est commutatif si et seulement si $\widetilde{Z}$ l'est. Maintenant si $G=\widetilde{G}=\Res_{K'/K}G'$ est modérément universel et $G'$ primitif, on voit que l'application, qui à un pseudo-parabolique $P'$ de $G'$ fait correspondre $P=\Res_{K'/K} P'$, définit une bijection entre les ensembles de sous-groupes pseudo-paraboliques de $G$ et de $G'$, qui préserve les sous-groupes pseudo-boréliens parmi eux.

Finalement il faut voir que, si $K'=K$ et $G=G'$ est exotique basique et quasi-déployé, alors $G$ est pseudo-déployé. Pour cela, on considère l'épimorphisme $f: G' \rightarrow \bar{G}$ vers le cousin déployé, lequel à son tour est quasi-déployé, en observant que $f(C')=f(Z_{G'}(S'))=Z_{\bar{G}'}(f(S'))$ est un Cartan. Mais voilà que les diagrammes de Dynkin non simplement lacés ne possèdent aucun automorphisme extérieur, d'où le fait que $\bar{G}$ soit déployé et même que $G$ soit pseudo-déployé, en vertu du cor. 7.3.4 de \cite{CGP}.
\end{proof}

\begin{cor}\label{pseudo-steinberg}
Gardons l'hypothèse faite sur $K$ et supposons que sa dimension cohomologique est inférieure ou égale à $1$. Alors tout $K$-groupe quasi-réductif $G$ est quasi-déployé.
\end{cor}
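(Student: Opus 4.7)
Le plan est d'invoquer la proposition précédente pour ramener l'énoncé à des affirmations sur les facteurs primitifs du revêtement modérément universel $\widetilde{G^{\text{psréd}}}=\Res_{K'/K}G'$. Plus précisément, il suffira de montrer que chaque fibre réductive $G_i'$ de $G'$ sur $K_i'$ est quasi-déployée, et que chaque fibre exotique basique $G_j'$ est pseudo-déployée. Les éventuelles fibres non réduites basiques $\text{BC}_n$ sont automatiquement pseudo-déployées en raison de l'absence de $K_j'$-formes non triviales rappelée en \ref{classification des groupes pseudo-reductifs}.

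La première étape sera un bref transfert de l'hypothèse cohomologique : comme chaque $K_i'$ est une $K$-algèbre locale finie purement inséparable, le morphisme canonique $\text{Gal}(K^{\text{sép}}/K) \rightarrow \text{Gal}(K_i'^{\text{sép}}/K_i')$ est un isomorphisme, d'où $\text{cd}(K_i')=\text{cd}(K) \leq 1$. Ceci est un résultat classique, cf. Serre, \emph{Cohomologie galoisienne}.

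Pour les fibres réductives, chacune étant semi-simple, simplement connexe et absolument presque simple sur $K_i'$, le théorème classique de R. Steinberg garantit directement qu'elle est quasi-déployée. Pour les fibres exotiques basiques (qui n'existent qu'en caractéristique $p\in \{2,3\}$), je ferais appel au cousin réductif $\bar{G}_j'$ de $G_j'$ : c'est encore un $K_j'$-groupe semi-simple, simplement connexe et absolument presque simple, et son diagramme de Dynkin est non simplement lacé (il possède une $p$-arête). Steinberg fournit alors un $\bar{G}_j'$ quasi-déployé, mais comme un tel diagramme ne possède aucun automorphisme extérieur, $\bar{G}_j'$ est en fait déployé. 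Le cor. 7.3.4 de \cite{CGP}, dont l'emploi a déjà été illustré dans la démonstration de la proposition précédente, permet alors de conclure que $G_j'$ est pseudo-déployé.

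L'obstacle principal sera donc la validité du théorème de Steinberg dans ce cadre ; une fois celui-ci acquis, les autres ingrédients (invariance de la dimension cohomologique par extension radicielle, combinatoire élémentaire des automorphismes des diagrammes non simplement lacés, et proposition précédente) s'enchaînent sans difficulté.
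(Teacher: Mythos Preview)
Your approach is essentially that of the paper: reduce via the preceding proposition to the primitive fibres of $G'$, invoke Steinberg for the simply connected reductive ones, and handle the exotic basic ones through their split cousin $\bar{G}'$ (the no-outer-automorphism and cor.~7.3.4 arguments you give are precisely those already used inside the proof of the proposition).

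One correction: the factor fields $K_i'$ of $K'$ are \emph{not} in general purely inseparable over $K$; they are arbitrary finite extensions (think of $G=\Res_{L/K}\,\text{SL}_n$ with $L/K$ finite separable). Your transfer of the cohomological hypothesis therefore needs a different justification, but the conclusion $\text{cd}(K_i')\leq \text{cd}(K)\leq 1$ remains valid by the standard fact (Serre, \emph{Cohomologie galoisienne}, I~\S3.3) that cohomological dimension does not increase under any algebraic extension. With this fix, the argument goes through.
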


\begin{proof}
Grâce à la proposition précédente, il suffit de le vérifier pour chaque groupe simplement connexe ou exotique basique $G$, auquel cas cela résulte du théorème de Steinberg appliqué à $G$ si celui-ci est réductif ou à $\bar{G}$ sinon.
\end{proof}

\subsection{} \label{quasi-epinglages}

Maintenant on va rappeler les notions de quasi-épinglages et quasi-systèmes de Chevalley pour les groupes réductifs quasi-déployés et puis on les généralisera au cas pseudo-réductif et quasi-déployé, sous l'hypothèse $[K:K^p]\leq p$ lorsque $p$ est positif. Alors, durant ce paragraphe, on fixera un corps $K$ de caractéristique $p\geq 0$ tel que $[K:K^p]\leq p$ lorsque $p>0$, un $K$-groupe pseudo-réductif et quasi-déployé $G$, un sous-tore déployé maximal $S$ de $G$ et une base $\Delta$ du système de racines $\Phi$ de $G$ suivant $S$, qui correspond à un pseudo-Borel $B$ de $G$. On note $T$ le plus grand sous-tore de $G$ contenant $S$ (unique, vu que $G$ est quasi-déployé), $\tilde{\Phi}$ le système de racines absolu $\tilde{\Phi}$ de $G$ suivant $T$ et $\tilde{\Delta}$ la base de $\tilde{\Phi}$ associée à $B$. 

Si $G$ est un $K$-groupe réductif déployé, alors un épinglage de $G$ correspond classiquement à la donnée d'isomorphismes $x_a: \GG_a \rightarrow U_a$ pour chaque racine simple $a \in \Delta$. Il est souvent utile d'étendre un épinglage en un système de Chevalley, cf. \cite{BTII} et \cite{SGA3}, c'est-à-dire la donnée d'isomorphismes $x_a: \GG_a \rightarrow U_a$ pour toute racine $a\in \Phi$ de sorte que $m_a=x_a(1)x_{-a}(1)x_a(1)$ normalise le tore déployé maximal donné $S$ et que les applications $\text{int}(m_a)\circ x_b$ et $x_{s_a(b)}$ de $\GG_a$ dans $U_{s_a(b)}$ sont égales au signe près. Observons qu'un tel prolongement est déterminé au signe près par l'épinglage choisi et son existence résulte de la prop. 6.2 de \cite{SGA3}. D'ailleurs, la première condition concernant $m_a$ peut s'exprimer en affirmant l'existence d'un  épimorphisme $\text{SL}_2 \rightarrow G^a$ tel que la restriction au sous-groupe canoniquement épinglé des matrices triangulaires supérieures (resp. inférieures) unipotentes coïncide avec $x_a$ (resp. $x_{-a}$ affecté du signe $-1$), où $G^a$ désigne le sous-groupe engendré par $U_{\pm a}$. 

Lorsque $G$ n'est plus déployé, mais bien quasi-déployé sur $K$, l'œuvre \cite{BTII} s'appuie sur un épinglage de Steinberg ou système de Chevalley-Steinberg, cf. 4.1.3. de \cite{BTII}, qui sont une sorte de donnée de descente des mêmes objets dans le cas déployé par rapport au groupe de Galois absolu, pour décrire les sous-groupes radiciels relatifs de $G$. Pour être précis, soit $\tilde{K}/K$ une extension galoisienne déployant $G$ et considérons un épinglage (resp. système de Chevalley) de $G\otimes \tilde{K}$ telle que $\gamma \circ x_{\tilde{a}} \circ \gamma^{-1}=x_{\gamma(\tilde{a})}$ (resp. la même condition affectée d'un signe $\pm 1$ lorsque $\tilde{a} \in \tilde{\Phi}$ s'écrit comme la somme de deux racines conjuguées). Étant donné un épinglage de Steinberg, on sait déjà comment le prolonger en un système de Chevalley, mais il faut encore vérifier la formule $\gamma \circ x_{\tilde{a}} \circ \gamma^{-1}=x_{\gamma(\tilde{a})}$ pour les racines $a \in \Phi$ dont la restriction à $S$ n'est pas divisible. Cela découle plus ou moins explicitement des travaux de R. Steinberg \cite{St1} et \cite{St2}, dont un traitement détaillé et moderne se trouve dans \cite{LndvCpc}, prop. 4.4. Armé d'un système de Chevalley-Steinberg, le reste du 4.1. de \cite{BTII} est consacré à en déduire des quasi-épinglages, (c'est-à-dire des isomorphismes avec certains groupes plus familiers) des $K$-sous-groupes radiciels $U_a$ de $G$ pour chaque racine non-divisible $a \in \Phi$. Par exemple, si $a$ n'est pas multipliable et si l'on note $K_{\tilde{a}}$ le corps fixe du stabilisateur dans $\Gamma=\text{Gal}(\tilde{K}/K)$ d'un relèvement $\tilde{a}$ de $a$, alors on obtient un épimorphisme central $\zeta_a: \Res_{K_a/K} \text{SL}_2 \rightarrow G^a$ donné par $ g \mapsto \prod_{\gamma(\tilde{a}) } \zeta_{\gamma(\tilde{a})}(\gamma(g))$ ; on note $x_a$ la restriction de $\zeta_a$ au sous-groupe des matrices triangulaires supérieures unipotentes identifié à $\Res_{K_a/K} \GG_a$ et dont l'image isomorphe n'est autre que le sous-groupe radiciel $U_a$. Dans le cas où $a$ est multipliable, alors l'épimorphisme central admet pour source le groupe $\Res_{K_{\widetilde{2a}}/K} \text{SU}_{3,K_{\tilde{a}}/K_{\widetilde{2a}}}$, où les relèvements de $a$ et $2a$ sont choisis de telle façon que leur différence soit une racine.

Éclaircissons aussitôt la structure du groupe unitaire spécial à trois variables $\text{SU}_{3, L/L_2}$ associé à l'extension quadratique séparable $L/L_2$, qui sera regardé comme le sous-groupe de $\Res_{L/L_2} \text{SL}_3$ qui préserve la forme hermitienne $e_1\sigma(e_3)+e_2\sigma(e_2)+e_3\sigma(e_1)$, en concordance avec \cite{BTII}, 4.1.9 ; le groupe unipotent des matrices triangulaires supérieures de $\text{SU}_{3,L/L_2}$ s'identifie alors à la sous-variété $H(L,L_2)$ de $\Res_{L/L_2} \AAA^2_L$ définie par $v+\sigma(v)=u\sigma(u)$, moyennant le morphisme suivant :
$$\mu(u,v)=  \begin{pmatrix} 
1 & -\sigma(u) & -v \\
0 & 1 & u \\
0 & 0 & 1 
\end{pmatrix}$$
qui admet aussi l'écriture $\mu(u,v)=x_{\tilde{a}}(u)x_{\tilde{a}+\sigma(\tilde{a})}(-v)x_{\sigma(\tilde{a})}(\sigma(u))$ pour un certain choix approprié d'épinglage de $\text{SL}_3$, cf. 4.1.9. de \cite{BTII}. Transportée à $H(L,L_2)$, la loi de groupe du sous-groupe radiciel de $\text{SU}_{3,L/L_2}$ s'écrit comme suit : 
 $$(u_1, v_1)\cdot (u_2, v_2)=(u_1+u_2, v_1+v_2+\sigma(u_1)u_2). $$
On observe que, fixant un élément $\lambda \in L$ de trace $1$, on peut identifier $H(L,L_2)$ par $j_{\lambda}: (u,v) \mapsto (u, v-\lambda u \sigma(u))$ au $L_2$-groupe $H^{\lambda}$ constitué par la sous-variété de $\Res_{L/L_2} \AAA^2 $ définie par l'équation $y+\sigma(y)=0$, où $(x,y) \in \Res_{L/L_2} \AAA^2_L$, et muni de la loi de multiplication : $$(x_1,y_1)\cdot (x_2,y_2)\mapsto (x_1+x_2, y_1+y_2-\lambda x_1\sigma(x_2)+\sigma(\lambda x_1)x_2 ).$$ 

Revenant au but initial de quasi-épingler les groupes pseudo-réductifs et quasi-déployés, on est confronté au fait que la dimension des sous-groupes radiciels est typiquement plus grande que $1$, même dans le cas pseudo-déployé, donc leur groupe d'automorphismes est en général trop gros et, par exemple, ne préserve pas la structure de droite sur une extension non triviale de $K$. Néanmoins on remarque que le groupe de $K$-automorphismes de $\Res_{L/K} \text{SL}_2$ fixant le Cartan est isomorphe à $L^{\times}$ par l'application qui fait correspondre à $l \in L^{\times}$ l'automorphisme 
$$ \begin{pmatrix} 
a & b \\
c & d 
\end{pmatrix} \mapsto \begin{pmatrix} 
a & lb \\
l^{-1}c & d 
\end{pmatrix},$$
tel qu'on le souhaite. Cela montre que le bon choix est de quasi-épingler les revêtements universels $\widetilde{G}^a$ des sous-groupes $G^a$ de rang pseudo-déployé $1$ :
 
\begin{defn}\label{definition quasi epinglage}
Un quasi-épinglage (resp. quasi-système de Chevalley) de $G$ suivant $S$ est la donnée d'isomorphismes $\zeta_a$ d'une restriction des scalaires de $\text{SL}_2$, $\text{SU}_3$ ou $\text{BC}_{1}$ sur $\widetilde{G}^a$ pour chaque racine simple $a \in \Delta$ (resp. toute racine non divisible $a \in \Phi_{\text{nd}}$), dont les restrictions au tore diagonal $\GG_m$ s'identifient aux coracines $a^{\vee}:\GG_m \rightarrow S$ et qui envoient le groupe des matrices triangulaires supérieures unipotentes sur $U_a$ (resp. et de sorte que $\text{int}(m_a) \circ \zeta_{ b}$ est semblable à $\zeta_{ s_a(b)}$ pour chaque couple $ a,  b \in \Phi_{\text{nd}}$).
\end{defn}

Ici l'on note $m_a$ l'image dans $G$ de la matrice anti-diagonale $\text{adiag}(-1, 1)$ (resp. $-\text{adiag}(1,1,1)$, resp. l'image réciproque de $m_a\in \text{SL}_2(L^{1/2})$) si $\widetilde{G}^a$ est identifiée par $\zeta_a$ à $\Res_{L/K} \text{SL}_2$ (resp. $\Res_{L_2/K} \text{SU}_{3,L/L_2}$, resp. $\Res_{L/K}\text{BC}_1$). Deux homomorphismes ayant pour source des restrictions des scalaires (pas nécessairement égales !) des groupes ci-dessus sont dits semblables s'ils se déduisent l'un de l'autre par conjugaison galoisienne et/ou par une involution de signe induite par un point rationnel de $Z^{\text{aut}}[2]$ (ceci n'est pas à craindre en caractéristique $2$, donc rendu superflu dans le cas de $\text{BC}_1$).

\begin{propn}\label{proposition quasi-epinglages}
Soient $K$ un corps de caractéristique $p \geq 0$ tel que $[K:K^p] \leq p$ lorsque $p>0$, $G$ un groupe pseudo-réductif et quasi-déployé, $S$ un sous-tore déployé maximal de $G$. Alors les classes de similitude des quasi-systèmes de Chevalley de $G$ suivant $S$ forment un espace principal homogène sous $Z^{\emph{aut}}(K)/Z^{\emph{aut}}[2](K)$ pour l'action de conjugaison.
\end{propn}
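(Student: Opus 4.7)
Le plan est de décomposer l'énoncé en trois étapes : existence d'un quasi-système de Chevalley, transitivité de l'action par conjugaison de $Z^{\text{aut}}(K)$ sur l'ensemble des classes de similitude, et identification du stabilisateur avec $Z^{\text{aut}}[2](K)$. Les rôles respectifs des trois types de facteurs primitifs (réductif, exotique, $\text{BC}_n$) dans la classification de \ref{classification des groupes pseudo-reductifs} seront traités uniformément grâce à l'isomorphisme du revêtement $\widetilde{G^a}$ avec une restriction des scalaires du groupe primitif correspondant de rang pseudo-déployé $1$.

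Pour l'existence, je commencerai par choisir un isomorphisme $\zeta_a$ pour chaque racine simple $a \in \Delta$ entre $\widetilde{G^a}$ et l'un des groupes $\Res_{L/K}\text{SL}_2$, $\Res_{L_2/K}\text{SU}_{3,L/L_2}$ ou $\Res_{L/K}\text{BC}_1$. Un tel isomorphisme existe car $\widetilde{G^a}$ est un groupe pseudo-réductif quasi-déployé de rang pseudo-déployé $1$, et la normalisation par la coracine $a^{\vee}$ le fixe à un automorphisme du Cartan près. On étend ensuite aux racines non divisibles par $\zeta_{s_a(b)}:=\text{int}(m_a)\circ \zeta_b$, en itérant selon une décomposition de $b$ comme image d'une racine simple par une suite de réflexions. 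La cohérence à similitude près résultera des relations de tresse dans $W$ et du fait que $m_a^2 \in Z^{\text{aut}}[2](K)$ ne produit au plus qu'une ambiguïté de signe absorbée par la similitude.

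Pour la transitivité, étant donnés deux quasi-systèmes $(\zeta_a)$ et $(\zeta'_a)$, chaque composée $\zeta'_a \circ \zeta_a^{-1}$ est un automorphisme de $\widetilde{G^a}$ fixant le Cartan. D'après la discussion qui précède la Définition \ref{definition quasi epinglage}, le groupe de tels automorphismes s'identifie à $L^{\times}$ pour $\Res_{L/K}\text{SL}_2$, avec des analogues explicites pour les deux autres cas. En assemblant ces données sur $\Delta$, on obtient un $K$-point de $\widetilde{Z}^{\text{aut}}$ qui descend à $Z^{\text{aut}}(K)$ et conjugue $(\zeta_a)$ à $(\zeta'_a)$ sur les racines simples ; l'équivariance sous $W$, automatique puisque les $m_a$ sont définis indépendamment du quasi-épinglage, propagera l'identification à $\Phi_{\text{nd}}$ tout entier.

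Il restera à démontrer que le noyau de cette action coïncide avec $Z^{\text{aut}}[2](K)$. L'analyse des inclusions $\widetilde{G^a}\hookrightarrow G$ montre que $Z^{\text{aut}}(K)$ agit localement par multiplication scalaire dans le $L^{\times}$ correspondant, et les éléments qui stabilisent une classe de similitude sont précisément ceux qui induisent au plus une involution de signe sur chaque $\widetilde{G^a}$, c'est-à-dire les éléments de $Z^{\text{aut}}[2](K)$. Le principal obstacle sera d'expliciter la relation de similitude dans le cas $\Res_{L_2/K}\text{SU}_{3,L/L_2}$, où l'involution galoisienne non triviale de $L/L_2$ agit de façon non additive sur $H(L,L_2)$ via la forme hermitienne, afin de vérifier qu'elle ne contribue pas de $K$-points supplémentaires au stabilisateur ; la vacuité du $2$-torsion en caractéristique $2$ pour $\text{BC}_1$ devra aussi être cohérente avec l'exemption faite dans la Définition \ref{definition quasi epinglage}.
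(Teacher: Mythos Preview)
Votre plan global --- existence, transitivité, stabilisateur --- est le bon, et les parties transitivité et stabilisateur suivent essentiellement l'argument de l'article. Toutefois deux points méritent d'être signalés.

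Pour la transitivité, vous écrivez qu'en assemblant les automorphismes $\zeta'_a\circ\zeta_a^{-1}$ sur $\Delta$ on obtient un $K$-point de $\widetilde{Z}^{\text{aut}}$ qui \og descend \fg{} à $Z^{\text{aut}}(K)$. Le fait structurel qui rend cela vrai est l'isomorphisme $Z^{\text{aut}}\cong\prod_{a\in\Delta}\widetilde{Z}^{a,\text{aut}}$ (voir les lem. 6.2.2, éq. 6.2.3 et lem. C.2.3 de \cite{CP}), que vous n'invoquez pas explicitement. Sans lui, rien ne garantit qu'une famille d'automorphismes des $\widetilde{G^a}$, $a\in\Delta$, provienne d'un seul élément de $Z^{\text{aut}}(K)$.

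Le point plus sérieux concerne la cohérence de l'extension aux racines non simples. Vous proposez d'obtenir l'indépendance (à similitude près) de $\zeta_a:=\text{int}(m_{a_n}\cdots m_{a_1})\circ\zeta_{a_0}$ vis-à-vis de l'écriture choisie par les relations de tresse dans $W$ et le fait que $\text{int}(m_a^2)$ soit une involution de signe. Or les relations de tresse pour les relèvements $m_a$ eux-mêmes, même à similitude près, ne sont pas gratuites dans le cadre pseudo-réductif : pour les facteurs exotiques ou de type $\text{BC}_n$, il n'y a pas de système de Chevalley-Steinberg absolu sur lequel s'appuyer directement. L'article contourne cet obstacle différemment : il ne vérifie pas l'indépendance de l'écriture, mais vérifie directement l'axiome de similitude $\text{int}(m_a)\circ\zeta_b\sim\zeta_{s_a(b)}$ en se ramenant, via les morphismes $\pi:G\to\overline{G}$ (cas exotique) ou $\pi:G\to\Res_{K^{1/2}/K}\text{Sp}_{2n}$ (cas non réduit), au cas simplement connexe où les systèmes de Chevalley-Steinberg classiques sont disponibles. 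Comme seules les restrictions $x_a$ des $\zeta_a$ aux sous-groupes radiciels interviennent dans la comparaison, cette réduction suffit. C'est cette réduction au cas réductif qui constitue l'idée manquante dans votre proposition ; sans elle, l'argument par relations de tresse resterait à établir de front dans les trois volets primitifs, ce qui est nettement plus délicat.
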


\begin{proof}
Observons qu'il suffit de vérifier que les classes modulo $\Gamma$ des quasi-épinglages forment un $Z^{\text{aut}}(K)$-torseur et que tout quasi-épinglage se prolonge en un quasi-système de Chevalley, qui est unique à similitude près. Soient $(\zeta_a)_{a\in \Delta}$ et $(\zeta_a')_{a \in \Delta}$ deux quasi-épinglages de $G$. Opérant par $\Gamma$, on peut supposer que $\zeta_a'\circ \zeta_a^{-1}$ est un automorphisme bien défini de $\tilde{G}^a$ qui fixe $\tilde{Z}^a$, donc induit par un élément de $\widetilde{Z}^{a,\text{aut}}$. Enfin il ne reste qu'à observer que $Z^{\text{aut}}=\prod_{a \in \Delta} \widetilde{Z}^{a,\text{aut}}$ d'après les lem. 6.2.2., éq. 6.2.3. et lem. C.2.3 de \cite{CP}. 

Quant à la deuxième affirmation, on choisit, pour chaque racine $a \in \Phi$, une écriture $a=s_{a_n}s_{a_{n-1}}\dots s_{a_1}(a_0)$, où les $a_i \in \Delta$ sont des racines simples positives, et l'on pose $\zeta_a:=\text{int}(m_{a_n}\dots m_{a_1}) \circ \zeta_{a_0}$. Notons que tout quasi-système de Chevalley étendant le quasi-épinglage donné prendre par définition même cette forme à similitude près. Il reste à constater la similitude de $\text{int}(m_a)\circ \zeta_b $ et $\zeta_{s_a(b)}$. Pour cela, on se ramène aisément au cas où $G=\widetilde{G}$ est primitif et même simplement connexe, en appliquant les morphismes $\pi$ de $ G$ dans $\overline{G}$ dans le cas exotique ou dans $\Res_{K^{1/2}/K}\text{Sp}_{2n}$ dans le cas absolument non réduit, puisqu'on n'a besoin que de comparer les restrictions $x_a$ des $\zeta_a$ aux sous-groupes radiciels $U_a$. Mais le cas simplement connexe résulte de l'existence de systèmes de Chevalley-Steinberg : en effet, si $\widetilde{G}^a \cong \Res_{L/K}\text{SL}_2$, alors $m_a=\prod_{\gamma \tilde{a}}m_{\gamma\tilde{a}}$ et $\text{int}(m_a)\circ x_b(u)=\prod_{\gamma\tilde{b}}x_{s_a(\gamma\tilde{b})}(\pm \gamma(u))$, où l'on note $s_a=\prod_{\gamma\tilde{a}}s_{\gamma\tilde{a}}$ le produit des réflexions de tous les relèvements de $a$, ce qui est évidemment semblable à $x_{s_a(b)}$ ; le cas de $\text{SU}_{2n+1}$ est laissé au soin du lecteur (voir les 4.1.9. et 4.1.11. de \cite{BTII}).
\end{proof}

\section{Valuations, appartements et immeubles}\label{section combinatoire}
Dans cette section, on se donne un corps discrètement valué, hensélien, excellent et résiduellement parfait $K$, un groupe quasi-réductif et quasi-déployé $G$ sur $K$ et un sous-tore déployé maximal $S$ de $G$. À ces données on leur associe des valuations au sens de la déf. 6.2.1. de \cite{BTI}, moyennant lesquelles les 6. et 7. de \cite{BTI} fournissent l'appartement $\ah(G,S,K)$ et l'immeuble $\ih(G, K)$ de Bruhat-Tits. Nous montrons aussi que les immeubles construits ici coïncident avec ceux de M. Solleveld, cf. \cite{Sol}.
\subsection{} \label{donnees radicielles abstraites et valuees, appartements}
Soit $(K, \omega)$ un corps discrètement valué, hensélien, tel que l'anneau d'entiers $\oh$ soit excellent, le corps résiduel $\kappa$ soit parfait et $\omega(K^{\times})=\ZZ$. Alors $K$ vérifie l'inégalité $[K:K^p]\leq p$ lorsqu'il est de caractéristique $p$, vu que le homomorphisme de complétion $K \rightarrow \widehat{K}$ est une extension séparable de corps et que $\widehat{K} \cong \kappa\rpot{\varpi}$ avec $\kappa$ parfait. Soient $G$ un $K$-groupe quasi-réductif quasi-déployé et $S$ un $K$-tore déployé maximal de $G$, de telle manière que son centralisateur $Z$ devienne un Cartan. Rappelons que $(Z(K), U_a(K))_{a \in \Phi}$ porte la structure d'une donnée radicielle (abstraite) génératrice dans $G(K)$ au sens du 6.1 de \cite{BTI}, d'après la rem. C.2.28 de \cite{CGP} (même dans le cas non quasi-déployé). Le reste du paragraphe sera consacré à améliorer cet énoncé, en munissant cette donnée de valuations au sens de la déf. 6.2.1. de \cite{BTI}. 

Considérons un quasi-système de Chevalley de $G^{\text{psréd}}$ dont l'existence est assuré par la prop. \ref{proposition quasi-epinglages} et reprenons les notations de \ref{quasi-epinglages}, où le but $U_a^{\text{psréd}}$ de $x_a$ sera identifié à $U_a$ au moyen de $q:G \rightarrow G^{\text{psréd}}$. On lui associe des fonctions $\varphi_a: U_a(K) \rightarrow \RR \cup \{+\infty\}$ inspirées du 4.2.2. de \cite{BTII} et données par les formules suivantes:

\begin{defn} 
Soit $a \in \Phi_{\text{nd}}$ une racine non divisible. On définit la fonction $\varphi_a: U_a(K) \rightarrow \RR \cup \{+\infty\}$ de la manière suivante :
\begin{itemize} 
\item si $\text{dom}(\zeta_a)=\Res_{K_a/K}\text{SL}_2$, alors $\varphi_a(x_a(r))=\omega(r)$ ; 
\item si $\text{dom}(\zeta_a)= \Res_{K_{2a}/K}\text{SU}_{3,K_a/K_{2a}}$, alors $\varphi_a(x_a(u, v)) =\frac{1}{2}\omega(v)$ ; 
\item si $\text{dom}(\zeta_a)= \Res_{K_{2a}/K}\text{BC}_1$, alors $\varphi_a(x_a(x,y))= \frac{1}{2}\omega(\alpha x^2+y)$.
\end{itemize}
Pour les racines divisibles, on définit $\varphi_{2a}$ comme étant la restriction de $2\varphi_a$ à $U_{2a}(K)$.
\end{defn}

Signalons que cette construction joue un rôle essentiel dans tout l'article, en étant la base sur laquelle toutes les remarques et preuves suivantes s'appuient. En fait, cette famille de fonctions induit une valuation des points rationnels, comme l'on montre ci-dessous:

\begin{propn}\label{verificacao da valorizacao}
La donnée $\varphi=(\varphi_a)_{a \in \Phi}$ pour toute racine $a \in \Phi$, est une valuation de la donnée radicielle abstraite $(Z(K), U_a(K))_{a \in \Phi}$ et elle est, en plus, compatible à $\omega$ au sens où $\varphi_a(zuz^{-1})-\varphi_a(u)=\omega(a(z))$ pour tout $z \in Z(K)$ et $u \in U_a(K)$.
\end{propn}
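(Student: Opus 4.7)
Le plan est de vérifier séparément chacun des axiomes (V0)-(V5) d'une valuation au sens de \cite{BTI} déf. 6.2.1, ainsi que la formule de compatibilité $\varphi_a(zuz^{-1})=\varphi_a(u)+\omega(a(z))$. La première réduction consiste à se ramener au cas pseudo-réductif : puisque $q: U_a\xrightarrow{\sim} U_a^{\text{psréd}}$ et $S\cong S^{\text{psréd}}$ d'après \ref{proprietes generales des groupes pseudo-reductifs}, aussi bien les $\varphi_a$ que la donnée radicielle abstraite et l'action de $Z(K)$ sur les $U_a(K)$ ne dépendent que de $G^{\text{psréd}}$. J'appliquerais ensuite le théorème de classification \ref{presentation de groupes pseudo-reductifs} à la présentation $(K'/K, G', C', C)$ de $G^{\text{psréd}}$, en observant que les $\varphi_a$ proviennent par construction des quasi-épinglages des fibres primitives de $G'$ et que la propriété d'être une valuation compatible à $\omega$ est stable par restriction des scalaires (cf. 4.2 de \cite{BTII}) ainsi que par la modification du Cartan prescrite par la présentation. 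Cela ramène le problème aux trois classes primitives.

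Ces classes se traitent ensuite séparément. Le cas réductif semi-simple, simplement connexe et absolument presque simple correspond à \cite{BTII} 4.2.3, appliqué au système de Chevalley-Steinberg sous-jacent au quasi-système de Chevalley choisi. Dans le cas exotique basique pseudo-déployé, l'immersion fermée $G\hookrightarrow \Res_{K^{1/p}/K}H$ identifie $U_a(K)$ à $K_a\in\{K, K^{1/p}\}$ (selon la longueur de $a$) et $\varphi_a$ à $\omega|_{K_a}$, de sorte que tous les axiomes se déduisent du cas déployé pour $H$ sur $K^{1/p}$ en suivant l'effet de l'isogénie très spéciale $\pi$ sur les formules (envoyant $x_a(r)$ sur $x_{\bar a}(r)$ ou $x_{\bar a}(r^p)$ suivant le cas). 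Pour $\text{BC}_n$, les racines ni divisibles ni multipliables se traitent de même ; pour une racine multipliable $c$, je procèderais par calcul direct à partir de la loi explicite rappelée en \ref{classification des groupes pseudo-reductifs}, en exploitant la caractéristique $2$ via $\alpha(x_1+x_2)^2=\alpha x_1^2+\alpha x_2^2$, d'où la stabilité des $U_{c,l}=\varphi_c^{-1}([l,+\infty])$ sous addition (assurant (V1)) et la restriction $\varphi_{2c}=\omega|_{\GG_a(K)}$ (assurant (V3)). La compatibilité à $\omega$ se vérifie dans chaque cas en remarquant que $z\in Z(K)$ opère sur les coordonnées de $U_a$ par multiplication par $a(z)$ (éventuellement élevé au carré dans le cas d'une coordonnée divisible), ce qui produit précisément le décalage $\omega(a(z))$ après la normalisation par $\frac{1}{2}$ intégrée dans la définition de $\varphi_a$.

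L'obstacle principal sera la vérification des axiomes (V4) et (V5) pour les racines multipliables, qui exigent le calcul explicite de l'élément $m(u)=u'uu''\in \widetilde{G}^c(K)$ et de son action par conjugaison sur les sous-groupes radiciels voisins. Dans le cas $\text{SU}_3$, ce calcul, bien que fastidieux, apparaît déjà en \cite{BTII} 4.2.21 et se transporte tel quel au cadre pseudo-réductif grâce à la forme standard du quasi-système de Chevalley. Dans le cas $\text{BC}_1$, je procèderais à un calcul matriciel dans $\Res_{K^{1/2}/K}\text{Sp}_2$ via le morphisme $\pi$, en exploitant que celui-ci transforme $\varphi_c$ en une normalisation appropriée de $\omega$ sur son image afin de ramener (V5) au cas symplectique déjà connu ; l'indépendance du résultat par rapport au choix de $\alpha\in K^{1/2}\setminus K$ est assurée par la note en bas de page de \ref{classification des groupes pseudo-reductifs}, qui fournit l'isomorphisme rigide $U_{c,\beta}\rightarrow U_{c,\alpha}$ compatible aux projections.
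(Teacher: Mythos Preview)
Your overall reduction (passage to $G^{\text{psréd}}$, then to primitive factors via the classification) is sound and close in spirit to the paper's argument. There is, however, a genuine gap and a confusion about the axioms.

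The gap concerns axiom (V3), the commutator condition $(U_{a,k},U_{b,l})\subseteq\prod U_{pa+qb,pk+ql}$, which you do not actually address: the sentence \og la restriction $\varphi_{2c}=\omega|_{\GG_a(K)}$ (assurant (V3)) \fg{} has nothing to do with commutators between distinct root groups and looks rather like a verification of (V4), which is in fact trivial since $\varphi_{2a}:=2\varphi_a|_{U_{2a}}$ by definition. Your last paragraph discusses the action of $m(u)$ on the $U_b$, which is (V2), not (V4). The paper's proof shows that (V3) is the delicate point: one reduces to primitive pseudo-split rank $2$; the reductive and exotic cases follow from the formulas in the appendix of \cite{BTII} (the exotic valuation being the restriction of one on $\Res_{K^{1/p}/K}G'$); for $\text{BC}_2$ one must do a case analysis on the pair $(a,b)$, the nontrivial case being $a$ multipliable and $b$ neither, where one compares with the commutator relations in $\text{Sp}_4$ via $(U_{a,k},U_{b,l})=(\bar U_{2a,2k},\bar U_{b,l})$. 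Your proposal contains none of this.

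A secondary point: your verification of the compatibility $\varphi_a(zuz^{-1})-\varphi_a(u)=\omega(a(z))$ is too quick. The root $a$ is only a \emph{rational} character of $Z$, so $a(z)$ does not literally make sense; one sets $\omega(a(z)):=\frac{1}{N}\omega((Na)(z))$ for $N$ divisible enough. The paper proceeds by observing that the $Z$-action on $U_a$ factors through $Z^{a,\text{aut}}$, identifies the latter explicitly as a restriction of scalars of a torus (via \cite{CGP}), and then invokes the computations of \cite{BTII}, 4.2.7 for $\text{SL}_2$ and $\text{SU}_3$. Your claim that $z$ acts by scalar multiplication by $a(z)$ is not literally correct at this level of generality and would need to be justified along these lines.
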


\begin{proof}
Observons d'abord qu'on peut supposer sans perdre de généralité que $G=G^{\text{psréd}}$ est pseudo-réductif. Alors, (V 0) et (V 4) sont évidemment remplies, tandis que (V 1) et (V 5) découlent des calculs classiques avec les groupes $\text{SL}_2$ ou $\text{SU}_3$, puisque $\text{BC}_1(K)=\text{SL}_2(K^{1/2})$. Les mêmes arguments avec les groupes classiques montrent que, pour déduire (V 2), il suffit d'en vérifier lorsque $m=m_a$ et de montrer la compatibilité de $\varphi$ avec $\omega$, c'est-à-dire que la différence $\varphi_a(zuz^{-1})-\varphi_a(u)$ est égale à $\omega(a(z))$ pour tout $z \in Z(K)$ et $u \in U_a(K)$. Ici on regarde $a$ comme un caractère rationnel de $Z$, c'est-à-dire, un élément du $\QQ$-espace vectoriel $X_K^*(Z)\otimes \QQ\cong X^*(S)\otimes\QQ$ - pour voir l'injectivité du morphisme de restriction, on n'a qu'à remarquer que $Z/S$ est une extension d'un groupe unipotent par un tore anisotrope ; la surjectivité résulte à son tour de la prop. VII.1.5. de \cite{Ray}. et l'on définit $\omega\circ a$ par la formule $\frac{1}{N}\omega\circ Na$ pour $N$ assez grand. Notons que l'opération de $Z$ sur $U_a$ se factorise en $Z \rightarrow Z^{a, \text{aut}}$ suivi de l'opération canonique de ce dernier sur $U_a$. Selon le th. 1.3.9 et la prop. 9.8.15 de \cite{CGP}, le groupe $Z^{a, \text{aut}}$ s'identifie à $\Res_{K_a/K} T^{\text{ad}}$ lorsque $\tilde{G}^a=\Res_{K_a/K}\text{SL}_2$ (resp. $\Res_{K_{2a}/K} T^{\text{ad}}$ lorsque $\tilde{G}^a=\Res_{K_{2a}/K}\text{SU}_{3,K_a/K_{2a}}$, resp. $\Res_{K_{2a}/K} Z$ lorsque $\tilde{G}^a=\Res_{K_{2a}/K}\text{BC}_1$). En particulier, la compatibilité résulte des calculs de \cite{BTII}, 4.2.7., pour $\text{SL}_2$ ou $\text{SU}_3$.

Finalement on vérifie (V 3), pour lequel on peut supposer $G$ primitif et même de rang pseudo-déployé égal à $2$. Si $G$ est réductif, l'inclusion $(U_{a,k}, U_{b,l})\subseteq \prod_{pa+qb \in \Phi, (p,q)\in \NN^2} U_{pa+qb, pk+ql}$ peut être déduite des formules de l'appendice de \cite{BTII}. Si $G$ est exotique, alors on voit par construction que la valuation est déduite d'une de $\Res_{K^{1/p}/K}G'$ par restriction, donc on se ramène au cas précédent. Supposons enfin que $G\cong \text{BC}_1$ est absolument non réduit et commençons par observer que, si les racines $a, b$ ne sont pas multipliables, l'axiome résulte du cas classique de $\text{Sp}_{4}$. Si les rayons radiciels attachés à $a$ et $b$ sont les deux pluriels, alors on constate sans peine que les sous-groupes radiciels commutent. Il reste à étudier le cas où $a$ est multipliable, $b$ n'est ni multipliable ni divisible et les racines $a+b$, $2a+b$ et $2a+2b$ appartiennent à $\Phi$. Dans ce cas, l'on a $(U_{a,k}, U_{b,l})=(\bar{U}_{2a,2k}, \bar{U}_{b,l})\subseteq \bar{U}_{2a+b, 2k+l}\bar{U}_{2a+2b, 2k+2l}=U_{a+b, k+l}U_{2a+b, 2k+l}$, où les $\bar{U}$ désignent les sous-groupes radiciels de $\text{Sp}_{4}$.
\end{proof}

On a construit une donnée radicielle valuée $(Z(K), U_a(K), \varphi)_{a \in \Phi}$ dans les points rationnels $G(K)$ du groupe quasi-réductif et quasi-déployé donné et maintenant on peut appliquer la théorie du 6.2 de \cite{BTI} pour obtenir l'appartement (réduit) de $G$ par rapport à $S$, comme on va expliquer dans la suite.

\begin{defn}
 L'appartement $\ah(G, S, K)$  est l'ensemble de toutes les valuations $\psi=(\psi_a)$ de la donnée radicielle abstraite considérée équipollentes à $\varphi=(\varphi_a)$, c'est-à-dire, telles que la fonction $\psi_a-\varphi_a$ soit constante égale à $a(v)$ pour un certain vecteur $v$ appartenant à $X_*(S^{\text{dér}}) \otimes \RR$ indépendant de $a$, où $S^{\text{dér}}:=(S \cap G^{\text{dér}})_{\text{réd}}^0$. Cet ensemble vient muni d'une structure de complexe polysimplicial engendrée par les hyperplans $\partial \alpha_{a,k}:= \{a(x-\varphi)+ k=0\}$ pour $k \in \Gamma_a'=\{ \varphi_a(u): u \in U_a(K)\setminus \{1\}, \varphi_a(u)=\text{sup }\varphi_a(uU_{2a}) \}$ et $a \in \Phi$.
\end{defn} 
 
En particulier, on voit que l'appartement ne dépend pas du choix de quasi-système de Chevalley de $G$, vu que leurs classes de similitude sont conjugués par $Z^{\text{aut}}(K)$ (voir le cor. ci-dessous ou bien le cor. 6.2.8 de \cite{BTI}, qui permet de déterminer le vecteur de translation par rapport à une base $\Delta$ de $\Phi$). Les racines affines de l'appartement sont précisément les demi-espaces $\alpha_{a,k}:=\{a(x-\varphi)+ k\geq 0\}$, $k \in \Gamma_a$, on note $\Sigma$ l'ensemble de celles-ci et on définit l'échelonnage $\eh \subseteq \Sigma \times \Phi$ comme étant la partie des $(\alpha,a)$ tels que $\alpha=\alpha_{a,k}$ avec $k \in \Gamma_a'$. Le groupe de Weyl affine $W_{\text{af}}$ est le groupe de transformations affines de $\ah(G, S, K)$ engendré par les réflexions $r_\alpha$ par rapport aux bords $\partial \alpha$ des racines affines. Le groupe $N(K)$ opère sur $\ah(G, S, K)$ par la règle suivante: si l'on pose $\psi= n \cdot \varphi $, alors  $\psi_a(u)=\varphi_{w^{-1}(a)}(n^{-1} u n)$. Cette opération est bien définie, préserve les racines affines et son image dans le groupe d'automorphismes de $\ah(G, S, K)$ contient $W_{\text{af}}$ (voir la prop. 6.2.10. de \cite{BTI} pour toutes ces affirmations). On sait déjà qu'un élément $z \in Z(K)$ opère sur $\ah(G, S, K)$ comme la translation de vecteur $\nu(z)$ tel que $a(\nu(z))=-\omega(a(z))$. En particulier, le plus grand sous-groupe borné $Z(K)_b$ de $Z(K)$, dont l'existence sera constatée dans la prop. \ref{modele de neron}, opère de manière triviale sur $\ah(G, S, K)$. On note aussi que l'opération de $N(K)$ détermine $\varphi_a(u)$, $u \in U_a(K)$ comme étant l'unique nombre réel $k$ tel que l'automorphisme de $\ah(G, S, K)$ induit par $m(u)$ soit la réflexion de bord $\partial \alpha_{a,k}$, où l'on note $m(u)$ le seul élément de $N(K)$ tel que $m(u)=u'uu''$ avec $u', u'' \in U_{-a}(K)$. Par suite, on tire le corollaire suivant, qui concerne les valuations compatibles à $\omega$.

\begin{cor}
Si $\psi$ est une valuation de $(Z(K), U_a(K))_{a \in \Phi} $ compatible à $\omega$, alors $\psi \in \ah(G,S, K)$ est équipollente à l'une des valuations de Chevalley-Steinberg construites ci-dessus.
\end{cor}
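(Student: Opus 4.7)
Le plan est de construire, à partir d'un $\psi$ compatible à $\omega$ donné, un vecteur de translation $v \in X_*(S^{\text{dér}}) \otimes \RR$ tel que $\psi_a - \varphi_a = a(v)$ pour toute $a \in \Phi$. On commencera, comme dans la preuve de la prop.~\ref{verificacao da valorizacao}, par se ramener au cas pseudo-réductif, l'identification canonique $U_a \cong U_a^{\text{psréd}}$ préservant à la fois les axiomes de valuation et la condition de compatibilité à $\omega$. Posant $c_a(u) := \psi_a(u) - \varphi_a(u)$ pour $u \in U_a(K) \setminus \{1\}$, la compatibilité à $\omega$ des deux valuations entraîne immédiatement que $c_a$ est invariant sous l'action par conjugaison de $Z(K)$ sur $U_a(K)$.

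Le premier pas décisif consiste à établir la constance de $c_a$. Par l'intermédiaire du quasi-épinglage $\zeta_a$ de \ref{quasi-epinglages}, $U_a(K)$ s'identifie à $K_a$, à $H(K_a, K_{2a})$ ou à l'analogue pour $\text{BC}_1$, et l'action par conjugaison du sous-groupe $\Res_{K_a/K}\, a^{\vee}(\GG_m)(K) \subseteq Z(K)$ (resp. $\Res_{K_{2a}/K}\, a^{\vee}(\GG_m)(K)$ dans le cas multipliable) visite tous les niveaux de la filtration par les valeurs de $\varphi_a$. Conjuguée aux axiomes (V 1), (V 4) --- et (V 5) dans le cas multipliable, où il faut séparer soigneusement les contributions issues de $U_a$ et $U_{2a}$ --- cette richesse forcera la fonction $c_a$ à être constante, de valeur notée encore $c_a \in \RR$. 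Je m'attends à ce que ce pas constitue le principal obstacle technique, notamment dans le cas multipliable, où la structure de Heisenberg sous-jacente demande un contrôle fin par rapport à la filtration de la valuation.

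Il restera ensuite à relier les différentes constantes $c_a$. L'axiome (V 2) appliqué séparément à $\psi$ et à $\varphi$ pour un couple $(a, b) \in \Phi^2$ et un $u \in U_a(K) \setminus \{1\}$ se réduit, par soustraction, à une relation linéaire $c_b - c_{s_a(b)} = \lambda \cdot c_a$ avec $\lambda \in \QQ$ déterminé par le système de racines. On en déduit aussitôt $c_{-a} = -c_a$ ainsi que, par itération, $c_{a+b} = c_a + c_b$ dès que $a+b \in \Phi$. L'application $a \mapsto c_a$ se prolongera alors $\RR$-linéairement au $\QQ$-espace vectoriel engendré par $\Phi$ dans $X^*(S) \otimes \RR$ et proviendra par dualité d'un vecteur $v$ du sous-espace de $X_*(S) \otimes \RR$ engendré par $\Phi^{\vee}$, lequel s'identifie à $X_*(S^{\text{dér}}) \otimes \RR$. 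On obtiendra ainsi $\psi_a - \varphi_a = a(v)$ pour tout $a \in \Phi$, donc $\psi$ sera équipollente à $\varphi$ et appartiendra à $\ah(G, S, K)$.
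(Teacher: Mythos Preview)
Ton plan est différent de la preuve de l'article et recèle une vraie lacune au pas de constance. L'invariance de $c_a$ sous la conjugaison par $Z(K)$ via $a^\vee$, combinée aux seuls axiomes (V~1) et (V~4), ne suffit pas. Voici un contre-exemple dans $G=\mathrm{SL}_2$ : la fonction $\psi_a(x_a(r)):=2\lfloor\omega(r)/2\rfloor$ satisfait (V~1) (car $\lfloor\cdot/2\rfloor$ est croissante) et la compatibilité $\psi_a(a^\vee(t)\,u\,a^\vee(t)^{-1})=\psi_a(u)+2\omega(t)$, mais $c_a=\psi_a-\varphi_a$ vaut $0$ ou $-1$ selon la parité de $\omega(r)$. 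Ce qui bloque l'extension de ce $\psi_a$ en une valuation de toute la donnée radicielle est précisément (V~2) : le calcul de $\psi_{-a}(m(u_0)\,v\,m(u_0)^{-1})-\psi_a(v)$ donne alors une quantité dépendant de la parité de $\omega(v)$, contredisant la constance requise. Autrement dit, c'est (V~2) --- et non (V~1), (V~4) --- qui force la constance de $c_a$, même dans le cas non multipliable ; or tu ne l'invoques qu'au pas de linéarité.

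La preuve de l'article court-circuite cette difficulté. Elle s'appuie sur le fait, énoncé juste avant le corollaire, que $\varphi_a(u)$ est l'unique réel $k$ tel que $m(u)$ agisse sur l'appartement comme la réflexion d'hyperplan $\partial\alpha_{a,k}$. Après avoir translaté $\varphi$ pour que $\varphi_a(u_a)=\psi_a(u_a)$ sur une base $\Delta$, les actions de $N(K)$ sur $\ah(\varphi)$ et $\ah(\psi)$ coïncident : la partie linéaire est celle du groupe de Weyl, les translations par $Z(K)$ sont fixées par la compatibilité à $\omega$, et les $m(u_a)$ pour $a\in\Delta$ agissent comme la même réflexion des deux côtés ; ces éléments engendrent $N(K)$. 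L'égalité $\varphi_a(u)=\psi_a(u)$ pour tout $a$ et tout $u$ s'ensuit de la caractérisation par $m(u)$ (cf. aussi \cite{BTI}, 6.2.12~b). Ton approche, une fois réparée en injectant (V~2) dans le pas de constance, reviendrait en substance à redémontrer cette caractérisation par la réflexion ; elle est donc plus laborieuse sans gain réel. Note enfin que ta relation $c_b-c_{s_a(b)}=\lambda\,c_a$ au second pas n'est pas une simple soustraction de (V~2) : elle requiert déjà de savoir que $C_\psi(m(u))=-\langle b,a^\vee\rangle\,\psi_a(u)$, ce qui est exactement la caractérisation ci-dessus.
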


\begin{proof}
Quitte à faire une translation, on peut trouver $\varphi \in \ah(G,S,K)$ telle que $ \varphi(u_a)=\psi(u_a)$ pour un certain $u_a \in U_a(K)^*$, où $a$ parcourt une base $\Delta$ de $\Phi$. En particulier, si l'on identifie les appartements correspondants à $\varphi$ et $\psi$, en prenant ces points pour l'origine, il résulte par hypothèse que cette identification est aussi invariante par l'action de $N(K)$ par des automorphismes affines, d'où l'égalité $\varphi=\psi$ (voir aussi la rem. 6.2.12. b de \cite{BTI}).
\end{proof}

Parfois il s'avère plus utile de considérer l'appartement élargi $\ah^1(G, S, K)$, défini comme le produit $\ah(G, S, K)\times X_*(S^c)\otimes \RR$, où $S^c$ est le plus grand tore central de $S$ et $X_*(S^c)\otimes \RR$ est vu comme un complexe simplicial sans facettes propres et non vides. Afin de prolonger l'opération de $N(K)$, on note que $X_*(S^c)\otimes \RR$ s'identifie naturellement au dual de $X_K^*(G)\otimes \RR=X_K^*(N/N^{\text{dér}})\otimes \RR$ et on pose $n\cdot(x, v)=(nx, v+\theta(n)) $, où $\theta(n) \in X_*(S^c)\otimes {\RR}$ est l'unique élément tel que $\chi(\theta(n))=-\omega(\chi(n)) $ pour tout $\chi \in X_K^*(G)$. On remarque aussitôt que $z \in Z(K)$ opère par la translation de vecteur $\nu(z)+\theta(z)$ caractérisé par la formule $\chi(\nu(z)+\theta(z))=-\omega \circ \chi(z) $ pour tout caractère rationnel $\chi \in X_K^*(Z)\otimes \QQ$, ce qu'on vérifie aussitôt en faisant usage de l'orthogonalité des décompositions $X_*(S)\otimes \RR=X_*(S^c)\oplus X_*(S^{\text{dér}})$ resp. $X^*(S)\otimes \RR=X^*(S^c)\otimes \RR \oplus X^*(S^{\text{dér}})\otimes \RR $ (voir aussi \cite{BTII}, 4.2.16.). Le noyau de l'application $N(K) \rightarrow \text{Aut}(\ah^1(G,S,K))$ étant $Z(K)_b$, on appelle l'image le groupe de Weyl affine élargi, qui sera noté $W_{\text{af}}^1=N(K)/Z(K)_b$. Lorsqu'on aura discuté le modèle de Néron $\Zh^0$ de $Z$, on parlera aussi du groupe d'Iwahori-Weyl $\widetilde{W}=N(K)/\Zh^0(\oh)$. Il est intéressant de remarquer que, au moins lorsque $G=G^{\text{psréd}}$ est pseudo-réductif, le groupe de Weyl affine $W_{\text{af}}$ se relève en un sous-groupe canonique de $\widetilde{W}$. En effet, on a une identification $\ah(G,S, K)=\ah(\widetilde{G},\widetilde{S},K)$ équivariante par les opérations de $\widetilde{N}(K)$ et $N(K)$, donc il suffit de vérifier que $\widetilde{W}$ s'identifie à $W_{\text{af}}$, lorsque $G=\widetilde{G}$ est modérément universel (parce que alors $\widetilde{W}$ admettrait un morphisme de son cousin modérément universel qui s'identifie à $W_{\text{af}}$ à son tour). En ce cas, on voit facilement que la réunion de $\Zh^0(\oh)$ et des $M_{a,k}$ du 6.2.2. de \cite{BTI} engendre $N(K)$ et que $\Zh^b=\Zh^0$, puisque $Z$ est un produit de groupes de la forme $\Res_{L/K} \GG_m$.

\subsection{} \label{immeubles}
Le but de ce paragraphe est de prolonger l'appartement $\ah(G, S, K)$ associé à un groupe quasi-réductif et quasi-déployé $G$ et à l'un de ses sous-tores déployés maximaux $S$, muni de son opération par $N(K)$, en l'immeuble de Bruhat-Tits $\ih(G, K)$ muni d'une opération par $G(K)$, dont la procédure est grosso modo formelle, lorsqu'on s'est décidé sur ce que doivent être les fixateurs des points $x \in \ah(G, S, K)$ (ceux-ci ne seront pas les sous-groupes parahoriques, en vertu de leur \og non connexité \fg{}). Étant donné une partie non vide et bornée $\Omega$ de $\ah(G, S, K)$, on lui associe une fonction concave à valeurs réelles $f_{\Omega}$ partant de $\Phi$, déterminée par la formule suivante: $f_{\Omega}(a):=\text{inf}\{ k \in \RR: a(\psi-\varphi)+ k \geq 0, \forall \psi \in \Omega \} $. La formule dépende nettement de $\varphi$, mais on omettra ceci lorsqu'aucune confusion n'est à craindre. Enfin le stabilisateur $P_{\Omega}$ associé à $\Omega$ est le sous-groupe de $G(K)$ engendré par le stabilisateur $N_{\Omega}$ de $\Omega$ dans $N(K)$ et le sous-groupe $U_{\Omega}$ engendré par $U_{a, f_{\Omega}(a)}$ pour toutes les racines $a \in \Phi$. Venons finalement à la définition selon le 7.4. de \cite{BTI} de l'immeuble $\ih(G, K)$ associé à la donnée radicielle valuée de $G(K)$:

\begin{defn}
L'ensemble sous-jacent à l'immeuble $\ih(G, K)$ de $G$ est le quotient du produit $G(K) \times \ah(G, S, K)$ par la relation d'équivalence qui identifie $(g_1, \psi_1)$ et $(g_2, \psi_2)$ s'il existe $n \in N(K)$ tel que $n \psi_1=\psi_2$ et $ g_1^{-1}g_2 n \in P_{\psi_1}$. Cet ensemble quotient est muni d'une opération naturelle de $G(K)$ déduite par passage au quotient de l'opération à gauche triviale de $G(K)$ sur le produit ci-dessus et les orbites de l'appartement $\ah(G, S, K) \subseteq \ih(G, K)$, $\psi \mapsto (1, \psi)$ pour cette opération sont les appartements de $\ih(G,K)$ et lui fournissent une structure de complexe simplicial.
\end{defn}

Commençons par discourir sur la apparente dépendance de $\ih(G, K)$ du choix d'un tore $K$-déployé maximal $S$ de $G$ et notons-le provisoirement $\ih(G, S, K)$ pour éviter des confusions ci-dessous. Notons que, si l'on se donne un isomorphisme $\alpha: (G, S) \rightarrow (G', S')$ de couples constitués d'un $K$-groupe quasi-réductif et quasi-déployé et d'un $K$-sous-tore déployé maximal, alors on peut transporter la structures d'une donnée radicielle valuée d'un membre à l'autre par $\alpha$, donc en particulier on arrive à un isomorphisme $\alpha$-équivariant $\ih(G, S, K) \cong \ih(G', S', K)$. Compte tenu que les $K$-sous-tores déployés maximaux de $G$ sont conjugués, on achève un isomorphisme $\ih(G, S, K) \cong \ih(G, gSg^{-1}, K)$ qui est équivariant par les opérations canoniques de $G(K)$ liées par la conjugaison $\text{int}(g): G(K) \rightarrow G(K)$. En composant cet isomorphisme à la droite avec la bijection $g^{-1}:\ih(G, S, K) \rightarrow \ih(G, S, K), h\psi \mapsto g^{-1}h\psi$ qui est équivariante par $\text{int}(g^{-1})$, on achève un isomorphisme $G(K)$-équivariant d'immeubles $\ih(G, S, K)$ et $\ih(G, gSg^{-1}, K)$, qui permet d'identifier $\ah(G, gSg^{-1}, K)$ à $g\ah(G, S, K)$ dans $\ih(G, S, K)$. Par suite, le $G(K)$-ensemble $\ih(G, K)$ ne dépend pas vraiment de $S$ (raison pour laquelle on revient à la notation initiale) et ses appartements sont en correspondance bijective avec les sous-tores $K$-déployés maximaux de $G$. On voit aussi que $P_{\Omega}$ est le stabilisateur de $\Omega$ dans $G(K)$ pour chaque partie $\Omega$ contenue dans un appartement, donc ne dépendant que de $\Omega$ en tant que partie de $\ih(G, K)$ (voir aussi la prop. 7.4.8. de \cite{BTI}). Il y a aussi des autres sous-groupes associés à $\Omega$, dont l'on aura besoin : le fixateur connexe $P_{\Omega}^0$ (resp. le fixateur intermédiaire $P_{\Omega}^1$, resp. le fixateur borné $\hat{P}_{\Omega}$, resp. le stabilisateur borné $P_{\Omega}^{\dagger}$) obtenu en substituant $N^0_{\Omega}:=\Zh^0(\oh)(U_{\Omega}\cap N(K))$ (resp. $N^1_{\Omega}:=Z(K)_b(U_{\Omega}\cap N(K))$, resp. le fixateur $\hat{N}_{\Omega}$ de $\Omega$ dans $N(K)^1$, resp. le stabilisateur $N_{\Omega}^{\dagger}$ de $\Omega$ dans $N(K)^1$) à $N_{\Omega}$ dans la définition de $P_{\Omega}$. De plus, on peut aussi élargir l'immeuble, en posant $\ih^1(G, K)=\ih(G, K) \times X_*(S^c)\otimes {\RR}$, sur lequel $g \in G(K)$ opère par $g\cdot(x, v)=(gx, v+\theta(g))$, et dont les appartements s'identifient à $\ah^1(G,S,K)$, où $S$ décrit l'ensemble des $K$-sous-tores déployés maximaux de $G$. L'un des avantages de ce point de vue est que cet immeuble élargi induit la même bornologie sur $G(K)$ que la valuation $\omega$, d'après le cor. 8.1.5. de \cite{BTI} et la prop. \ref{modele de neron} (comparer avec la prop. 4.2.19. de \cite{BTII}). Pour finir, on remarque que la construction est naturelle pour les extensions séparables $K'/K$ de corps discrètement valués, henséliens, entièrement excellents et résiduellement parfaits, c'est-à-dire, on a une application $\ih(G,K) \rightarrow \ih(G,K')$, qui se factorise à travers les $\Gamma$-invariants si $K'/K$ est galoisienne de groupe $\Gamma$.

Pour conclure, nous observons que nos immeubles de Bruhat-Tits coïncident avec ceux $\ih_{\text{Sol}}(G,K)$ construits à la manière de M. Solleveld \cite{Sol}, qui sont définis comme un produit d'immeubles associés à des groupes réductifs convenables, conformément à la classification par J. Tits des immeubles affines épais de rang $\geq 4$, cf. \cite{TitsImAff}.

\begin{propn}\label{compatibilite avec solleveld}
Il existe un et un seul isomorphisme $G(K)$-équivariant $\ih(G,K) \cong \ih_{\emph{Sol}}(G,K)$ d'immeubles.
\end{propn}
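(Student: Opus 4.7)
Le plan est de décomposer $G$ suivant sa présentation canonique et de vérifier la coïncidence sur chacun des facteurs primitifs, où la construction de M. Solleveld est donnée explicitement en termes de groupes réductifs convenables. Premièrement, comme la formation des points rationnels $G(K)$, des tores déployés maximaux et des sous-groupes radiciels se factorise à travers le quotient pseudo-réductif $G^{\text{psréd}}$ (\ref{proprietes generales des groupes pseudo-reductifs}), on se ramène au cas où $G$ est pseudo-réductif. Le théorème \ref{presentation de groupes pseudo-reductifs} fournit alors une présentation canonique $(K'/K, G', C', C)$, et tant notre construction que celle de M. Solleveld sont compatibles d'un côté avec la restriction des scalaires le long de $K'/K$ (décomposable en extensions finies séparables, car $K$ est résiduellement parfait et excellent), de l'autre avec la modification du Cartan central, ce qui réduit la vérification à chaque fibre primitive $G_i'$ sur $K_i'$.

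Deuxièmement, on traite les trois volets. Dans le cas réductif absolument presque simple et simplement connexe, notre construction redonne exactement celle classique de \cite{BTII}, qui est par définition celle retenue par M. Solleveld. Pour un $G_i'$ exotique basique pseudo-déployé, l'isogénie très spéciale $\pi : G_i' \to \bar{G}_i'$ identifie (à une correspondance explicite des racines décrite dans \ref{quasi-epinglages}) les sous-groupes radiciels et les quasi-systèmes de Chevalley, de telle sorte que nos valuations $\varphi_a$ se transportent en celles usuelles sur $\bar{G}_i'$ ; comme M. Solleveld pose par définition $\ih_{\text{Sol}}(G_i', K_i') = \ih(\bar{G}_i', K_i')$, la coïncidence est immédiate. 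Pour $\text{BC}_n$, l'identification $\text{BC}_n(K_i') \cong \text{Sp}_{2n}(K_i'^{1/2})$, conjointement avec notre formule $\varphi_a(x_a(x,y)) = \frac{1}{2}\omega(\alpha x^2 + y)$, transporte notre donnée radicielle valuée en celle naturelle de $\text{Sp}_{2n}(K_i'^{1/2})$, d'où la coïncidence avec l'immeuble $\ih(\text{Sp}_{2n}, K_i'^{1/2})$ retenu par M. Solleveld.

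Pour \emph{l'unicité}, on observe que $G(K)$ opère transitivement sur les appartements de chacun des deux immeubles, puisque ses $K$-sous-tores déployés maximaux sont conjugués (\ref{proprietes generales des groupes pseudo-reductifs}), donc un isomorphisme $G(K)$-équivariant est entièrement déterminé par sa restriction à un appartement fixé. L'équivariance sous $N(K)$, dont l'image dans le groupe des transformations affines de l'appartement contient les réflexions du groupe de Weyl affine et les translations associées à $\nu: Z(K) \to V$, contraint alors cette restriction à respecter les hyperplans radiciels et l'action par translation ; en adjoignant la compatibilité à un choix de sommet spécial commun aux deux constructions, on obtient l'unicité.

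L'obstacle principal réside dans la vérification explicite du cas $\text{BC}_n$ (et, dans une moindre mesure, du cas exotique), où il faut s'assurer que nos conventions dans la définition des $\varphi_a$ pour les racines multipliables -- impliquant le choix de $\alpha \in K^{1/2} \setminus K$ -- produisent bien la valuation attendue sur $\text{Sp}_{2n}(K^{1/2})$ via l'identification rappelée. L'indépendance par rapport à $\alpha$ au sens des identifications $(x,y) \mapsto (rx, y+cx^2)$ rappelées dans \ref{classification des groupes pseudo-reductifs} garantit que l'ambiguïté résiduelle se réduit à une translation de l'origine, absorbée par le choix de quasi-système de Chevalley dans la définition de $\ah(G,S,K)$.
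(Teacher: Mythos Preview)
Votre stratégie globale --- réduction au revêtement modéré universel puis traitement des trois volets primitifs --- est bien celle du papier. Cependant, deux points méritent d'être signalés.

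D'abord, une erreur factuelle : vous affirmez que l'extension $K'/K$ de la présentation est \og décomposable en extensions finies séparables \fg{}. C'est faux en général --- c'est précisément l'existence de facteurs purement inséparables dans $K'$ qui produit des groupes pseudo-réductifs non réductifs (pensez à $\Res_{K^{1/p}/K}\text{SL}_2$). La compatibilité de l'immeuble avec $\Res_{K'/K}$ reste vraie, mais pour une raison différente : on a $(\Res_{K'/K}G')(K)=G'(K')$ tautologiquement, et les données radicielles valuées se correspondent. Le papier contourne cette discussion en invoquant directement l'isomorphisme $\ah(\widetilde{G},\widetilde{S},K)\cong\ah(G,S,K)$ établi plus haut, puis en observant qu'il s'étend équivariamment aux immeubles.

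Ensuite, votre traitement des cas exotique et $\text{BC}_n$ est trop elliptique. Dire que les valuations \og se transportent \fg{} ne suffit pas : dans le cas exotique, le papier donne la formule explicite $\bar{\psi}_{\bar{a}}(\bar{x}_{\bar a}(r))=[K_a:K]\,\psi_a(x_a(r^{1/[K_a:K]}))$, qui comporte un facteur de renormalisation non trivial lié à l'identification $X^*(S)\otimes\RR\cong X^*(\bar{S})\otimes\RR$. Surtout, pour passer de l'isomorphisme d'appartements à l'isomorphisme d'immeubles, il faut vérifier que les fixateurs $P_{\psi}$ et $P_{\bar{\psi}}$ coïncident comme sous-groupes de $G(K)=\bar{G}(K)$ --- c'est ce qui recolle les appartements de façon compatible, et vous ne le mentionnez pas. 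Pour l'unicité, le papier renvoie simplement au 4.2.12 de \cite{BTII}, qui encapsule votre argument de rigidité sous $N(K)$.
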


\begin{proof}
Rappelons la définition de l'immeuble $\ih_{\text{Sol}}(G,K)$. On prendre la donnée de présentation $(K'/K, G', Z', Z)$ de $G$ et l'on pose $ \ih(G, K):=\prod_{i\in I}\ih(\bar{G}'_i,\bar{K}'_i)$,
où $\bar{G}'_i$ est le groupe déployé sur $\bar{K}'_i$ ayant les mêmes points rationnels que $G'_i$, cf. les prop. 7.3.3 (2) et 9.9.4 (1) de \cite{CGP}. Le groupe $G(K)$ des points rationnels y opère en conjuguant l'action naturelle de $\widetilde{G}(K)=\prod \bar{G}'_i(\bar{K}'_i)$ et de $Z(K)\rightarrow Z^{\text{aut}}(K) \subseteq \bar{Z}^{\text{ad}}(\bar{G}'_i)$.

 Retournant à notre cadre, on avait déjà vu qu'il y a des isomorphismes canoniques $\ah(\widetilde{G}, \widetilde{S}, K) \cong \ah(G, S, K)$, respectant les opérations des groupes correspondants et l'on vérifie sans peine qu'ils se rassemblent en un isomorphisme équivariant $\ih(\widetilde{G}, K) \cong \ih(G, K)$. Il ne reste qu'à constater que, si $G$ est exotique basique et pseudo-déployé ou absolument non réduit, alors $\ih(G, K) \cong \ih(\bar{G}, K)$, ce qu'on pourrait déduire aisément de l'égalité de certaines $BN$-paires convenables dans $G(K)=\bar{G}(K)$, mais on va montrer cette affirmation pour convaincre le lecteur. Construisons d'abord la bijection entre $\ah(G, S, K)$ et $\ah(\bar{G}, \bar{S}, K) $ lorsque $G$ est exotique basique et pseudo-déployé : si $\psi$ est une valuation de la donnée radicielle abstraite $(Z(K), U_a(K))_{a \in \Phi}$ équipollente à $\varphi$, alors on considère la valuation $\bar{\psi}$ de $(\bar{S}(K), \bar{U}_{\bar{a}}(K))_{\bar{a} \in \bar{\Phi}}$ donnée par $\bar{\psi}_{\bar{a}}(\bar{x}_a(r))=[K_a:K]\psi_a(x_a(r^{\frac{1}{[K_a:K]}}))$. Il est immédiat que $\bar{\varphi}$ est la valuation canonique associée au système de Chevalley $\bar{x}_{\bar{a}}$ pour $\bar{a} \in \bar{\Phi}$ et aussi que, si l'on suppose que $\psi=\varphi+v$ avec $v \in X_*(S)\otimes \RR$, alors $\bar{\psi}_{\bar{a}}-\bar{\varphi}$ est constante égale à $[K_a:K]a(v)$, ce qui coïncide avec $\bar{a}(v)$ moyennant l'identification $X^*(S)\otimes \RR \cong X^*(\bar{S}) \otimes \RR$ (voir la prop. 7.1.5 de \cite{CGP}), d'où la bijection désirée entre les appartements qui commute avec les opérations de $N(K)=\bar{N}(K)$. De plus, on a aussi $P_{\psi}=P_{\bar{\psi}}$ en tant que sous-groupes de $G(K)=\bar{G}(K)$ et par conséquent un isomorphisme équivariant $\ih(G, K) \cong \ih(\bar{G}, K)$. Il en est de même des groupes absolument non réduits : l'isomorphisme $\ah(\text{BC}_n, S, K) \cong \ah(\text{Sp}_{2n}, D_n, K^{1/2})$ se définit ensemblistement en posant $\overline{\varphi}_{a}:=\varphi_a$ si $a$ n'est pas divisible dans $\Phi$ ou $2\varphi_{\frac{a}{2}}$ sinon ; on voit de même que les racines affines, les lois d'opération de $N(K)=\overline{N}(K^{1/2})$ et les sous-groupes parahoriques de $\text{BC}_n(K)=\text{Sp}_{2n}(K^{1/2})$ se correspondent, d'où l'identification $\ih(\text{BC}_n, K)\cong\ih(\text{Sp}_{2n}, K^{1/2})$.
 
 L'unicité de l'isomorphisme équivariant obtenu résulte du 4.2.12. de \cite{BTII}.
\end{proof}

\subsection{} \label{decompositions de bruhat et application de kottwitz}
 
 Dans ce paragraphe, on va signaler quelques faits combinatoires très requis lorsqu'on s'occupe de la théorie de Bruhat-Tits, comme les décompositions de Bruhat. Comme d'habitude, $K$ désignera un corps discrètement valué, hensélien, à anneau d'entiers excellent et à corps résiduel parfait.
 
 \begin{propn}
 Soient $G$ un $K$-groupe quasi-réductif et quasi-déployé et $\emph{\textbf{f}}$, $\emph{\textbf{f}}'$ deux facettes contenues dans un même appartement $\ah(G,S,K)$ de l'immeuble $\ih(G,K)$. Alors, on a un isomorphisme d'ensembles de doubles classes $P^?_{\emph{\textbf{f}}}\setminus G(K)/P^?_{\emph{\textbf{f}}'} \cong N^?_{\emph{\textbf{f}}}\setminus N(K)/N^?_{\emph{\textbf{f}}'}$, où $?=0,1,\hat{}, \dagger$.
 \end{propn}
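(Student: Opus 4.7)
Le plan consiste à réduire l'énoncé à la théorie générale des données radicielles valuées, développée dans \cite{BTI}, §7, appliquée à la donnée $(Z(K),U_a(K),\varphi)_{a\in\Phi}$ construite à la Proposition~\ref{verificacao da valorizacao}. L'inclusion $N(K)\hookrightarrow G(K)$ induit une application naturelle
$$\Psi^?\colon N^?_{\textbf{f}}\setminus N(K)/N^?_{\textbf{f}'}\longrightarrow P^?_{\textbf{f}}\setminus G(K)/P^?_{\textbf{f}'},$$
dont il s'agit de prouver la bijectivité pour chacune des quatre valeurs de $?$.

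Premièrement, on observera que chaque $P^?_{\textbf{f}}$ s'écrit comme le produit $N^?_{\textbf{f}}\cdot U_{\textbf{f}}$, où $U_{\textbf{f}}$ est le sous-groupe engendré par les $U_{a,f_{\textbf{f}}(a)}$ pour $a\in\Phi$, qui ne dépend pas du choix de la variante $?$. De plus, on a $P^?_{\textbf{f}}\cap N(K)=N^?_{\textbf{f}}$, l'inclusion non triviale découlant de $U_{\textbf{f}}\cap N(K)\subseteq N^0_{\textbf{f}}\subseteq N^?_{\textbf{f}}$, elle-même conséquence directe des axiomes d'une donnée radicielle valuée.

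Pour la surjectivité de $\Psi^?$, il suffit d'établir la décomposition de Bruhat mixte $G(K)=P^0_{\textbf{f}}\cdot N(K)\cdot P^0_{\textbf{f}'}$, valable \emph{a fortiori} avec $P^?$ à la place de $P^0$ puisque $P^0_{\textbf{f}}\subseteq P^?_{\textbf{f}}$. Cela n'est rien d'autre que le théorème 7.4.4 de \cite{BTI} appliqué aux deux fonctions concaves $f_{\textbf{f}}$ et $f_{\textbf{f}'}$, dont la concavité résulte immédiatement de leur définition comme infimums de fonctionnelles affines évaluées sur une facette. Pour l'injectivité, toute relation $p_1\, n_2\, p_2=n_1$ avec $p_i\in P^?_{\textbf{f}_i}$ et $n_1,n_2\in N(K)$ se réécrit sous la forme $u_1\cdot n_2 u_2 n_2^{-1}\cdot n_2=\nu_1^{-1}n_1\nu_2^{-1}$ avec $u_i\in U_{\textbf{f}_i}$ et $\nu_i\in N^?_{\textbf{f}_i}$ ; l'unicité dans la décomposition cellulaire de loc. cit. forcera alors $u_1\cdot n_2 u_2 n_2^{-1}$ à appartenir à $U_{\textbf{f}}\cap N(K)\subseteq N^0_{\textbf{f}}\subseteq N^?_{\textbf{f}}$, d'où $n_1\in N^?_{\textbf{f}}\cdot n_2\cdot N^?_{\textbf{f}'}$ comme voulu.

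Le principal obstacle consistera à vérifier soigneusement la décomposition produit $P^?_{\textbf{f}}=N^?_{\textbf{f}}\cdot U_{\textbf{f}}$ pour les quatre variantes $?=0,1,\hat{}\,,\dagger$, qui est une forme de la décomposition d'Iwahori, ainsi qu'à justifier que le cadre purement axiomatique du 7 de \cite{BTI} s'applique sans modifications à notre contexte quasi-réductif. Tout le reste est formel une fois que l'on dispose de la donnée radicielle valuée fournie par la Proposition~\ref{verificacao da valorizacao}.
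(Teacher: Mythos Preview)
Your sketch follows exactly the route the paper indicates (it merely cites \cite{BTI}, 7.3.4 and \cite{HR}, prop.~8): establish the product decomposition $P^?_{\textbf{f}}=N^?_{\textbf{f}}\cdot U_{\textbf{f}}$ together with $P^?_{\textbf{f}}\cap N(K)=N^?_{\textbf{f}}$, obtain surjectivity from the mixed Bruhat decomposition for valuated root data, and deduce injectivity from the uniqueness assertion therein.

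One small slip in the injectivity step: the element $u_1\cdot n_2 u_2 n_2^{-1}$ has no a priori reason to lie in $U_{\textbf{f}}$, since $n_2 u_2 n_2^{-1}$ belongs to $n_2 U_{\textbf{f}'} n_2^{-1}$, not to $U_{\textbf{f}}$. What the uniqueness in \cite{BTI} actually yields from the relation $u_1\, n_2\, u_2\in N(K)$ is that $u_1\in U_{\textbf{f}}\cap N(K)$ and $u_2\in U_{\textbf{f}'}\cap N(K)$ \emph{separately}; the desired conclusion $n_1\in N^?_{\textbf{f}}\, n_2\, N^?_{\textbf{f}'}$ then follows at once. With this correction your argument is the intended one.
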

 
 \begin{proof}
 On peut procéder comme dans \cite{BTI}, 7.3.4. ou \cite{HR}, prop. 8.
 \end{proof}
 
 Rappelons maintenant qu'on a une extension canonique de $W_{\text{af}}$ par $Z(K)_b$ en tant que sous-groupe $N_{\text{af}}^1$ de $N(K)$. D'ailleurs, si $G$ est pseudo-réductif, on trouve même un sous-groupe $N_{\text{af}}^0$ de $N(K)$ extension de $W_{\text{af}}$ par $\Zh^0(\oh)$.
 
 \begin{propn} 
 Gardons les notations antérieures et fixons une alcôve $\emph{\textbf{a}}$ de $\ah(G,S,K)$. Alors, $P^1_{\emph{\textbf{a}}}$ (resp. $P^0_{\emph{\textbf{a}}}$ si $G$ est pseudo-réductif) et $N_{\emph{af}}^1$ (resp. $N_{\emph{af}}^0$ si $G$ est pseudo-réductif) définissent une BN-paire de type $W_{\emph{af}}$ dans le sous-groupe de $G(K)$ qu'ils engendrent.
 \end{propn}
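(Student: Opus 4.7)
Je vérifie directement les axiomes d'un système de Tits pour le couple $(B,N) := (P^1_{\mathbf{a}}, N_{\text{af}}^1)$ (resp. $(P^0_{\mathbf{a}}, N_{\text{af}}^0)$ dans le cas pseudo-réductif) dans le sous-groupe $G(K)^\dagger := \langle B,N\rangle$ qu'ils engendrent, en suivant de près la méthodologie de \cite{BTI}, 6.5 et 7.4, adaptée à notre cadre quasi-réductif via la donnée radicielle valuée construite en \ref{verificacao da valorizacao}.

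L'intersection $H := B \cap N$ est précisément le noyau de l'opération de $N_{\text{af}}^1$ sur l'appartement $\ah(G,S,K)$, c'est-à-dire $Z(K)_b$ (resp. $\Zh^0(\oh)$), et elle est distinguée dans $N$ par définition de $N_{\text{af}}^?$. Le quotient $N/H$ s'identifie par construction à $W_{\text{af}}$, qui est un groupe de Coxeter engendré par les réflexions $r_i$ par rapport aux murs de l'alcôve $\mathbf{a}$ (c'est la théorie classique des groupes de réflexions affines, appliquée au pavage polysimplicial $\Sigma$) ; ceci établit (T2), tandis que (T1) est tautologique. Pour (T4), si $\dot s_i \in N$ relève une réflexion simple, alors $\dot s_i \mathbf{a}$ est une alcôve adjacente distincte de $\mathbf{a}$, donc $\dot s_i B \dot s_i^{-1}$ stabilise $\dot s_i \mathbf{a}$ mais pas $\mathbf{a}$, d'où $\dot s_i B \dot s_i^{-1}\neq B$.

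L'obstacle principal réside dans la vérification de l'axiome (T3) : $s_i B n B \subseteq B n B \cup B s_i n B$ pour tout $n \in N$ et toute réflexion simple $s_i$. Grâce à la décomposition $B = H \cdot U_{\mathbf{a}}$, où $U_{\mathbf{a}}$ est engendré par les $U_{a, f_{\mathbf{a}}(a)}$, et à la description combinatoire des sous-groupes radiciels valués, ceci se ramène à montrer $\dot s_i \cdot U_{-\alpha_i}(K) \subseteq B \cup B \dot s_i B$ pour $\alpha_i$ la racine affine simple attachée à $s_i$. Soit $u \in U_{-\alpha_i}(K)\setminus\{1\}$ : si $\varphi_{-\alpha_i}(u)\geq f_{\mathbf{a}}(-\alpha_i)$, alors $u \in B$ et l'inclusion est immédiate ; sinon, l'axiome (V 5) de la valuation $\varphi$ fournit $m(u) = u' u u''$ avec $u',u'' \in U_{\alpha_i}(K)$ tels que $\varphi_{\alpha_i}(u'), \varphi_{\alpha_i}(u'')\geq -\varphi_{-\alpha_i}(u) > f_{\mathbf{a}}(\alpha_i)$, donc $u',u'' \in U_{\mathbf{a}} \subseteq B$. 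Comme $m(u)$ projette sur $s_i$ dans $N/H$, il s'ensuit $\dot s_i u \in B \dot s_i B$.

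La décomposition de Bruhat annoncée $G(K)^\dagger = \bigsqcup_{w\in W_{\text{af}}} B \dot w B$ résulte alors formellement des axiomes vérifiés, mais on peut aussi l'obtenir directement à partir de la proposition précédente appliquée à $\mathbf{f} = \mathbf{f}' = \mathbf{a}$, ce qui fournit $B \backslash G(K)^\dagger / B \cong N_{\text{af}}^? / H = W_{\text{af}}$ et achève la démonstration.
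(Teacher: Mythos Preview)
Your overall strategy --- verifying the Tits axioms directly from the valuation axioms, in the spirit of \cite{BTI}, 6.5 --- is legitimate and in fact more self-contained than the paper's route, which invokes the later th.~\ref{th. paraboliques et parahoriques} to obtain the decomposition $P^0_{\textbf{f}_i}=P^0_{\textbf{a}}N^0_{\textbf{f}_i}U_{\alpha_i}$ and reads off (T3) from that. So the two approaches genuinely differ; yours avoids a forward reference, the paper's is shorter once the parabolic/parahoric dictionary is in hand.

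That said, your treatment of (T3) has a real gap. First, the reduction ``ceci se ramène à montrer $\dot s_i\cdot U_{-\alpha_i}(K)\subseteq B\cup B\dot s_iB$'' is neither stated precisely (you seem to conflate the affine root $\alpha_i$ with its gradient $a\in\Phi$) nor justified: (T3) concerns $s_iBnB$ for arbitrary $n$, and the passage from there to an assertion about a single root group needs the product decomposition of $U_{\textbf{a}}$ together with the observation that $\dot s_i$ already normalises all factors except the one in direction $\pm a$. Second, and more seriously, the claim ``$m(u)$ projette sur $s_i$ dans $N/H$'' is false as written. For $u\in U_{-a}(K)$ with $\varphi_{-a}(u)<f_{\textbf{a}}(-a)$, the element $m(u)$ induces the reflection across the hyperplane $\partial\alpha_{-a,\varphi_{-a}(u)}$, which is the wall $\textbf{f}_i$ of $\textbf{a}$ only when $\varphi_{-a}(u)=-f_{\textbf{a}}(a)$; for smaller values of $\varphi_{-a}(u)$ one obtains a different reflection in $W_{\text{af}}$. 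What rescues the argument is that the relevant $u$ --- those coming from $\dot s_i U_{a,f_{\textbf{a}}(a)}\dot s_i^{-1}\subseteq U_{-a,-f_{\textbf{a}}(a)}$ --- satisfy $\varphi_{-a}(u)\geq -f_{\textbf{a}}(a)$, and the discreteness of $\Gamma_{-a}'$ combined with the alcove condition then forces equality in the non-trivial case. You need to make this step explicit. Finally, even granting $m(u)\in H\dot s_i$, you should show how to go from $u=u'^{-1}m(u)u''^{-1}\in B\dot s_iB$ to the desired inclusion; as written, the conclusion ``$\dot s_i u\in B\dot s_iB$'' does not follow.
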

 
 \begin{proof}
 On va établir l'axiome (T 3) des systèmes de Tits de \cite{BTI}, 1.2.6. et on laisse au lecteur le soin d'en compléter. Comme on verra plus tard sans aucune circularité échéante, cf. th. \ref{th. paraboliques et parahoriques}, le sous-groupe parahorique $P^0_{\textbf{f}_i}$ s'écrit comme $P^0_{\textbf{a}}N^0_{\textbf{f}_i}U_{\alpha_i}$, où l'on note $\textbf{f}_i$ une cloison de l'alcôve $\textbf{a}$, $s_i$ la réflexion correspondante et $\alpha_i$ la seule racine affine contenant $\textbf{a}$ telle que $\partial \alpha_i=\textbf{f}_i$. Évidemment il en est de même de $P_{\textbf{f}_i}^1$ et l'on a $P^1_{\textbf{f}_i}w  \subseteq P^1_{\textbf{a}}wP^1_{\textbf{a}} \cup P^1_{\textbf{a}}s_i wP^1_{\textbf{a}}$, ce qu'on vérifie facilement. En effet, on peut substituer $s_iw$ à $w$ si nécessaire sans rien changer, pour que $w^{-1}\alpha_i \supseteq \textbf{a}$, ce qui rende les calculs triviaux. 
 \end{proof}
 
 On note $G(K)^0$ le groupe engendré par les fixateurs connexes $P_{\textbf{f}}^0$ de $G(K)$. Dans le cas pseudo-réductif on obtient la proposition suivante, cf. \cite{HR} :
 
 \begin{propn}\label{parahoriques a la kottwitz}
 Étant donné un groupe pseudo-réductif quasi-déployé $G$ sur $K$ et une facette $\emph{\textbf{f}}$ de $\ih(G,K)$, on a $P_{\emph{\textbf{f}}}\cap G(K)^0=P^0_{\emph{\textbf{f}}}$.
 \end{propn}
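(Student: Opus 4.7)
L'inclusion $P^0_{\textbf{f}} \subseteq P_{\textbf{f}} \cap G(K)^0$ étant tautologique, il s'agit de la réciproque, pour laquelle on adapte la méthode de \cite{HR}. Comme $N_{\textbf{f}}$ normalise $U_{\textbf{f}}$ (son action affine préservant l'échelonnage associé à $\textbf{f}$), on dispose de la décomposition $P_{\textbf{f}} = U_{\textbf{f}} \cdot N_{\textbf{f}}$. Puisque $U_{\textbf{f}} \subseteq P^0_{\textbf{f}} \subseteq G(K)^0$, un élément $g = un \in P_{\textbf{f}} \cap G(K)^0$ fournit $n \in N_{\textbf{f}} \cap G(K)^0$ ; la proposition se ramène donc à vérifier $N_{\textbf{f}} \cap G(K)^0 = N^0_{\textbf{f}}$.

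Fixons une alcôve $\textbf{a}$ telle que $\textbf{f} \preceq \bar{\textbf{a}}$. L'alcôve étant ouverte, on a $f_{\textbf{a}}(a) + f_{\textbf{a}}(-a) > 0$ pour chaque racine (puisqu'une réflexion $m(u)$ avec $u \in U_a^*$ vérifie $\varphi_a(u) + \varphi_{-a}(u') = 0$), d'où $U_{\textbf{a}} \cap N(K) = 1$ et $N^0_{\textbf{a}} = \Zh^0(\oh)$ ; la proposition précédente fournit alors la décomposition d'Iwahori-Bruhat $G(K) = \bigsqcup_{w \in \widetilde{W}} P^0_{\textbf{a}} n_w P^0_{\textbf{a}}$. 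Pour tout $n \in N(K)$ d'image $\tilde{w} \in \widetilde{W}$, l'égalité $n = (n n_{\tilde{w}}^{-1}) n_{\tilde{w}}$ avec $n n_{\tilde{w}}^{-1} \in \Zh^0(\oh) \subseteq P^0_{\textbf{a}}$ montre que $n$ appartient à la double-classe indexée par $\tilde{w}$, d'où $P^0_{\textbf{a}} n_w P^0_{\textbf{a}} \cap N(K) = \Zh^0(\oh) n_w$ par unicité. En adaptant au cadre pseudo-réductif la construction de l'application de Kottwitz, qui dans le cas réductif fournit un homomorphisme surjectif $\kappa_G: G(K) \twoheadrightarrow \widetilde{W}/W_{\text{af}}$ de noyau $G(K)^0$, on en tire que $n \in N(K) \cap G(K)^0$ force $\tilde{w} \in W_{\text{af}}$, d'où $n \in \Zh^0(\oh) n_{\tilde{w}} \subseteq N^0_{\text{af}}$. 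Ainsi $N(K) \cap G(K)^0 = N^0_{\text{af}}$.

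Pour conclure, on applique la projection $\pi: N(K) \twoheadrightarrow \widetilde{W}$ aux sous-groupes $N^0_{\textbf{f}} \subseteq N_{\textbf{f}} \cap N^0_{\text{af}}$, obtenant les suites exactes
\begin{align*}
 & 1 \to \Zh^0(\oh) \to N^0_{\textbf{f}} \to W_{\textbf{f}}^{\text{af}} \to 1, \\
 & 1 \to \Zh^0(\oh) \to N_{\textbf{f}} \cap N^0_{\text{af}} \to W_{\text{af}} \cap \widetilde{W}_{\textbf{f}} \to 1,
\end{align*}
où $W_{\textbf{f}}^{\text{af}}$ et $\widetilde{W}_{\textbf{f}}$ désignent les stabilisateurs de $\textbf{f}$ respectivement dans $W_{\text{af}}$ et $\widetilde{W}$. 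La théorie des groupes de Coxeter affines assure que tout élément de $W_{\text{af}}$ fixant $\textbf{f}$ s'écrit comme produit de réflexions fixant $\textbf{f}$, d'où $W_{\textbf{f}}^{\text{af}} = W_{\text{af}} \cap \widetilde{W}_{\textbf{f}}$ ; le lemme des cinq livre l'égalité $N^0_{\textbf{f}} = N_{\textbf{f}} \cap N^0_{\text{af}}$, ce qui conclut. L'obstacle principal est l'adaptation de l'application de Kottwitz au cadre pseudo-réductif, ou de façon équivalente, la vérification directe de l'injectivité du morphisme naturel $\widetilde{W}/W_{\text{af}} \hookrightarrow G(K)/G(K)^0$ ; on peut s'y ramener par le théorème de classification en traitant séparément les facteurs primitifs, et le cas simplement connexe est déjà bien connu.
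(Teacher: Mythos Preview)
Votre stratégie diffère de celle de l'article et contient une lacune que vous signalez vous-même. Le point crucial est l'injectivité de $\widetilde{W}/W_{\text{af}} \hookrightarrow G(K)/G(K)^0$, que vous invoquez via une \og application de Kottwitz \fg{} sans la construire : or cette injectivité est essentiellement équivalente à l'énoncé que vous cherchez à démontrer (elle dit que $N(K)\cap G(K)^0$ se projette dans $W_{\text{af}}$, ce qui est le cœur de votre réduction). La suggestion finale de \og traiter séparément les facteurs primitifs \fg{} est raisonnable mais non effectuée, et il n'est pas clair comment cette réduction s'articule proprement avec le formalisme de Kottwitz, d'autant que le Cartan $Z$ n'est pas un tore dans le cadre pseudo-réductif.

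L'article procède de façon plus directe et évite entièrement la machinerie de Kottwitz. Il exploite la présentation $G=(\widetilde{G}\rtimes Z)/\widetilde{Z}$ pour observer que $G(K)^0$ est engendré par $\Zh^0(\oh)$ et l'image de $\widetilde{G}(K)$ ; tout $g\in P_{\textbf{f}}\cap G(K)^0$ s'écrit alors $g=[\widetilde{g}]z$ avec $\widetilde{g}\in\widetilde{G}(K)$ et $z\in\Zh^0(\oh)$, de sorte que $[\widetilde{g}]=gz^{-1}$ stabilise $\textbf{f}$. Comme $\widetilde{G}$ est modérément universel, on a $\widetilde{G}(K)^0=\widetilde{G}(K)$ et $\widetilde{P}_{\textbf{f}}=\widetilde{P}^0_{\textbf{f}}$ (cas \og simplement connexe \fg{}, th.~6.5.1 de \cite{BTI}), d'où $\widetilde{g}\in \widetilde{P}^0_{\textbf{f}}$ et $g\in P^0_{\textbf{f}}$. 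Cette approche court-circuite votre réduction aux $N$-groupes et la question de l'application de Kottwitz, en ramenant tout de suite au revêtement modérément universel où l'égalité $P_{\textbf{f}}=P^0_{\textbf{f}}$ est acquise.
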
 
 
 \begin{proof}
 Notons d'abord que si $G=\widetilde{G}$ est modérément universel, alors la première affirmation est bien connue et résulte du th. 6.5.1 de \cite{BTI}, puisque $G(K)^0=G(K)$ et $P_{\textbf{f}}=P^0_{\textbf{f}}$ dans le cas \og simplement connexe \fg{}. En général, on considère l'écriture de $G=(\widetilde{G} \rtimes Z)/\widetilde{Z}$ et l'on observe que $G(K)^0$ est engendré par $\Zh(\oh)$ et l'image de $\widetilde{G}(K)$. Or, si $g \in P_{\textbf{f}}\cap G(K)^0$, on peut écrire $g=[\widetilde{g}]z$, avec $\widetilde{g} \in \widetilde{G}(K)$ et $z \in \Zh(\oh)$. Par hypothèse, $[\widetilde{g}]=gz^{-1}$ stabilise $\textbf{f}$, d'où l'appartenance de $\widetilde{g}$ à $P_{\textbf{f}}^0$ d'après le cas précédent. 
 \end{proof}
 Prenons enfin l'opportunité pour énoncer une conséquence schématique de ce qui précède, en termes des schémas en groupes immobiliers construits dans la suite (sans en recourir).
 \begin{cor}
 Supposons $\kappa$ algébriquement clos et soit $\gh$ est un modèle lisse de $G$ tel que sa composante neutre $\gh^0=\gh^0_{\Omega}$ soit immobilière, alors $\gh$ s'identifie à un sous-$\oh$-groupe ouvert de $\gh_{\Omega}^{\dagger}$.
 \end{cor}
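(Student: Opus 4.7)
Le plan s'articule en trois étapes.

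Premièrement, j'établirais l'inclusion ensembliste $\gh(\oh)\subseteq P_{\Omega}^{\dagger}$. Puisque $\gh$ est lisse, affine et de type fini, le groupe $\gh(\oh)$ est borné dans $G(K)$. De plus, $\gh^0(\oh)=\gh^0_{\Omega}(\oh)=P^0_{\Omega}$ est un sous-groupe distingué de $\gh(\oh)$ (d'indice fini, car $\kappa$ est algébriquement clos), donc tout élément de $\gh(\oh)$ normalise le parahorique $P^0_{\Omega}$ ; la caractérisation de Bruhat-Tits du normalisateur d'un tel parahorique (à établir à son tour) entraîne que $\gh(\oh)$ stabilise l'enclos de $\Omega$, d'où l'inclusion.

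Deuxièmement, j'en déduirais un $\oh$-morphisme de schémas en groupes $\gh\rightarrow \gh_{\Omega}^{\dagger}$ prolongeant l'identité sur les fibres génériques. Ceci résultera de la propriété universelle (de type Néron) du modèle stabilisateur $\gh_{\Omega}^{\dagger}$ comme plus grand modèle lisse et affine de $G$ dont les $\oh^{\text{nr}}$-points sont contenus dans $P_{\Omega}^{\dagger}$, à établir dans la section qui suit : l'hypothèse $\kappa=\bar{\kappa}$ donne $\oh^{\text{nr}}=\oh$, de sorte que l'étape précédente fournit la condition $\gh(\oh^{\text{nr}})\subseteq P_{\Omega}^{\dagger}$ et partant l'unique extension cherchée.

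Finalement, je montrerais que le morphisme ainsi obtenu est une immersion ouverte. Il induit un isomorphisme sur les fibres génériques, ainsi qu'un isomorphisme $\gh^0\cong (\gh_{\Omega}^{\dagger})^0=\gh^0_{\Omega}$ sur les composantes neutres, par hypothèse. L'application induite sur les groupes de composantes des fibres spéciales s'identifie, grâce à $\kappa=\bar{\kappa}$, à l'inclusion naturelle $\gh(\oh)/P^0_{\Omega}\hookrightarrow P_{\Omega}^{\dagger}/P^0_{\Omega}$, qui est évidemment injective. Par conséquent, le morphisme est étale et radicial, donc une immersion ouverte.

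Le principal obstacle réside dans la deuxième étape, qui dépend de la propriété universelle du modèle stabilisateur $\gh_{\Omega}^{\dagger}$ à construire plus tard dans le texte, et de la description combinatoire du normalisateur du parahorique invoquée à la première étape.
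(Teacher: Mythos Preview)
Your outline is sound, and steps~2--3 (producing the morphism via the universal property of $\gh_{\Omega}^{\dagger}$ and checking it is an open immersion by comparing component groups) make explicit what the paper's proof leaves tacit after its \og il suffit de montrer que $\gh(\oh)$ le stabilise\fg. The substantive difference lies in step~1.

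You argue that $\gh(\oh)$ normalizes $P^0_{\Omega}$ and then invoke a \og caractérisation du normalisateur d'un tel parahorique\fg. Two remarks. First, $P^0_{\Omega}$ is a parahoric only when $\Omega$ is a facet; for general bounded $\Omega$ the statement you need (the normalizer of $P^0_{\Omega}$ among bounded elements stabilizes the enclosure of $\Omega$) amounts to knowing that the fixed-point set of $P^0_{\Omega}$ in the \emph{entire building} coincides with the enclosure of $\Omega$, which is true but not entirely formal. Second, the paper avoids this altogether by a Hensel trick: since maximal split tori of $\gh^0_s$ are $\gh^0_s(\kappa)$-conjugate and the relevant transporter scheme is smooth, one obtains a decomposition $\gh(\oh)=\Nh(\oh)\,P^0_{\Omega}$, where $\Nh$ denotes the normalizer of $\sh$ in $\gh$. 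An element of $\Nh(\oh)\subseteq N(K)$ then acts on the \emph{single apartment} $\ah(G,S,K)$ and, normalizing $P^0_{\Omega}$, must permute its fixed locus there, which is just $\Omega$ (after first reducing to $\Omega$ closed and equal to its enclosure). Your normalizer characterization is thus replaced by a direct reduction to $N(K)$, so that only the apartment-level fixed set is needed.

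Both routes are valid; yours abstracts the key point into a lemma you flag as still to be established, while the paper's is a short self-contained application of Hensel together with apartment combinatorics.
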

 \begin{proof}
 On peut supposer $\Omega$ clos et fermé et il suffit de montrer que $\gh(\oh) $ le stabilise. Notons que, d'après la conjugaison des tores déployés maximaux et le lemme de Hensel, l'on a $\gh(\oh)=\Nh(\oh)P^0_{\Omega}$, où $\Nh$ désigne le normalisateur de $S$ dans $\gh$. Vu que la partie des points fixés dans $\ah(G,S, K)$ par $P^0_{\Omega}$ coïncide avec $\Omega$ par hypothèse, l'affirmation en découle.
 \end{proof}
 
 \section{Modèles en groupes immobiliers}\label{section geometrique}
 
 Cette section est consacrée à la construction des modèles en groupes attachés aux parties non vides et bornées $\Omega$ de n'importe quel appartement de l'immeuble $\ih(G,K)$ du groupe quasi-réductif $G$ sur le corps fixé $K$. À la fin, on en déduit la descente étale de la théorie de Bruhat-Tits, qui permet de passer du cas quasi-déployé au cas général.
\subsection{} \label{donnees radicielles schematiques}
Dans ce paragraphe on va éclaircir le concept des données radicielles schématiques dû à F. Bruhat et J. Tits, cf. la déf. 3.1.1. de \cite{BTII}, dont on fera usage pour construire les modèles en groupes immobiliers de $G$. Ce paragraphe ressemble le 2.4 de \cite{LouGr}, à la différence près que, dans le présent article, la base est un anneau de valuation discrète.

\begin{defn}[F. Bruhat et J. Tits]
Soient $\oh$ un anneau de valuation discrète, $K$ son corps de fractions, $G$ un $K$-groupe quasi-réductif et $S$ un sous-tore déployé maximal. Une donnée radicielle dans $G$ par rapport à $S$ au-dessus de $\oh$ est la donnée de $\oh$-modèles en groupes lisses, affines et connexes $\Zh$ de $Z$ (resp. $\uh_a$ de $U_a$, où $a \in \Phi_{\text{nd}}$ est non divisible) tels que :
\begin{itemize}[leftmargin=1.5cm]
\item[(DRS 0)] l'injection de $S$ dans $Z$ se prolonge en un morphisme $\sh \rightarrow \Zh$, où $\sh$ désigne le seul groupe diagonalisable à isomorphisme unique près modelant $S$.
\item[(DRS 1)] l'application conjugaison $Z \times U_a \rightarrow U_a$ se prolonge en une application $\Zh \times \uh_a \rightarrow \uh_a$.
\item[(DRS 2)] si $b \neq -a$, l'application commutateur $U_a \times U_b \rightarrow \prod_{c=pa+qb \in \Phi_{\text{nd}}, (p,q) \in \QQ_{>0}^2} U_c$ se prolonge en un morphisme $\uh_a \times \uh_b \rightarrow \prod_{c=pa+qb \in \Phi_{\text{nd}}, (p,q) \in \QQ_{>0}^2} \uh_c$.
\item[(DRS 3)] si on note $W_a$ l'ouvert $U_a \times U_{-a} \cap U_{-a} \times Z \times U_a$ dans $U_a \times U_{-a}$, alors il existe un voisinage ouvert $\wh_a$ de $\uh_a \times \uh_{-a}$ contenant la section unité et modelant $W_a$ tel que l'inclusion $W_a \rightarrow U_{-a}\times Z \times U_a$ se prolonge en une application $\wh_a \rightarrow \uh_{-a} \times \Zh \times \uh_a$.
\end{itemize}
\end{defn} 

Le lecteur aura déjà compris que le but de cette définition est de construire, par \og recollement fermé \fg{} de $\Zh$ et des $\uh_a$, $a \in \Phi_{\text{nd}}$, un certain schéma en groupes $\gh$ au-dessus de $\oh$ avec de bonnes propriétés. Le §3 de \cite{BTII} traite de prouver un tel résultat à l'aide de représentations linéaires entières et fidèles, en exploitant la théorie des représentations de plus grand poids des groupes réductifs connexes (voir \cite{TitsRep}), ce qui n'est pas bien entendu pour les groupes quasi-réductifs. Cependant comme les auteurs leur-mêmes avaient remarqué au 3.1.7. de \cite{BTII}, la méthode de preuve indiquée pour le problème en question consisterait à appliquer la théorie des lois birationnelles du \cite{SGA3}, exp. XVIII, lorsqu'on aura un énoncé du th. 3.7 de \cite{BTII} dans le monde des schémas et pas seulement des espaces algébriques. Entre-temps ceci a été rendu possible par S. Bosch, W. Lütkebohmert et M. Raynaud, voir le th. 6.6.1. de \cite{BLR}, ce qu'on fera dans la suite, en simplifiant quelques idées de \cite{LndvCpc}, §5 et 6.

\begin{thm}[F. Bruhat et J. Tits, à peu près]\label{theoreme d'existence et unicite des groupes attaches aux drs}
Reprenons les notations de la définition précédente. Alors il existe un et un seul $\oh$-groupe affine, lisse et connexe $\gh$ modelant $G$ tel que :
\begin{enumerate}
\item les inclusions $Z \rightarrow G$ et $U_a \rightarrow G$ pour tout $a \in \Phi_{\emph{nd}}$ se prolongent en des isomorphismes de $\Zh$ et $\uh_a$ sur des sous-$\oh$-groupes fermés de $\gh$.
\item pour tout système de racines positives $\Phi^+$ et tout ordre mis sur $\Phi^+$, le morphisme $\prod_{a \in \Phi_{\emph{nd}}^+} \uh_a \rightarrow \gh$ soit un isomorphisme sur un sous-$\oh$-groupe fermé $\uh^+$ de $\gh$.
\item l'application produit $\uh^-\times \Zh \times \uh^+ \rightarrow \gh$ définisse un ouvert du membre de droite.
\end{enumerate}
\end{thm}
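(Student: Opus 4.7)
La stratégie que je propose suit fidèlement l'indication fournie par les auteurs après la définition des données radicielles schématiques : on construira d'abord la grosse cellule $\Omega:=\uh^- \times \Zh \times \uh^+$ comme $\oh$-schéma affine, plat et lisse, on l'équipera d'une loi birationnelle stricte de groupe extension de la multiplication sur la cellule ouverte de Bruhat de $G$, puis on appliquera le théorème 6.6.1 de \cite{BLR}, qui fournit un $\oh$-schéma en groupes lisse et séparé $\gh$ dans lequel $\Omega$ se plonge comme ouvert schématiquement dense.

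Premier pas : on fixera un système de racines positives $\Phi^+$ et un ordre total sur $\Phi_{\text{nd}}^+$, puis on munira le $\oh$-schéma $\prod_{a \in \Phi_{\text{nd}}^+} \uh_a$ d'une structure de groupe en itérant les morphismes de commutation fournis par (DRS 2), conformément à la démarche classique pour les groupes réductifs (cf.\ \cite{SGA3}, exp.\ XXII). L'indépendance du résultat par rapport à l'ordre choisi, à isomorphisme unique près, se vérifiera par récurrence sur le cardinal des parties closes de $\Phi_{\text{nd}}^+$. Le $\oh$-groupe $\uh^+$ résultant sera affine, lisse et connexe; on procédera symétriquement pour $\uh^-$, d'où la grosse cellule $\Omega$.

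Le pivot technique sera la construction de la loi birationnelle stricte $m:\Omega\times\Omega \dashrightarrow \Omega$, extension de la multiplication de $G$. L'ingrédient décisif est l'axiome (DRS 3) : l'ouvert $\wh_a$ permet d'exprimer birationnellement tout produit du type $u_{-a}u_a$ dans la forme $u_a' z u_{-a}'$, pour toute racine non divisible $a$. Combiné avec (DRS 1) pour la conjugaison par $\Zh$ et (DRS 2) pour le réarrangement des facteurs, cela permet, par récurrence sur la hauteur, de faire traverser à $\Zh$ les facteurs positifs d'un produit pour les amener au-delà des facteurs négatifs. Les axiomes d'une loi birationnelle stricte au sens de \cite{SGA3}, exp.\ XVIII, découleront de leur validité sur la fibre générique $G$, en vertu de la densité schématique de $\Omega$ dans $\gh$ et de la platitude sur $\oh$.

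Une fois $\gh$ obtenu par l'application du théorème d'extension, les conditions (1)--(3) s'ensuivront directement du plongement ouvert $\Omega\hookrightarrow \gh$ et de la construction de $\Omega$. L'obstacle que je prévois comme principal sera l'affinité de $\gh$, qui n'est pas garantie par l'énoncé de \cite{BLR} : je propose de l'aborder en adaptant la méthode de \cite{LndvCpc}, §6, c'est-à-dire en fabriquant à partir des représentations fidèles des $\uh_a$ et de $\Zh$ une représentation $\oh$-linéaire fidèle de $\gh$ sur un module libre de type fini, dont l'existence entraîne l'affinité. Quant à l'unicité, elle se déduira du théorème de Weil sur le prolongement des homomorphismes rationnels entre schémas en groupes lisses (cf.\ \cite{BLR}, 4.4.1) appliqué à l'identité sur $\Omega$, qui s'étend en un isomorphisme entre deux modèles candidats.
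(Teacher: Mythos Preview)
Votre stratégie est essentiellement celle de l'article : structure de groupe sur $\uh^+$ via (DRS~2), loi birationnelle sur la grosse cellule via (DRS~1)--(DRS~3), puis application du th.~6.6.1 de \cite{BLR}. Deux différences méritent cependant d'être signalées.

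D'abord, l'affinité de $\gh$, que vous anticipez comme l'obstacle principal et proposez de traiter par construction d'une représentation fidèle à la Landvogt, est en fait réglée en une ligne dans l'article : un $\oh$-groupe lisse, séparé, connexe, à fibre générique affine est automatiquement affine (prop.~12.9 de \cite{SGA3}). Cela évite tout recours aux représentations.

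Ensuite, et c'est une petite lacune dans votre esquisse, les assertions (1) et (2) ne \og s'ensuivent \fg{} pas directement du plongement ouvert $\Omega \hookrightarrow \gh$ : les sous-schémas $\Zh$, $\uh_a$, $\uh^+$ sont fermés dans $\Omega$, donc seulement \emph{localement fermés} dans $\gh$, et il faut encore montrer qu'ils y sont fermés. L'article le fait par descente fidèlement plate : l'image réciproque de $\Zh\uh^+$ par le morphisme produit $\ch^- \times \ch^+ \rightarrow \gh$ (qui est un revêtement fppf, les deux grosses cellules opposées recouvrant $\gh$) est définie par une condition fermée explicite. Votre preuve est correcte modulo ce complément.
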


\begin{proof}
Lorsqu'on se donne un ordre grignotant sur $\Phi_{\text{nd}}^+$, le 3.3. de \cite{BTII} fournit une loi de groupe dans le produit $\uh^+=\prod_{a \in \Phi_{\text{nd}}^+}\uh_{a}$, en argumentant par récurrence sur la longueur des parties positivement closes $\Psi$ de $\Phi_{\text{nd}}^+$ et en appliquant la règle (DRS 2). Notons également que $\Zh$ opère sur $\uh$ canoniquement d'après la règle (DRS 1), dont la restriction au sous-groupe fermé $\sh$ (voir le cor. 2.5. et le th. 6.8. de \cite{SGA3}, exp. IX) opère sur $\uh_a$ par le poids $a$. Comme les groupes trouvés sont toujours lisses et connexes, le fait que $\uh^+$ s'écrit comme produit de ses sous-groupes fermés $\uh_{a}$ rangés dans un ordre quelconque résulte du théorème principal de Zariski et la même indépendance de l'ordre au-dessus des corps. 

Pour construire $\gh$ satisfaisant à la troisième propriété, il suffit de construire une loi birationnelle sur $\ch_{\Phi^+}:=\uh^-\times \Zh \times \uh^+$ pour un choix quelconque $\Phi^+$ de racines positives, grâce au th. 6.6.1 de \cite{BLR}. En effet, on n'a pas besoin de remplacer la grosse cellule par un ouvert schématiquement dense relativement à $\oh$ d'après 5.1 de \cite{BLR}. Il suffit de définir une application birationnelle $\ch_{\Phi_+} \dashrightarrow \ch_{\Phi_-}$ prolongeant l'identité de $G$ dans la fibre générique, car cela permettra d'obtenir des applications rationnelles de produit et d'inversion dans $\ch_{\Phi^+}$, en commutant tous les facteurs et en multipliant ou inversant lorsque convenable. Montrons plus généralement la même assertion pour chaque paire de systèmes de racines positives $\Phi^+$ et $\Phi_1^+$ par récurrence sur le cardinal de $ \Phi_{\text{nd}}^+ \cap \Phi_{1, \text{nd}}^- $. Si cette intersection ne contient que la racine non divisible $a$, alors l'axiome (DRS 3) en fournit un morphisme birationnel $$\ch_{\Phi^+}=\uh_{\Phi_{\text{nd}}^-\setminus \{-a\}}\times \uh_{-a} \times \Zh \times \uh_a \times \uh_{\Phi_{\text{nd}}^+\setminus \{a\}} \dashrightarrow \uh_{\Phi_{\text{nd}}^-\setminus \{-a\}}\times \uh_{a} \times \Zh \times \uh_{-a}\times  \uh_{\Phi_{\text{nd}}^+\setminus \{a\}}=\ch_{\Phi_1^+}.$$ Ceci montre aussi que le groupe obtenu est indépendant du système de racines positives $\Phi^+$ choisi.

Maintenant on dispose du $\oh$-groupe lisse, séparé et connexe $\gh$ (donc affine grâce à la prop. 12.9 de \cite{SGA3}) qui modèle $G$ et qui contient les différents $\ch_{\Phi^+}$ en tant qu'ouverts, et il faut encore voir que les sous-groupes localement fermés $\Zh$ et $\uh^+$ sont effectivement fermés. Or l'image réciproque de $\Zh\uh^+:=\Zh \ltimes \uh^+$ par le revêtement fidèlement plat $\ch^- \times \ch^+ \rightarrow \gh$ est déterminée par la condition fermée $b_1b_2=1$, où $(a_1,b_1, b_2, a_2) \in \Zh\uh^+ \times (\uh^-)^2 \times \Zh \uh^+$, ce qui achève notre affirmation par descente fidèlement plate.
\end{proof}
 
\subsection{}\label{schemas en groupes immobiliers}

Enfin nous allons construire les $\oh$-modèles en groupes $\gh_{\Omega}$ d'un $K$-groupe quasi-réductif quasi-déployé $G$ associés aux parties bornées d'un certain appartement de $\ih(G,K)$. On construit d'abord le $\oh$-modèle $\Zh$ de $Z$ à l'aide de la théorie des modèles de Néron (localement de type fini) de \cite{BLR}, chap. 10.

\begin{propn}[S. Bosch, W. Lütkebohmert et M. Raynaud, à peu près] \label{modele de neron}
Soient $K$ un corps discrètement valué, $\oh$ son anneau d'entiers supposé hensélien et excellent, et $\kappa$ son corps résiduel supposé parfait. Étant donné un $K$-groupe quasi-réductif et nilpotent $Z$, alors il possède un modèle de Néron $\Zh$ sur $\oh$, dont le sous-groupe ouvert maximal de type fini $\Zh^1$ représente $Z(K)_b=\Zh^1(\oh)$.
\end{propn}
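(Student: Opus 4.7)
Le plan est de se ramener au cas commutatif par dévissage le long de la série centrale descendante de $Z$, puis d'invoquer la construction de \cite{BLR}. Comme $Z$ est quasi-réductif et nilpotent, ses sous-groupes dérivés restent quasi-réductifs (argument analogue à \cite{CGP} 1.2.6, en notant que le radical unipotent déployé $R_{ud,K}$ est stable par passage aux sous-groupes normaux caractéristiques), de sorte que les quotients centraux successifs $Z^{(i)}/Z^{(i+1)}$ sont commutatifs et quasi-réductifs. On construira $\Zh$ par récurrence sur la classe de nilpotence, en assemblant à chaque étape un $\oh$-groupe lisse et affine à partir des modèles de Néron des termes d'une extension centrale $1 \to Z^{(i+1)} \to Z^{(i)} \to Z^{(i)}/Z^{(i+1)} \to 1$.

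Le cas commutatif est traité dans \cite{BLR}, \S 10.1--10.2, par combinaison des résultats sur les tores (qui admettent un modèle de Néron localement de type fini) et sur les groupes unipotents ployés (dont les points rationnels sont bornés d'après \cite{Oes}, d'où un modèle de Néron de type fini) ; un $K$-groupe commutatif quasi-réductif se ramène à ces briques par la suite exacte $1 \to R_{u,K}Z \to Z \to Z^{\text{psréd}} \to 1$, le quotient commutatif pseudo-réductif se dévissant à son tour au plus grand tore quotient.

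Pour l'existence du sous-groupe ouvert maximal de type fini $\Zh^1$, on observe que le faisceau des composantes $\Zh/\Zh^0$ est étale sur $\oh$ à fibres géométriques abéliennes de type fini. Sa torsion constitue alors un sous-faisceau fini étale, et $\Zh^1$ est défini comme son image réciproque dans $\Zh$ ; c'est manifestement le plus grand sous-groupe ouvert de type fini. L'identification $\Zh^1(\oh)=Z(K)_b$ résulte d'un dévissage au cas torique : un élément $z \in Z(K)$ est borné si et seulement si $\omega(\chi(z))=0$ pour tout caractère rationnel $\chi \in X_K^*(Z)$ (puisque les caractères se factorisent à travers le plus grand tore quotient), condition qui coïncide par construction avec l'appartenance à $\Zh^1(\oh)$.

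L'obstacle principal est l'étape inductive du dévissage, car une extension de modèles de Néron localement de type fini n'est pas \emph{a priori} un modèle de Néron, et il faut vérifier la propriété universelle à la main. L'argument clef : un $K$-morphisme $X_K \to Z^{(i)}$ venant de la fibre générique d'un $\oh$-schéma lisse $X$ se projette en un morphisme vers $Z^{(i)}/Z^{(i+1)}$, qui se relève par hypothèse de récurrence à son modèle de Néron ; la différence avec un relèvement local prend ses valeurs dans $Z^{(i+1)}$, et le relèvement global s'obtient par recollement étale moyennant le modèle de Néron de $Z^{(i+1)}$.
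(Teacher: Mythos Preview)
Your dévissage along the lower central series differs from the paper's route, which follows \cite{BLR} more directly: since $Z$ is nilpotent its maximal torus $T$ is central, and after passing to a finite extension splitting $T$ one invokes \cite{BLR} 10.1.4 and 10.1.7 to reduce to the wound unipotent quotient $Z/T$, whose $K^{\text{nr}}$-points are bounded (this is the substantive input from \cite{Oes} and \cite{BLR}, chap.~10), whence a Néron model of finite type. Your path is more circuitous but not unreasonable in principle.

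There is, however, a genuine gap in your inductive step. What you sketch is the verification of the Néron \emph{mapping property} for a putative model of the middle term of $1\to A\to B\to C\to 1$: project a test morphism to $C$, extend by induction, then correct locally by sections of $A$. But you never construct the candidate smooth affine $\oh$-model of $B$ to which this argument would apply. ``Assembling'' such a model from the Néron models $\ah$ and $\ch$ is not formal: extensions of smooth affine group schemes over $\oh$ do not come for free from extensions over $K$, and one needs either an explicit cocycle or smoothening argument, or a structural result like \cite{BLR} 10.1.7. Without this the induction does not close.

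For $\Zh^1$, your outline is close to the paper's but leaves unproved that the component sheaf has finitely generated fibres whose torsion forms a finite normal subgroup (note that $Z$ is only nilpotent, so abelianness of $\pi_0$ is not immediate). The paper makes this precise by reducing to $K=K^{\text{nr}}$, exhibiting $\pi_0(\Zh_s)$ as an extension of the finite group $\pi_0((\Zh/\sh)_s)$ by the lattice $\pi_0(\sh_s)$, and showing that the valuation map $\bar p\colon\pi_0(\Zh_s)\to X_*(S)\otimes\QQ$ has finite kernel containing every finite subgroup; this kernel is then $Z(K)_b/\Zh^0(\oh)$. Your characterisation of boundedness via rational characters is correct, but the equivalence with ``torsion in $\pi_0$'' is exactly the content of that computation and should not be left implicit.
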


\begin{proof}
La première assertion sur l'existence d'un modèle de Néron est due à S. Bosch, W. Lütkebohmert et M. Raynaud, voir le th. 10.2.1 de \cite{BLR}, dont la démonstration consiste des étapes suivantes : on peut éliminer le tore maximal $T$ de $Z$ en prenant une extension finie qui le déploie et en appliquant les 10.1.4 et 10.1.7 de \cite{BLR} ; lorsque $Z$ est unipotent ployé, on se ramène à montrer que $Z(K^{\text{nr}})$ est borné, ce qu'il faut montrer par dévissage lorsque $Z=Z[p]$ est $p$-torsion ; on utilise l'écriture explicite de ces groupes due à J. Tits pour en construire une compactification sans aucun point rationnel à l'infini, cf. prop. 11 de \cite{BLR} ou th. VI.1. de \cite{Oes}.

Afin de prouver la dernière affirmation, on peut supposer $K=K^{\text{nr}}$ strictement hensélien. Notons que le groupe $\pi_0(\Zh_{s})(\kappa)$ est une extension du groupe fini $\pi_0(\Zh_s/\sh_{s})$ par le groupe abélien de rang $\dimn S$ $\pi_0(\sh_s)$. En même temps, l'application $p: Z(K)\rightarrow X_*(S)\otimes \QQ$ donnée par $\chi(p(z))=\omega(\chi(z))$ se factorise en une application $\bar{p}:\pi_0(\Zh_s)\rightarrow X_*(S)\otimes \QQ$, telle que l'image de $\pi_0(\sh_s)$ en détermine un réseau. Par conséquent, on voit que le noyau de $\bar{p}$ est un groupe fini distingué et égal à $Z(K)_b/\Zh^0(\oh)$ et que tout sous-groupe fini de $\pi_0(\Zh_s)$ en est contenu, ce qui implique l'existence d'un sous-groupe ouvert $\Zh^1$ de $\Zh$ possédant les propriétés désirées.
\end{proof}

Maintenant on se penche sur les modèles entiers $\uh_{a, \Omega}$ des $U_a$, $a \in \Phi$.

\begin{lem}
Soient $k, l \in \RR$ réels quelconques. Alors, il existe un et un seul $\oh$-modèle affine, lisse et connexe $\uh_{a, (k,l)}$ de $U_a$ tel que $\uh_{a, (k,l)}(\oh')=U'_{a,k}U'_{2a, l}$ pour toute extension conservatrice $K'$ de $K$.
\end{lem}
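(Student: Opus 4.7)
Mon plan est de traiter séparément l'unicité, immédiate, et l'existence, par analyse de cas selon le type de $\text{dom}(\zeta_a)$. \textbf{Pour l'unicité}, deux tels modèles $\uh_1,\uh_2$ auraient notamment les mêmes $\oh^{\text{sh}}$-points, où $\oh^{\text{sh}}$ est le strictement hensélisé (extension conservatrice particulière), et seraient donc canoniquement isomorphes en vertu du principe général qu'un $\oh$-modèle lisse, affine et connexe d'un $K$-groupe affine est déterminé par son sous-groupe borné de $\oh^{\text{sh}}$-points (cf. \cite{BTII}, 1.7.6, ou \cite{BLR}, 10.1.4).

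\textbf{Pour l'existence}, je distingue trois cas. \emph{Cas non multipliable}, $\text{dom}(\zeta_a)=\Res_{K_a/K}\text{SL}_2$ : alors $U_a \cong \Res_{K_a/K}\GG_a$ par $x_a$ et $U_{2a}$ est trivial. Notant $\varpi_a$ une uniformisante de $\oh_a$ et $e_a=e(K_a/K)$ l'indice de ramification, je pose $\uh_{a,(k,l)}:=\Res_{\oh_a/\oh}(\varpi_a^{\lceil ke_a\rceil}\GG_a)$, dont on voit directement que les $\oh'$-points sur une extension conservatrice $K'/K$ sont $\{r \in \oh_a\otimes_\oh\oh' : \omega(r)\geq k\}=U'_{a,k}$. \emph{Cas multipliable SU$_3$}, $\text{dom}(\zeta_a)=\Res_{K_{2a}/K}\text{SU}_{3,K_a/K_{2a}}$ : alors $U_a \cong \Res_{K_{2a}/K}H(K_a, K_{2a})$. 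En fixant un relèvement $\lambda \in \oh_{K_a}$ de trace $1$ (l'existence étant assurée par le caractère parfait de $\kappa$ et la séparabilité résiduelle), l'isomorphisme $j_\lambda$ identifie $U'_{a,k}U'_{2a,l}$ à $\{(x,y)\in \Res_{K_a/K_{2a}}\AAA^2 : y+\sigma(y)=0, \omega(x)\geq k, \omega(y)\geq l\}$. Je définis donc $\uh_{a,(k,l)}$ comme la sous-variété de $\Res_{\oh_{K_a}/\oh}(\varpi_{K_a}^{\lceil ke_a\rceil}\GG_a) \times \Res_{\oh_{K_{2a}}/\oh}(\varpi_{K_{2a}}^{\lceil le_{2a}\rceil}\GG_a)$ découpée par $y+\sigma(y)=0$. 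Le \emph{cas $\text{BC}_1$} est analogue, en adaptant à la formule $\varphi_a(x_a(x,y))=\frac{1}{2}\omega(\alpha x^2 + y)$.

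\textbf{L'obstacle principal} est le cas multipliable, où il faut vérifier que le schéma ainsi défini est lisse et connexe et hérite d'une loi de groupe compatible. La lissitude résultera de ce que $y\mapsto y+\sigma(y)$ est la trace d'une extension étale, donc un morphisme lisse (c'est ici qu'on utilise $\kappa$ parfait) ; la connexité géométrique, de celle du noyau de la trace. Enfin la fonctorialité de $\Res$ et l'indépendance du choix de $\lambda$ (signalée dans la note de bas de page de \ref{classification des groupes pseudo-reductifs}) permet l'identification des $\oh'$-points à $U'_{a,k}U'_{2a,l}$ pour toute extension conservatrice.
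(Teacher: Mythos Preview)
Your overall strategy matches the paper's exactly: uniqueness via the étoffé property over a conservative extension with infinite residue field, and existence by case analysis on $\text{dom}(\zeta_a)$, building each model out of Weil restrictions of shifted copies of $\GG_a$. The uniqueness argument and the non-multipliable case are correct as written.

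The $\text{SU}_3$ case, however, contains two real errors. First, a slip in the target: the $y$-coordinate lives in $K_a^0\subset K_a$, so the second factor must be cut out of $\Res_{\oh_{K_a}/\oh}(\cdots)$ by the trace-zero condition, not placed inside $\Res_{\oh_{K_{2a}}/\oh}(\cdots)$, where $y+\sigma(y)=2y$ is either vacuous (char.~$2$) or forces $y=0$. Second, and more substantively, the identification $U'_{a,k}U'_{2a,l}=\{\omega(x)\geq k,\ \omega(y)\geq l\}$ is false in general. After $j_\lambda$ one has $\varphi_a(x,y)=\tfrac12\omega(y+\lambda x\sigma(x))$, and unwinding the product $U_{a,k}U_{2a,l}$ yields the box $\{\omega(x)\geq k+\gamma,\ \omega(y)\geq l\}$ with $\gamma=-\tfrac12\omega(\lambda)$, but only after one first normalises $k\in\Gamma'_a$, $l\leq\inf(\Gamma_{2a}\cap[2k,+\infty[)$ and chooses $\lambda$ of \emph{maximal} valuation among trace-$1$ elements (cf.\ \cite{BTII}, 4.3.5 et 4.3.7). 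Your assumption that such a $\lambda$ can be taken in $\oh_{K_a}$ fails precisely when $K_a/K_{2a}$ is wildly ramified in characteristic $2$, where $\gamma>0$; residual separability, automatic since $\kappa$ is perfect, says nothing about ramification. Once these corrections are in place the model is visibly a product of Weil restrictions of $\GG_a$, so smoothness and connectedness are immediate and no separate appeal to smoothness of the trace map is needed.
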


\begin{proof}
L'unicité résulte toujours du fait que ces modèles deviennent immédiatement étoffés, cf. déf. 1.7.1. de \cite{BTII}, dès que le corps résiduel $\kappa'$ est infini. Les modèles lisses, affines et connexes $\uh_{2a,l}$ des sous-groupes radiciels $U_{2a}$ attachés aux racines divisibles seront obtenus comme adhérences schématiques de $U_{2a}$ dans $\uh_{a,(k,l)}$, donc on suppose que $a\in \Phi_{\text{nd}}$ n'est pas divisible. On peut supposer aussitôt que $G=\widetilde{G}^{\text{psréd},a}$ est pseudo-réductif, modérément universel et de rang pseudo-déployé $1$. Lorsque $\text{dom}(\zeta_a)= \Res_{K_a/K} \text{SL}_2$, il résulte que $U_{a,k}$ s'identifie à $\varpi_a^n\oh_{a}$ dedans $K_a\cong U_a(K)$, où $\oh_a$ désigne l'anneau d'entiers de $K_a$, $\varpi_a$ une uniformisante et $n$ est le plus petit entier majorant $[K_a:K]k$. On voit aisément que $U_{a,k}$ est représenté par la restriction des scalaires $\uh_{a,k}$ du spectre affine de $\oh_{a}[\varpi_a^{-n} t]$ le long de l'extension finie et plate $\oh \rightarrow \oh_a$ (en vertu du fait que $\oh$ soit japonais), identifié par transport de structure le long de $x_a$ à un modèle en groupes affine, lisse et connexe de $U_a$, et que ceci commute aux changements de base conservateurs. Ici on utilise l'hypothèse de excellence faite sur $\oh$, pour savoir que $\oh_a$ est toujours fini sur $\oh$.

Si $a$ est multipliable, on peut supposer sans perdre de généralité que $k \in \Gamma_a'$ et $l \leq \text{inf}(\Gamma_{2a} \cap [2k, +\infty[) $. Supposons que $\text{dom}(\zeta_a)=\Res_{K_{2a}/K}\text{SU}_{3,K_{a}/K_{2a}}$ et fixons $\lambda$ de valeur $\omega(\lambda)$ maximal parmi les éléments de trace $1$. Alors, $x_a$ identifie $U_a$ au groupe $\Res_{K_{2a}/K}H^{\lambda}$ de \ref{quasi-epinglages} dont les points rationnels sont les couples $(x,y) \in K_a \times K_a^0$, de telle sorte que la valuation soit donnée par $\varphi_a(x,y)=\frac{1}{2}\omega(y+\lambda x\sigma(x))$. Posant $\gamma=-\frac{1}{2}\omega(\lambda)$, on tire $\Gamma_a'=-\gamma+\omega(K_a^{\times})$, $\Gamma_{2a}=\omega(K_{a}^0\setminus 0)$ et $U_{a,k}U_{2a,l} $ coïncide avec la partie des couples $(x,y)$ tels que $\omega(x)\geq k+\gamma$ et $\omega(y)\geq l$, cf. 4.3.5. et 4.3.7. de \cite{BTII}. Grâce au paragraphe précédent, on obtient un modèle entier en groupes lisse, affine et connexe $\uh_{a, (k,l)}$ de $U_a$ représentant $U_{a,k}U_{2a,l}$, qui commute aux changements de base conservateurs. 

Considérons finalement le cas où $\text{dom}(\zeta_a)=\Res_{K_{2a}/K} \text{BC}_1$. On a une identification $U_a= \Res_{K_{a}/K} \GG_a \times \Res_{K_{2a}/K}\GG_a$, avec $K_a=K_{2a}^{1/2}$, de telle manière que $\varphi_a(x,y)=\frac{1}{2}\omega(\alpha x^2+y)$ ; en particulier, on a $\Gamma_{2a}=\omega(K_{2a}^{\times})$ et $\Gamma_a'=\frac{1}{2}\omega(K_a^{\times})\setminus \omega(K_a^{\times})$. Choisissons $\alpha$ tel que $\omega(\alpha) \notin \omega(K_{2a}^{\times})$ pour que $\omega(\alpha x^2+y)=\text{inf}\{\omega(\alpha)+2\omega(x), \omega(y)\}$ et il suffit maintenant d'observer que $U_{a,k}U_{2a,l}$ est formé par la partie des couples $(x,y)$ tels que $\omega(x)\geq k+\gamma$ et $\omega(y)\geq l$, où $\gamma=-\frac{1}{2}\omega(\alpha)$, laquelle est représentable par un $\oh$-modèle en groupes lisse, affine et connexe $\uh_{a, (k,l)}$ de $U_a$.
\end{proof}

Soit $\uh_{a,\Omega}:=\uh_{a, (f_{\Omega}(a), f_{\Omega}(2a))}$ le $\oh$-modèle lisse, affine et connexe de $U_a$ dont le groupe des points entiers $\uh_{a,\Omega}(\oh')$ coïncide avec le sous-groupe $U'_{a,f(a)} U'_{2a, f(2a)}$ pour toute extension conservatrice $K'$ de $K$. Toutefois, par une question de convenance, nous travaillerons avec les fonctions $f: \Phi \rightarrow \RR$ quasi-concaves au sens de la déf. 4.5.3. de \cite{BTII} (voir aussi la déf. 6.4.7. de \cite{BTI}, ainsi que l'addendum E2 de \cite{BTII}), plutôt que avec les parties bornées et non vides $\Omega$ des appartements, fonctions lesquelles ont été dessinées pour assurer qu'on puisse constituer des données radicielles schématiques à leur aide. Quelques énoncés se simplifient lorsqu'on considère l'optimisée $f'$ de $f$ donnée par $ f'(a):=\text{inf}\{ k \in \Gamma_a': k\geq f(a) \text{ ou, lorsque }\frac{a}{2}\in \Phi, k \geq 2f(\frac{a}{2}) \}
$ - cela ne change grand-chose, car on a $\uh_{a,f}=\uh_{a,f'}$, compte tenu de la rem. 6.4.12.(a) de \cite{BTI}. Remarquons que l'on possède de même des sous-groupes $P^0_f$, $P^1_f$ (et même $P^{\dagger}_f$, cf. 4.6.16 et 4.6.17. de \cite{BTII}) associés à $f$ en reprenant les définitions du \ref{immeubles} avec $f$ au lieu de $f_{\Omega}$.

\begin{thm}
Gardant les notations ci-dessus, les $\oh$-groupes $\Zh^0$ et $\uh_{a, f}$, pour toute racine non divisible $a \in \Phi_{\emph{nd}}$, définissent une donnée radicielle schématique dans le groupe quasi-réductif et quasi-déployé $G$. Le schéma en groupes $\gh_f^0$ fourni par le th. \ref{theoreme d'existence et unicite des groupes attaches aux drs} est l'unique $\oh$-groupe affine, lisse et connexe, satisfaisant à $\gh^0_f(\oh')=P_{f,K'}^0$ pour toute extension conservatrice $K'$ de $K$.
\end{thm}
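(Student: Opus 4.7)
Le plan consiste à vérifier que la famille $(\Zh^0, (\uh_{a,f})_{a \in \Phi_{\text{nd}}})$ satisfait aux quatre axiomes d'une donnée radicielle schématique dans $G$ au-dessus de $\oh$, puis à invoquer le th. \ref{theoreme d'existence et unicite des groupes attaches aux drs} pour en déduire l'existence d'un $\oh$-groupe $\gh_f^0$ affine, lisse et connexe, et enfin à identifier ses points entiers à $P^0_{f, K'}$ sur toute extension conservatrice.

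L'axiome (DRS 0) est immédiat : le tore déployé $S$ est représenté par l'unique $\oh$-tore déployé $\sh$, et son inclusion dans le groupe quasi-réductif nilpotent $Z$ se prolonge en un morphisme $\sh \rightarrow \Zh$ grâce à la propriété universelle de Néron établie en \ref{modele de neron}, l'image tombant dans $\Zh^0$ par connexité de $\sh$. Pour (DRS 1), il s'agit de vérifier que la conjugaison par $\Zh^0$ préserve la filtration des $\uh_{a, f}$. L'axiome de compatibilité avec $\omega$ établi en \ref{verificacao da valorizacao} ramène ceci à l'observation que $\omega(a(z)) = 0$ pour tout $z \in \Zh^0(\oh')$, laquelle résulte de ce que $\Zh^0$ est le noyau de l'application de Kottwitz décrite en \ref{modele de neron} ; par lissité et parce que $\uh_{a, f}$ est déterminé par sa valeur sur les extensions conservatrices, le morphisme ensembliste s'étend en un morphisme de schémas.

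Le cœur de l'argument concerne (DRS 2), qui repose sur la quasi-concavité de $f$ : les inégalités $f(pa+qb) \leq pf(a) + qf(b)$ pour $(p,q) \in \NN_{>0}^2$ avec $pa+qb \in \Phi_{\text{nd}}$ garantissent précisément que la formule du commutateur envoie $\uh_{a, f}\times \uh_{b, f}$ dans $\prod_c \uh_{c, f}$. Comme les $\uh_{a,f}$ ne dépendent que du quotient pseudo-réductif $G^{\text{psréd}}$, on peut supposer $G$ pseudo-réductif puis pseudo-simple, ce qui permet de se ramener aux trois cas primitifs de la classification rappelée en \ref{classification des groupes pseudo-reductifs}. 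Le cas réductif est traité par les formules classiques de l'appendice de \cite{BTII} ; le cas exotique basique pseudo-déployé s'en déduit moyennant l'immersion $G \hookrightarrow \Res_{K^{1/p}/K} G'$ et la description explicite des sous-groupes radiciels ; et le cas absolument non réduit $\text{BC}_n$ se traite en reprenant le raisonnement de la preuve de la prop. \ref{verificacao da valorizacao} (avec un traitement séparé des racines multipliables, divisibles et ni l'un ni l'autre). L'axiome (DRS 3), relatif à l'existence d'un voisinage ouvert sur lequel le passage à la cellule ouverte s'étend intégralement, se ramène similairement au rang pseudo-déployé $1$ par l'étude du revêtement $\widetilde{G}^a$, et découle alors d'un calcul direct dans chacun des trois types $\Res_{K_a/K}\text{SL}_2$, $\Res_{K_{2a}/K}\text{SU}_{3, K_a/K_{2a}}$ et $\Res_{K_{2a}/K}\text{BC}_1$.

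Une fois les axiomes établis, le th. \ref{theoreme d'existence et unicite des groupes attaches aux drs} fournit directement $\gh_f^0$ affine, lisse et connexe. L'identification $\gh_f^0(\oh') = P^0_{f, K'}$ résulte de la décomposition par la grosse cellule $\uh_f^- \times \Zh^0 \times \uh_f^+$ et du fait que $P^0_{f, K'}$ est, par définition même, le sous-groupe de $G(K')$ engendré par $\Zh^0(\oh')$ et les $\uh_{a, f}(\oh')$. Quant à l'unicité, deux $\oh$-modèles lisses, affines et connexes de $G$ ayant mêmes points sur toute extension conservatrice sont étoffés au sens de \cite{BTII}, 1.7.1., donc isomorphes par la propriété universelle afférente. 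Le point le plus délicat reste à mes yeux la combinaison de (DRS 2) et (DRS 3) dans le cas absolument non réduit $\text{BC}_n$, où la coexistence de plusieurs \og longueurs \fg{} de racines impose un calcul soigneux des filtrations intégrales.
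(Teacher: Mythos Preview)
Your overall plan coincides with the paper's: verify the four (DRS) axioms, apply th.~\ref{theoreme d'existence et unicite des groupes attaches aux drs}, then identify the integral points. Axioms (DRS 0)--(DRS 2) are handled in essentially the same way in both accounts, via the Néron property, the compatibility with $\omega$, and the quasi-concavity of $f$ respectively; for (DRS 2) the paper simply cites property (QC~2) of quasi-concave functions rather than re-running the reduction to primitive cases, the latter being already absorbed into prop.~\ref{verificacao da valorizacao}.

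Your treatment of (DRS 3) has a gap. Reducing to the rank-$1$ cover $\widetilde{G}^a$ and computing explicitly in the three primitive types does suffice to construct the open $\wh_{a,f}$ and to extend the two $U_{\pm a}$-components of $\beta_a$. But the remaining component $\text{pr}_Z\circ\beta_a$ takes values in the centraliser $Z=Z_G(S)$ of the original quasi-reductive group $G$, not in the Cartan $\widetilde{Z}^a$ of $\widetilde{G}^a$; since $G$ is only quasi-réductif, $Z$ contains $R_{u,K}G$ and admits no explicit description amenable to a \og calcul direct\fg{}. The paper handles this by invoking the universal Néron property of $\Zh$ (prop.~\ref{modele de neron}) directly: the rational map $\text{pr}_Z\circ\beta_a:\wh_{a,f}\dashrightarrow Z$ extends automatically to a morphism $\wh_{a,f}\to\Zh$, which lands in $\Zh^0$ by connexity of the source together with the fact that the unit section maps to the unit section. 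Even if you route the argument through $\widetilde{\Zh}^{a,0}$, you still need the Néron property to produce the integral morphism $\widetilde{\Zh}^{a,0}\to\Zh^0$. The difficulty you flag in the $\text{BC}_n$ case is thus not the real obstruction; the $Z$-component is the delicate point, uniformly across all types.

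Two smaller remarks. First, the paper reduces explicitly at the outset to an infinite residue field, so that all the models become étoffés and the various pointwise inclusions upgrade automatically to scheme morphisms; you invoke this implicitly, which is fine but worth making explicit. Second, the equality $\gh_f^0(\oh')=P^0_{f,K'}$ is not quite immediate from the big-cell decomposition: one inclusion is clear, but showing that every $\oh'$-point is a product of points of $\Zh^0$ and of the $\uh_{a,f}$ is a density argument as in \cite{BTII}, cor.~4.6.7.
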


\begin{proof}
Afin de vérifier les règles, remarquons qu'on peut, grâce à un changement de base conservateur, supposer le corps résiduel parfait $\kappa$ infini : si $f$ est l'optimisée d'une fonction concave, cela est évident ; sinon, on a besoin de faire des calculs comme dans la prop. 4.5.5., le cor. 4.5.7. et la prop. 4.5.10. de \cite{BTII}. En particulier, les $\oh$-groupes $\sh$, $\Zh^0$ et $\uh_{a,f}$ sont supposés étoffés, cf. déf. 1.7.1. de \cite{BTII}. Ceci étant, l'axiome (DRS 0) résulte de la fonctorialité des modèles de Néron connexes, (DRS 1) de la compatibilité des données radicielles valuées et (DRS 2) de la propriété (QC 2) des fonctions quasi-concaves. Quant à (DRS 3), il faut premièrement définir la fonction $d_a\in K[U_{-a}\times U_a]$ dont l'ouvert spécial détermine l'intersection de $U_{-a}\times U_a$ avec la grosse cellule $C=U^+\times Z \times U^-$ : on se ramène rapidement au cas des groupes de rang pseudo-déployé $1$, à savoir les restrictions des scalaires de $\text{SL}_2$, $ \text{SU}_3$ et $\text{BC}_1$ ; les deux premiers cas se traitent comme dans 4.1.6. et 4.1.12. de \cite{BTII} et le dernier y se ramène en tirant en arrière la fonction $d_a$ pour $\text{SL}_2$. Ensuite on procède verbatim comme dans 4.5.8. de \cite{BTII}, à savoir on vérifie que cette fonction est entière par un calcul de points entiers et l'on note $\wh_{a,f}$ l'ouvert spécial défini par $d_a \in \oh[\uh_{-a,f} \times \uh_{a,f}]$, qui contient la section unité et est aussi étoffé d'après 1.7.3.e de \cite{BTII}. Enfin il ne reste qu'à voire que $\beta_a: W_{a} \rightarrow U_a \times Z \times U_{-a}$ se prolonge en un morphisme entier. Il est aisé de calculer que $\text{pr}_{U_{\pm a}}\circ \beta_a(\wh_{a,f}(\oh))\subseteq \uh_{\pm a, f}(\oh)$, en se ramenant au cas pseudo-réductif et modérément universel de rang pseudo-déployé égal à $1$. D'autre part, le morphisme composé de schémas $\text{pr}_z\circ \beta_a$ se prolonge en un morphisme entier $\wh_{a,f} \rightarrow \Zh$ grâce à la propriété universelle du modèle de Néron de $Z$, qui, en fait, se factorise à travers de $\Zh^0$ par une question de connexité du membre de gauche et ce que l'image de la section unité de $\wh_{a,f}$ est la section unité de $\Zh$. Enfin la représentabilité de $P^0_{f}$ par $\gh_{f}$ découle d'un raisonnement classique avec des ouverts denses, voir le cor. 4.6.7. de \cite{BTII}.
\end{proof}

Retournant aux parties bornées non vides $\Omega$ d'un appartement quelconque $\ah(G, S, K)$ de $\ih(G, K)$, on note $\gh_{\Omega}:=\gh_{f_{\Omega}}=\gh_{f'_{\Omega}}$ le $\oh$-modèle en groupes de $G$ déduit du th. précédente et on l'appelle le modèle immobilier attaché à $\Omega$. Si $\Omega$ est, en plus, une facette, alors $\gh_{\textbf{f}}$ est dénommé un modèle parahorique. On remarque encore qu'on possède aussi d'autres $\oh$-modèles en groupes $\gh_{\Omega}^1$ (resp. $\hat{\gh}_{\Omega}$, resp. $\gh_{\Omega}^{\dagger}$) de $G$ représentant $P_{\Omega}^1$ (resp. $\hat{P}_{\Omega}$, resp. $P_{\Omega}^{\dagger}$), mais cependant on ne sait plus que pour $P_{\Omega}^1$, si cette représentabilité est préservée lorsqu'on prendre d'extensions conservatrices et même non ramifiées. Si l'on ne se intéressait pas à la grosse cellule, on pourrait avoir construit les modèles entiers $\gh^0_{\Omega}$ de manière plus commode, cf. lem. 7.2 de \cite{Yu}, en prenant l'adhérence schématique de $G$ dans une représentation convenable $\text{GL}_n$ et puis en lissifiant la composante neutre du modèle plat échéant au moyen d'une suite de dilatations (voir soit \cite{WW}, soit le 3.2 de \cite{BLR}, soit \cite{RichDil}).

M. Solleveld nous a demandé si l'existence de bonnes filtrations au sens de P. Schneider et U. Stuhler \cite{SchSt} de $Z(K)$ par rapport à $G(K)$ peut être déduite de nos modèles entiers. Bien que l'on a toujours la filtration exhaustive des sous-groupes de congruence modulo $\varpi^n$, celle-ci est typiquement trop faible ; d'autre part, on pourrait définir la filtration de Moy-Prasad dans ce cadre, mais elle ne serait pas toujours exhaustive, parce que $Z(K)$ n'est pas forcément sans $p$-torsion, même lorsque $G$ est pseudo-réductif, cf. \cite{CGP}, ex. 1.6.3. Cependant nous ne savons pas comment unifier les deux méthodes. En théorie des représentations $p$-adiques, on s'intéresse aussi aux schémas en groupes associés aux fonctions quasi-concaves $f: \Phi \cup \{0\} \rightarrow \RR$. 

\subsection{}\label{parahoriques et paraboliques}
Reprenons les notations des paragraphes précédents et déterminons maintenant le radical unipotent du $k$-groupe $\gh^0_{f,s}:=\gh^0_f \otimes_{\oh} k$, où $f:\Phi \rightarrow \RR$ est une fonction quasi-concave et optimale. On note $f^*: \Phi \rightarrow \RR$ la fonction qui fait correspondre à une racine $a$ le plus petit nombre réel appartenant à $\Gamma_a'$ et majorant strictement $f(a)$ si $f(a)+f(-a)=0$ ou tout simplement la même valeur $f(a)$ si $f(a)+f(-a)>0$.

\begin{lem}\label{lemme du radical unipotent}
La fonction $f^*$ est quasi-concave, le radical unipotent de $\gh^0_{f,s}$ s'identifie à l'image de $\uh^+_{f^*,s}\times R_u\Zh^0_{s}\times \uh^-_{f^*,s}$ par le morphisme produit et le système de racines du quotient réductif $\gh^{0, \emph{réd}}_{f,s}$ suivant le tore déployé maximal $\sh_s$ s'identifie à $\Phi_f=\{ a \in \Phi: f(a)+f(-a)=0\}$.
\end{lem}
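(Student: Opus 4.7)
Je procéderais en trois étapes. D'abord, je vérifierais la quasi-concavité de $f^*$ par un examen direct : lorsque $a \notin \Phi_f$ on a $f^*(a)=f(a)$ et les axiomes (QC 1), (QC 2) se déduisent immédiatement de ceux satisfaits par $f$ ; lorsque $a \in \Phi_f$, on remplace $f(a)$ par son successeur immédiat dans $\Gamma_a'$, ce qui préserve la quasi-concavité en vertu de la compatibilité additive de la famille $\{\Gamma_a'\}_{a \in \Phi}$ (on traite séparément le cas où $a+b \in \Phi_f$ du cas où $a+b \notin \Phi_f$, en observant qu'un successeur dans $\Gamma_a'+\Gamma_b'$ est bien majoré par la somme des successeurs des facteurs).

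Je construirais ensuite le candidat pour le radical unipotent. L'observation-clé est que $R_u\Zh^0_s$ et les $\uh_{a,f^*,s}$ pour $a \in \Phi_{\text{nd}}$ définissent une donnée radicielle schématique sur $k$, par spécialisation des axiomes (DRS 0)--(DRS 3) satisfaits par la donnée $(\Zh^0, \uh_{a, f^*})$ sur $\oh$. L'analogue sur $k$ du th. \ref{theoreme d'existence et unicite des groupes attaches aux drs} fournit donc un $k$-groupe lisse, affine et connexe $\uh$ et un morphisme $\uh \to \gh^0_{f,s}$ qui s'avère être une immersion fermée : on le vérifie d'abord sur la grosse cellule $\uh^-_{f^*,s}\times R_u\Zh^0_s \times \uh^+_{f^*,s}$ de $\uh$, puis on recolle par translation. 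Le groupe $\uh$ est unipotent, chaque facteur de sa grosse cellule l'étant, et il est distingué dans $\gh^0_{f,s}$ en vertu de ce que $\uh_{a,f,s}$ normalise $\uh_{b,f^*,s}$ pour $a, b \in \Phi$, conséquence immédiate des axiomes (DRS 1)--(DRS 2) couplés à la quasi-concavité simultanée de $f$ et $f^*$.

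Il resterait alors à analyser le quotient $\bar{\gh} := \gh^0_{f,s}/\uh$, dont la grosse cellule s'écrit par passage au quotient $\bar{\uh}^-\times \bar{\Zh}\times \bar{\uh}^+$, où $\bar{\Zh}=\Zh^0_s/R_u\Zh^0_s$ et $\bar{\uh}^{\pm}$ est le produit des $\bar{\uh}_{a,f,s}:=\uh_{a,f,s}/\uh_{a,f^*,s}$ pour $a\in \Phi^{\pm}$. Or $\Zh^0_s$ est nilpotent par spécialisation de la nilpotence de $Z$, donc $\bar{\Zh}$ est un tore contenant $\sh_s$ ; et $\bar{\uh}_{a,f,s}$ est non trivial exactement lorsque $a \in \Phi_f$, par la définition même de $f^*$. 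Il s'ensuit que $\bar{\gh}$ est pseudo-réductif à système de racines $\Phi_f$ par rapport à $\sh_s$ ; comme $k$ est parfait, pseudo-réductif équivaut à réductif et l'on obtient $\uh = R_u\gh^0_{f,s}$ par maximalité du radical unipotent. L'obstacle principal serait la vérification rigoureuse que $\uh$ est un sous-groupe fermé distingué de $\gh^0_{f,s}$, notamment la validité des axiomes (DRS) pour $f^*$ et l'étude précise de la conjugaison par les sous-groupes radiciels dans la fibre spéciale, les autres étapes étant essentiellement formelles.
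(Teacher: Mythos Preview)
Votre stratégie diffère de celle de l'article et comporte deux lacunes réelles, l'une que vous signalez et l'autre que vous sous-estimez.

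\textbf{La normalité de $\uh$.} Vous affirmez que $\uh_{a,f,s}$ normalise $\uh_{b,f^*,s}$ comme \og conséquence immédiate des axiomes (DRS 1)--(DRS 2) couplés à la quasi-concavité simultanée de $f$ et $f^*$ \fg{}. Or, lorsque $a$ et $b$ ne sont pas proportionnelles, (DRS 2) donne $[\uh_{a,f},\uh_{b,f^*}]\subseteq \prod \uh_{pa+qb,\,pf(a)+qf^*(b)}$ et il faudrait donc l'inégalité \emph{mixte} $pf(a)+qf^*(b)\geq f^*(pa+qb)$, qui n'est pas une conséquence formelle des quasi-concavités séparées de $f$ et $f^*$ : elle dépend de la compatibilité fine entre les sauts de $\Gamma'_b$ et de $\Gamma'_{pa+qb}$. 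Le cas $b=-a$ est encore plus délicat, puisqu'il fait intervenir la composante dans $\Zh^0_s$ via $\beta_a$, et vous ne traitez pas non plus la conjugaison de $R_u\Zh^0_s$ par $\uh_{a,f,s}$, laquelle exige de savoir que $R_u\Zh^0_s$ opère trivialement sur $\uh_{a,f,s}/V_a$ --- ce qui revient presque à supposer le résultat.

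\textbf{La réductivité du quotient.} Vous écrivez que \og les autres étapes sont essentiellement formelles \fg{}, mais déduire que $\bar\gh$ est réductif de sa seule grosse cellule $\bar\uh^-\times\bar\Zh\times\bar\uh^+$ avec $\bar\Zh$ torique et système de racines $\Phi_f$ symétrique ne suffit pas : rien n'interdit a priori que $R_u\bar\gh$ rencontre certains $\bar\uh_a$. Il faut exhiber, pour chaque $a\in\Phi_f$, un élément de $N_{\bar\gh}(\bar\Zh)$ induisant la réflexion $s_a$ --- c'est précisément le rôle du calcul avec $m(u)$ dans l'article.

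\textbf{Ce que fait l'article.} L'article contourne ces deux difficultés d'un seul coup : puisque $\Zh^0_s$ et les $\uh_{a,f,s}$ sont le centralisateur de $\sh_s$ et les sous-groupes radiciels de $\gh^0_{f,s}$, le résultat structurel \cite{BTII}~1.1.11 (ou \cite{CGP}~prop.~3.3.10) ramène la détermination de $R_u\gh^0_{f,s}$ à celle de chaque intersection $R_u\gh^0_{f,s}\cap\uh_{a,f,s}$, donc au cas d'un seul rayon radiciel. En rang~$1$, on montre directement que $\beta_a$ se prolonge à $\uh_{-a,f^*}\times\uh_{a,f}$, ce qui donne un sous-groupe résoluble, puis on utilise $m(u)$ avec $\varphi_a(u)=f(a)$ pour voir que la réflexion existe dans le quotient réductif dès que $a\in\Phi_f$. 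La quasi-concavité de $f^*$ est alors une \emph{conséquence} de l'identification du radical unipotent, et non un préalable.
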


\begin{proof}
On peut supposer sans perdre de généralité que $\kappa$ est infini, en prenant une extension conservatrice non ramifiée et en appliquant la descente étale. Vu que la fibre spéciale $\Zh^0_s$ (resp. $\uh_{a,f,s}$) coïncide avec le centralisateur du tore déployé maximal $\sh_s$ dans $\gh^0_{f,s}$ (resp. le sous-groupe radiciel associé au poids $a \in \Phi_{\text{nd}}$), cf. 4.6.4 de \cite{BTII}, on peut d'après soit \cite{BTII}, 1.1.11, soit la prop. 3.3.10 de \cite{CGP}, se ramener au cas où $\Phi$ consiste d'un seul rayon radiciel. Maintenant on observe que $\beta_a$ se prolonge en une application de $\uh_{-a, f^*}\times \uh_{a, f}$ dans $\uh_{a,f}\times \Zh^0 \times \uh_{-a, f^*}$, dont les calculs ressemblent ceux de 4.5.8 ii de \cite{BTII}. Par suite, la partie localement fermée $\uh_{a,f,s}\times \Zh_s^0\times \text{im}\,\uh_{-a,f^*,s}$ de $\gh^0_{f,s}$ est même un sous-groupe fermé et résoluble, ce qui termine la démonstration lorsque $f=f^*$. Supposons que $f^*(a)>f(a)$ de telle sorte que $f(a)+f(-a)=0$, et prenons un élément $u \in U_a$ tel que $\varphi_a(u)=f(a)$ . Alors $\varphi_{-a}(u')=\varphi_{-a}(u'')=-f(a)$, où $m(u)=u'uu''$ est le seul élément de $N(K)$ s'écrivant de cette manière, donc $m(u) \in \gh^0_{f,s}$ y induit la réflexion de $\sh_s$ dans $\gh^{0, \text{réd}}_{f,s}$ par rapport à $a$, en échangeant le sous-groupe résoluble $\uh_{a,f,s}\times \Zh_s^0\times \text{im}\,\uh_{-a,f^*,s}$ par son opposé $\text{im}\,\uh_{a,f^*,s}\times \Zh_s^0\times \uh_{-a,f,s}$. On note que cet argument ne change pas lorsque $f^*(2a)>f(2a)$ et qu'il implique que le complément de $ \text{im}\, \uh_{a,f^*,s}(\kappa)$ dans $\uh_{a,f,s}(\kappa)$ n'appartient à $R_u\gh^0_{f,s}$. Par densité des points rationnels lorsque $\kappa$ est infini et parfait, il s'ensuit que le sous-groupe radiciel $V_{a,f}$ de $\gh^{0, \text{réd}}_{f,s}$ s'identifie à $\uh_{a,f,s}/\text{im}\,\uh_{a,f^*,s}$, d'où la conclusion. Les affirmations concernant la quasi-concavité de $f^*$ et le système de racines du quotient réductif en résultent.
\end{proof}

 Enfin on arrive à la généralisation quasi-réductive de l'énoncé du théorème fondamental 4.6.33 de \cite{BTII} sur le lien entre les sous-groupes parahoriques $P_{\textbf{f}}$ attachés à une facette $\textbf{f}$ de $\ih(G, K)$ et les sous-groupes paraboliques du quotient réductif de la fibre spéciale $\gh_{\textbf{f},s}$.

\begin{thm}\label{th. paraboliques et parahoriques}
Reprenons toutes les notations introduites aux paragraphes précédents et rappelons que $G$ est supposé quasi-déployé. Soit $\emph{\textbf{f}} \subseteq \ih(G, K)$ une facette et $\ih(G,K)_{\emph{\textbf{f}}}$ l'étoile de $\emph{\textbf{f}}$, c'est-à-dire, l'ensemble ordonné de toutes les facettes $\emph{\textbf{f}}'$ dont l'adhérence contient $\emph{\textbf{f}}$. À chaque membre $\emph{\textbf{f}}'$ de l'étoile, associons le $\kappa$-sous-groupe algébrique  $Q_{\emph{\textbf{f}}', \emph{\textbf{f}}}$ de $\gh^0_{\emph{\textbf{f}},s}$
image du morphisme canonique $ \gh^0_{\emph{\textbf{f}}',s}\rightarrow \gh^0_{\emph{\textbf{f}},s} $. Les propriétés suivantes sont remplies:

\begin{enumerate}
\item Cette application $\emph{\textbf{f}}' \mapsto Q_{\emph{\textbf{f}}', \emph{\textbf{f}}}$ est un isomorphisme d'ensembles ordonnés entre l'étoile de $\emph{\textbf{f}}$ et l'ensemble des sous-groupes paraboliques de $\gh^0_{\emph{\textbf{f}},s}$ ordonné par la relation opposée à l'inclusion. 

\item Le sous-groupe parahorique $\gh^0_{\emph{\textbf{f}}'}(\oh) \subseteq  \gh^0_{\emph{\textbf{f}}}(\oh) $ est égal à l'image réciproque de $Q_{\emph{\textbf{f}}', \emph{\textbf{f}}}(\kappa)$ par la projection $\gh^0_{\emph{\textbf{f}}}(\oh) \rightarrow \gh^0_{\emph{\textbf{f}},s}(\kappa)$.
\end{enumerate}
\end{thm}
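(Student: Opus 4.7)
The plan is to adapt the proof of BTII 4.6.33 to the quasi-reductive setting, using Lemma \ref{lemme du radical unipotent} as the key structural input. After passing to a maximal unramified extension (legal by étale descent, since parabolicity and parahoricity both descend), I may assume $\kappa$ algebraically closed, so rational points are dense in both $\gh^0_{\textbf{f},s}$ and $\gh^0_{\textbf{f}',s}$. The first step is then to construct the canonical morphism $\gh^0_{\textbf{f}'} \to \gh^0_{\textbf{f}}$ used in the statement: when $\textbf{f}' \succcurlyeq \textbf{f}$, the concave functions satisfy $f_{\textbf{f}'} \geq f_{\textbf{f}}$ pointwise, whence inclusions $\uh_{a, f_{\textbf{f}'}} \hookrightarrow \uh_{a, f_{\textbf{f}}}$ and $\Zh^0 = \Zh^0$ compatible with the schematic root datum axioms (DRS 1)-(DRS 3). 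By the uniqueness clause of Theorem \ref{theoreme d'existence et unicite des groupes attaches aux drs}, these assemble into the desired morphism.

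Next I identify $Q_{\textbf{f}',\textbf{f}}$. By Lemma \ref{lemme du radical unipotent}, $R_u \gh^0_{\textbf{f},s}$ is the image of $\uh^+_{f_{\textbf{f}}^*, s} \times R_u \Zh^0_s \times \uh^-_{f_{\textbf{f}}^*, s}$, and the reductive quotient $\gh^{0,\text{réd}}_{\textbf{f},s}$ has root system $\Phi_{f_{\textbf{f}}} = \{a : f_{\textbf{f}}(a)+f_{\textbf{f}}(-a)=0\}$. Because $\textbf{f}'$ lies in the star of $\textbf{f}$, the subset $\Psi_{\textbf{f}',\textbf{f}} := \{a \in \Phi_{f_{\textbf{f}}} : a(\textbf{f}'-\textbf{f}) \geq 0\}$ is a parabolic subset of $\Phi_{f_{\textbf{f}}}$, and direct inspection using the open cell shows that $Q_{\textbf{f}',\textbf{f}}$ contains $R_u \gh^0_{\textbf{f},s}$ and projects in $\gh^{0,\text{réd}}_{\textbf{f},s}$ onto the subgroup generated by $\sh_s$ and the root subgroups $V_{a, f_{\textbf{f}}}$ for $a \in \Psi_{\textbf{f}',\textbf{f}}$. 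By the classification of pseudo-parabolic subgroups (CGP, lem.~C.2.16), this is precisely a pseudo-parabolic subgroup of $\gh^0_{\textbf{f},s}$.

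To prove (1) it remains to show surjectivity and injectivity of $\textbf{f}' \mapsto Q_{\textbf{f}',\textbf{f}}$. Injectivity follows because the parabolic subset $\Psi_{\textbf{f}',\textbf{f}}$ determines the linear span of $\textbf{f}' - \textbf{f}$ inside $\ah(G,S,K)$, hence the facet $\textbf{f}'$ in the local cone at $\textbf{f}$ (as $\textbf{f}'$ is itself a facet of the polysimplicial structure). Surjectivity: any pseudo-parabolic $Q$ of $\gh^0_{\textbf{f},s}$ containing $\sh_s$ corresponds to a parabolic subset $\Psi \subseteq \Phi_{f_{\textbf{f}}}$, and by classical combinatorics of root systems and affine apartments, there is a unique facet $\textbf{f}' \succcurlyeq \textbf{f}$ such that $\Psi = \Psi_{\textbf{f}',\textbf{f}}$. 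Conjugation by $\gh^0_{\textbf{f},s}(\kappa) = \gh^0_{\textbf{f}}(\oh)/\text{ker}$ allows us to drop the assumption that $Q$ contains $\sh_s$ (using conjugacy of maximal split tori in $\gh^0_{\textbf{f},s}$ together with the surjectivity of $\gh^0_{\textbf{f}}(\oh) \twoheadrightarrow \gh^0_{\textbf{f},s}(\kappa)$ from smoothness and Lang). Order-reversal is immediate since $\textbf{f}' \prec \textbf{f}''$ in the star implies $\Psi_{\textbf{f}',\textbf{f}} \supseteq \Psi_{\textbf{f}'',\textbf{f}}$.

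Assertion (2) is now formal: the inclusion $\gh^0_{\textbf{f}'}(\oh) \subseteq \text{pr}^{-1}(Q_{\textbf{f}',\textbf{f}}(\kappa))$ is by definition of $Q_{\textbf{f}',\textbf{f}}$, and the reverse follows from the Iwahori-type factorization $\gh^0_{\textbf{f}}(\oh) = \uh^-_{f_{\textbf{f}}}(\oh) \cdot \Zh^0(\oh) \cdot \uh^+_{f_{\textbf{f}}}(\oh)$ coming from the open cell: an element whose mod-$\varpi$ image lies in $Q_{\textbf{f}',\textbf{f}}(\kappa)$ has each root-group component landing in the preimage of $V_{a, f_{\textbf{f}}}(\kappa)$ for $a \in \Psi_{\textbf{f}',\textbf{f}}$ or in the kernel, i.e. in $\uh_{a, f_{\textbf{f}'}}(\oh)$, which is exactly membership in $\gh^0_{\textbf{f}'}(\oh)$. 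The main difficulty, distinguishing this from the reductive case of BTII, lies in ensuring that the pseudo-parabolic classification genuinely applies to the smooth connected (but neither pseudo-reductive nor reductive) group $\gh^0_{\textbf{f},s}$, which requires passing through its reductive quotient and keeping track of the unipotent radical contributions from exotic and $\text{BC}_n$-type factors of $G$; this is handled by reducing to the case where $G$ is primitive and invoking the explicit description of the quasi-epinglage furnished in \ref{quasi-epinglages}.
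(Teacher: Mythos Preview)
Your overall architecture---use Lemma \ref{lemme du radical unipotent} to identify $Q_{\textbf{f}',\textbf{f}}$ with a pseudo-parabolic, then establish the bijection first for facets in a fixed apartment and extend by conjugation---is the same as the paper's. The gap is in your proof of assertion (2). The ``Iwahori-type factorization'' $\gh^0_{\textbf{f}}(\oh) = \uh^-_{f_{\textbf{f}}}(\oh)\cdot \Zh^0(\oh)\cdot \uh^+_{f_{\textbf{f}}}(\oh)$ is false: the big cell is an \emph{open} subscheme of $\gh^0_{\textbf{f}}$, not the whole group, so on $\oh$-points it only captures those elements whose reduction lies in the big cell of the special fibre. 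A parabolic $Q_{\textbf{f}',\textbf{f}}(\kappa)$ is certainly not contained in that big cell (already for $\text{SL}_2$ over $\oh$ with $\textbf{f}$ a vertex, the Borel is not inside $U^- T U^+$), so your root-group-by-root-group analysis does not apply to a general element of the preimage.

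The paper repairs this by separating the two ingredients: first, surjectivity of $\gh^0_{\textbf{f}'}(\oh)\to Q_{\textbf{f}',\textbf{f}}(\kappa)$, which follows from Hensel and the big cell of $\gh^0_{\textbf{f}'}$; second, the kernel of $\gh^0_{\textbf{f}}(\oh)\to \gh^0_{\textbf{f},s}(\kappa)$ is contained in $\gh^0_{\textbf{f}'}(\oh)$, which is \cite{BTII} 4.6.8(i). Your root-component argument \emph{does} work for this kernel (it lies in the big cell since the identity does), so you can salvage your approach by restricting the factorization claim to $\ker(\text{pr})$ and then combining with the surjectivity. A secondary looseness: your injectivity argument only treats facets $\textbf{f}'$ lying in the chosen apartment, whereas the map is defined on the whole star; the paper handles this by first proving (2) and then using that $P^0_{\textbf{f}'}$ determines $\textbf{f}'$ via its fixed-point set in any common apartment (\cite{BTI} 7.4.5).
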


\begin{proof}
Soient $\ah(G,S, K)$ un appartement de l'immeuble contenant $\textbf{f}$, fixé dans la suite, et $\textbf{a}$ l'unique alcôve dont l'adhérence contient $\textbf{f}$ telle que $a(x)+f_{\textbf{f}}(a) >0$ pour tout $x \in \textbf{a}$ et toute racine positive $a \in \Phi_{\textbf{f}}^+$ du sous-système de racines de $\textbf{f}$. On va premièrement étudier les facettes de l'appartement emboîtées entre $\textbf{f}$ et $\textbf{a}$, vérifier qu'elles correspondent par l'application de l'énoncé aux sous-groupes paraboliques standards de $\gh^0_{\textbf{f},s}$ contenant le sous-groupe pseudo-parabolique minimal attaché aux racines positives $\Phi_{\textbf{f}}^+$, ainsi que les restantes affirmations du théorème. 

Observons que ces facettes emboîtées correspondent bijectivement aux parties $J$ de $\Delta_{\textbf{f}}$, en envoyant $\textbf{f}'$ sur l'ensemble $J= \{a \in \Delta_{\textbf{f}}: f_{\textbf{f}'}(-a)=f_\textbf{f}(-a)\}$, et notons que $\Phi_{\textbf{f}_J}$ est égal au sous-système de racines de $\Phi_{\textbf{f}}$ engendré par $J$. Comme évidemment $f_{\textbf{f}'} \geq f_{\textbf{f}}$ pour toute sur-facette de $\textbf{f}$, il résulte qu'il y a une application canonique $\gh^0_{\textbf{f}_J} \rightarrow \gh^0_{\textbf{f}}$, dont l'image s'identifiera selon l'énoncé au sous-groupe parabolique standard de type $J$ de $\gh^0_{\textbf{f}, s}$. Pour le voir, observons que, si la somme $f_{\textbf{f}}(a) + f_{\textbf{f}}(-a)>0$ est strictement positive, alors elle est égale à la plus grand longueur des intervalles ouverts de $\RR \setminus \Gamma_a'$. Par suite, $f_{\textbf{f}_J}(a) =f_{\textbf{f}}(a) $ pour toute racine $a \notin \Phi_{\textbf{f}}$ et, d'autre part, par hypothèse $ f_{\textbf{f}_J}(a)$ est égale à $f_{\textbf{f}}(a)$ si $a \in \Phi_{\textbf{f}}^+ \cup \Phi_{\textbf{f}_J}$ et à son successeur dans $\Gamma_a'$ sinon. De ces considérations et du lem. \ref{lemme du radical unipotent}, il découle que $Q_{\textbf{f}_J,\textbf{f}}$ est justement le sous-groupe pseudo-parabolique de type $J$. On voit de même, compte tenu de la surjectivité de $\uh_{a, \textbf{f}}(\oh) \rightarrow \uh_{a, \textbf{f}}(\kappa)$ et $\Zh^0(\oh) \rightarrow \Zh^0(\kappa)$, vu que $\oh$ est hensélien, que l'application $\gh^0_{\textbf{f}_J}(\oh) \rightarrow Q_{\textbf{f}_J, \textbf{f}}(\kappa)$ est surjective. Pour régler la dernière affirmation, il faut tout simplement remarquer que le noyau de la projection $\gh^0_{\textbf{f}}(\oh) \rightarrow \gh^0_{\textbf{f},s}(\kappa)$ est contenu dans $\gh^0_{\textbf{f}'}(\oh)$, d'après le 4.6.8 (i) de \cite{BTII}.

Le cas général se déduit de ce qui précède par conjugaison, en observant que, d'une part, $N_{\textbf{f}}^0$ permute transitivement les alcôves qui contiennent $\textbf{f} $ et le sous-groupe $U_{\textbf{f}}$ engendré par les $U_{a, \textbf{f}}$ pour $a \in \Phi$ permute transitivement les appartements contenant $\textbf{f}$ (voir la prop. 7.4.6. de \cite{BTI}), tandis que, d'autre part, les $\kappa$-sous-groupes paraboliques de $\gh^0_{\textbf{f},s}$ sont conjugués d'un standard par un certain élément de $\gh^0_{\textbf{f},s}(\kappa)$. L'injectivité résulte de ce que chaque paire de facettes sont contenues dans un même appartement $\ah(G,S,K)$, tandis que la partie des points fixes de $\gh^0_{\textbf{f}}(\oh)$ dans celui-ci est égal à l'adhérence de $\textbf{f}$ (voir \cite{BTI}, prop. 7.4.5.).
\end{proof}

\subsection{} \label{descente etale}
Maintenant on va descendre l'immeuble le long d'une extension non ramifiée à la suite de \cite{BTII}, V (plus récemment, G. Prasad a trouvé dans \cite{PrDesc} une stratégie différente pour y parvenir, ce qui ne nous aide pas forcément dans notre cadre non réductif). Soient $K$ un corps discrètement valué et hensélien, $\oh$ son anneau d'entiers supposé excellent, $\kappa$ son corps résiduel supposé parfait, et $G$ un groupe quasi-réductif quelconque sur $K$. Si l'on note $K^{\text{nr}}$ la plus grande extension algébrique non ramifiée de $K$, alors $G$ devient quasi-déployé sur $K^{\text{nr}}$, cf. cor. \ref{pseudo-steinberg}, qui on peut appliquer grâce au fait que $K^{\text{nr}}$ est un corps $C1$ (voir \cite{Lan52}), et on dispose donc d'un immeuble $\ih(G, K^{\text{nr}})$, qui est opéré par $\Gamma:=\text{Gal}(K^{\text{nr}}/K)$ de façon compatible avec l'opération naturelle de ce dernier sur $G(K^{\text{nr}})$. La définition naïve de l'immeuble $\ih(G,K)$ serait donc la partie $\ih(G, K^{\text{nr}})^{\Gamma}$ et on définirait les modèles en groupes parahoriques $\gh^0_{\Omega}$ associés à des parties de $\ih(G, K)$ comme les descendus étales de tels modèles définis au-dessus de $\oh^{\text{nr}}$. Cependant pour qu'on déduise de bonnes propriétés géométriques et combinatoires de ces objets, il faut travailler encore comme dans le § 5. de \cite{BTII}. Pour être plus précis, on aura besoin des propriétés combinatoires de l'immeuble $\ih(G, K^{\text{nr}})$ joint à l'opération de $\Gamma$ pour déduire l'existence d'une valuation de la donnée radicielle abstraite $(Z(K), U_a(K))_{a \in \Phi}$ dans $G(K)$, où $S$ est un tore déployé maximal de $G$, $Z=Z_G(S)$ est le centralisateur de $S$ et $U_a$ est le sous-groupe radiciel attaché à la racine $a \in \Phi:=\Phi(G,S)$ ; l'immeuble résultant sera identifié à la fin avec les points fixes $\ih(G, K^{\text{nr}})^{\Gamma}$.

\begin{lem}\label{tore breve-deploye defini sur corps base}
Il existe un tore $K^{\emph{nr}}$-déployé maximal $S^{\emph{nr}}$ de $G$ qui contient $S$ et qui est défini sur $K$.
\end{lem}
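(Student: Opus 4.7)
\noindent Le plan est de descendre via le modèle de Néron du centralisateur $Z := Z_G(S)$ et d'exploiter la géométrie de sa fibre spéciale. Premièrement, on se ramène à trouver un tore maximal $K^{\text{nr}}$-déployé de $Z$ défini sur $K$ contenant $S$ : en effet, tout $K^{\text{nr}}$-tore déployé de $G$ contenant $S$ commute à $S$ (car $S$ y agit trivialement), donc est inclus dans $Z$, et sa maximalité dans $G$ équivaut à celle dans $Z$.

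\noindent D'après la prop. \ref{modele de neron}, le $K$-groupe quasi-réductif et nilpotent $Z$ admet un modèle de Néron $\Zh$ dont la composante neutre $\Zh^0$ est un $\oh$-groupe lisse, affine et connexe ; l'inclusion $S \subseteq Z$ se prolonge alors canoniquement en un morphisme $\sh \rightarrow \Zh^0$ du $\oh$-tore déployé relevant $S$, dont l'image est centrale puisque $S$ l'est dans $Z$. Dans la fibre spéciale $\Zh^0_s$, la perfection de $\kappa$ assure que le $\kappa$-radical unipotent est défini sur $\kappa$ et que le quotient réductif $H = \Zh^0_s/R_u\Zh^0_s$ est un $\kappa$-groupe réductif, lequel, par le théorème classique de Borel-Tits, admet un tore maximal $\kappa^{\text{sép}}$-déployé défini sur $\kappa$ ; grâce à la lissité du radical unipotent et à la perfection de $\kappa$, on le relève en un sous-tore $T_s \subseteq \Zh^0_s$ défini sur $\kappa$ et maximal $\kappa^{\text{sép}}$-déployé. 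De plus, $\sh_s$ étant centrale, le produit $T_s\cdot \sh_s$ est $\kappa^{\text{sép}}$-déployé, d'où $\sh_s \subseteq T_s$ par maximalité.

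\noindent Il s'agit maintenant de relever $T_s$ en un $\oh$-sous-tore $\sh^{\text{nr}}$ de $\Zh^0$ contenant $\sh$. Considérons le $\oh$-schéma classifiant les sous-tores de $\Zh^0$ contenant $\sh$ et relevant un tore donné de la fibre spéciale : il est lisse sur $\oh$ par déformation infinitésimale des tores dans les schémas en groupes lisses, et son $\kappa$-point $T_s$ se relève en un $\oh$-point via le lemme de Hensel appliqué à $\oh$ hensélien. La fibre générique $S^{\text{nr}}$ est le $K$-sous-tore désiré de $Z \subseteq G$ contenant $S$. Comme $\oh^{\text{nr}}$ a pour corps résiduel $\kappa^{\text{sép}}$, le tore $\sh^{\text{nr}}\otimes \oh^{\text{nr}}$ est déployé si et seulement si sa fibre spéciale $T_s\otimes \kappa^{\text{sép}}$ l'est, ce qui est vrai par construction ; le même raisonnement appliqué à l'adhérence schématique d'un éventuel tore $K^{\text{nr}}$-déployé strictement plus grand que $S^{\text{nr}}$ contredirait la maximalité de $T_s$.

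\noindent Le point le plus délicat sera la construction du relèvement $T_s$ dans la fibre spéciale non réductive $\Zh^0_s$ : en effet, le passage entre un tore du quotient réductif $H$ et un sous-tore de $\Zh^0_s$ qui le relève repose sur la perfection de $\kappa$, laquelle est parmi nos hypothèses standards ; et le contrôle simultané de la centralité pour que $\sh_s \subseteq T_s$ demande l'argument de maximalité évoqué ci-dessus. Le reste de la preuve est essentiellement formel, s'appuyant sur la lissité des modèles de Néron connexes et sur les propriétés de descente henselienne.
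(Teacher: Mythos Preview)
Votre argument comporte une lacune essentielle : vous affirmez que $Z = Z_G(S)$ est quasi-réductif \emph{et nilpotent}, ce qui vous permet d'invoquer la prop.~\ref{modele de neron} pour obtenir un modèle de Néron. Or, au point du texte où se situe ce lemme (la descente non ramifiée), le groupe $G$ n'est plus supposé quasi-déployé : c'est précisément le but de cette section que de traiter le cas général. Si $G$ est anisotrope, par exemple $G = \mathrm{SL}_1(D)$ pour une algèbre à division $D$, alors $S = 1$ et $Z = G$ n'est nullement nilpotent. La prop.~\ref{modele de neron} ne s'applique donc pas, et l'existence d'un modèle de Néron de $Z$ sur $\oh$ n'est pas acquise à ce stade sans circularité (elle résulterait justement de la théorie qu'on est en train de construire).

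Le papier contourne cet écueil en exploitant l'immeuble $\ih(Z, K^{\text{nr}})$, qui a déjà été construit puisque $Z$ devient quasi-déployé sur $K^{\text{nr}}$ (cor.~\ref{pseudo-steinberg}). Le groupe $\Gamma = \mathrm{Gal}(K^{\text{nr}}/K)$ opère sur cet immeuble à orbites bornées, donc fixe un point $x$ par le théorème du point fixe de Bruhat--Tits ; le modèle parahorique $\Zh_x$ attaché à $x$ se descend alors sur $\oh$. C'est dans ce modèle-là qu'on relève un $\kappa$-tore maximal de la fibre spéciale en un $\oh$-tore, via la lissité du schéma des homomorphismes d'un tore (\cite{SGA3}, exp.~XI, cor.~4.2). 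La maximalité de $S^{\text{nr}}$ sur $K^{\text{nr}}$ découle enfin de l'égalité des rangs déployés entre fibre spéciale géométrique et fibre générique non ramifiée, valable pour les modèles parahoriques (prop.~4.6.4 de \cite{BTII}) ; votre argument de maximalité par adhérence schématique, lui, reposerait implicitement sur une propriété nérönienne que vous n'avez pas établie.
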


\begin{proof}
On peut se ramener aussitôt au cas où $G=Z$, dont les $K^{\text{nr}}$-tores déployés maximaux contiennent $S$ automatiquement. Observons que les orbites de l'opération de $\Gamma$ sur $\ih(Z, K^{\text{nr}})$ sont finies, car chaque appartement est associé à un tore déployé sur une extension finie de $K$. Par suite, $\Gamma$ fixe un point $x \in \ih(Z, K^{\text{nr}})$, cf. prop. 3.2.4. de \cite{BTI} et le $\oh^{\text{nr}}$-modèle parahorique $\Zh:=\Zh_x$ se descend en un $\oh$-groupe. Soient $\mathfrak{s}^{\text{nr}}$ un $\kappa$-tore maximal de $\Zh_s$ et $\sh^{\text{nr}}$ l'unique $\oh$-tore à isomorphisme unique près qui relève $\mathfrak{s}^{\text{nr}}$, en vertu de l'équivalence entre les catégories des algèbres étales et finies au-dessus de $\oh$ et $\kappa$. Grâce à la representabilité par un schéma lisse du foncteur des homomorphismes d'un tore dans un schéma en groupes lisse et affine, cf. cor. 4.2., exp. XI de \cite{SGA3}, le $\oh$-tore $\sh^{\text{nr}}$ s'envoie dans $\Zh$ par une application, qui est nécessairement une immersion fermée d'après le th. 6.8, exp. IX de \cite{SGA3}. Il s'ensuit que la fibre générique $S^{\text{nr}}:=\sh^{\text{nr}}_{\eta}$ s'identifie à un $K$-tore de $Z$, dont le $K^{\text{nr}}$-tore déduit par changement de base $K \rightarrow K^{\text{nr}}$ devient déployé maximal, car les rangs déployés de $\Zh_{\bar{s}}$ et $Z\otimes K^{\text{nr}}$ coïncident, cf. prop. 4.6.4 (i) de \cite{BTI}. 
\end{proof}

Fixons alors un tel tore $S^{\text{nr}}$, dont le appartement $\ah^{\text{nr}}:=\ah(G, S^{\text{nr}}, K^{\text{nr}})$ est invariant par $\Gamma$ et possède un point fixe. En fait $\ah:=\ah^{\text{nr}, \Gamma}$ en est un sous-espace affine de dimension égale au rang de $S^{\text{dér}}$. On note aussi $\ih^{\text{nr}}$ l'immeuble $\ih(G, K^{\text{nr}})$ de $G$ et $\ih:=\ih^{\text{nr}, \Gamma}$ la partie des points fixes par $\Gamma$. Choisissons une valuation $\varphi \in \ah$ invariante par $\Gamma$ de la donnée radicielle abstraite $(Z^{\text{nr}}(K^{\text{nr}}), U_{a^{\text{nr}}}(K^{\text{nr}}))_{a^{\text{nr}}\in \Phi^{\text{nr}}}$, où l'on note $Z^{\text{nr}}$ le centralisateur de $S^{\text{nr}}$. Pour chaque $a \in \Phi$ et $k \in \RR \cup \{+\infty\}$, posons $$U^{\text{nr}}_{a,k}=\prod_{a^{\text{nr}}}U^{\text{nr}}_{a^{\text{nr}},k} \prod_{(2a)^{\text{nr}}\in \Phi_{\text{nd}}} U^{\text{nr}}_{(2a)^{\text{nr}}, 2k}, $$ $$U_{a,k}=U^{\text{nr}}_{a,k} \cap U_a(K),$$ où $a^{\text{nr}}$ parcourt l'ensemble des racines de $G$ par rapport à $S^{\text{nr}}$ dont la restriction à $S$ est égale à $a$. Enfin on définit la fonction $\varphi_a(u)=\text{sup}\{k : u \in U_{a,k}\}$.

\begin{thm}
Les fonctions $\varphi_a$ définissent une valuation $\varphi$ de la donnée radicielle abstraite $(Z(K), U_a(K))_{a \in \Phi}$. L'immeuble $\ih(\varphi)$ associé à $\varphi$ s'identifie naturellement à $\ih$, de telle façon que l'appartement $\ah(\varphi)$ s'envoie sur $\ah$. Les racines affines de $\ih$ sont les intersections des racines affines de $\ih^{\emph{nr}}$ associées aux racines $a^{\emph{nr}} \in \Phi^{\emph{nr}} \setminus \Phi^{\emph{nr}}_0$.
\end{thm}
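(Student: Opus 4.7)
Le plan est de suivre assez fidèlement la stratégie de descente étale du §5 de \cite{BTII}, en l'adaptant au cadre quasi-réductif grâce aux outils construits aux paragraphes précédents. Toute la démonstration repose sur le fait que $G$ devient quasi-déployé sur $K^{\text{nr}}$ (cor. \ref{pseudo-steinberg}), et sur l'existence du tore $S^{\text{nr}}$ défini sur $K$ et $K^{\text{nr}}$-déployé maximal, cf. lem. \ref{tore breve-deploye defini sur corps base}. On travaillera tout au long avec $\varphi \in \ah$ que l'on fixe, ainsi qu'avec les sous-groupes $U^{\text{nr}}_{a,k}$ et $U_{a,k}$ de l'énoncé.

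Premièrement, pour vérifier que $\varphi=(\varphi_a)_{a \in \Phi}$ est une valuation, on commence par la bonne définition : si $u \in U_a(K)\setminus\{1\}$, il faut s'assurer que $\varphi_a(u)<+\infty$, ce qui résulte du fait que $u$ vu dans $U^{\text{nr}}_a(K^{\text{nr}})$ ne saurait appartenir à tous les $U^{\text{nr}}_{a,k}$ grâce à (V 1) appliquée à la valuation $\varphi^{\text{nr}}$. Les axiomes (V 0), (V 4), (V 5) ne posent aucune difficulté, et (V 3) se déduit directement de (V 3) pour $\varphi^{\text{nr}}$ en intersectant avec $G(K)$ l'inclusion des commutateurs, compte tenu que la somme d'une orbite de racines dans $\Phi^{\text{nr}}$ se restreint à un multiple de la somme restreinte. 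Pour (V 2), on observera que $N(K)$ se plonge dans $N^{\text{nr}}(K^{\text{nr}})$, où l'action de $m \in N(K)$ sur $\varphi_a$ s'exprime en termes de celle sur les $\varphi^{\text{nr}}_{a^{\text{nr}}}$. La compatibilité à $\omega$, $\varphi_a(zuz^{-1})-\varphi_a(u)=\omega(a(z))$, se hérite directement de la même propriété pour $\varphi^{\text{nr}}$ et du fait que $a^{\text{nr}}|_S=a$ pour tout relèvement $a^{\text{nr}}$. Le point délicat consiste à montrer que l'ensemble $\Gamma_a=\varphi_a(U_a(K)\setminus\{1\})$ est discret, ce qui repose sur une analyse cas par cas des sous-groupes radiciels selon le type de $\zeta_{a^{\text{nr}}}$ du quasi-système de Chevalley, analogue à celle du §5.1 de \cite{BTII}.

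Deuxièmement, pour identifier $\ih(\varphi)$ à $\ih$, on procède en trois temps. D'une part, la valuation $\varphi$ étant $\Gamma$-invariante par construction, son espace affine sous-jacent $\ah(\varphi)$ s'injecte dans $\ah^{\text{nr}, \Gamma}=\ah$ en prenant comme origine $\varphi$ et en envoyant $v \in V$ sur lui-même ; l'égalité de dimensions entre $V$ et $\ah$, qui valent toutes les deux le rang de $S^{\text{dér}}$, fournit la bijection. D'autre part, le stabilisateur $P_{\psi}$ d'un point $\psi \in \ah(\varphi)$ dans $G(K)$ coïncide avec $G(K) \cap P^{\text{nr}}_{\psi}$ par définition des $U_{a,k}$, de sorte que l'immeuble quotient $\ih(\varphi)=G(K)\times \ah(\varphi)/\sim$ s'envoie $G(K)$-équivariamment dans $\ih^{\text{nr}}$. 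L'obstacle principal sera de vérifier que cette application est surjective sur $\ih$ : étant donné $x \in \ih^{\text{nr}, \Gamma}$, il faut trouver un $K$-tore déployé maximal $S$ tel que $x \in \ah(G, S^{\text{nr}},K^{\text{nr}})$ avec $S^{\text{nr}}$ son commutant, pour lequel on applique le lem. \ref{tore breve-deploye defini sur corps base} au groupe $G_x:=\gh^0_{\{x\},s}$, en utilisant la conjugaison des tores déployés maximaux dans la fibre spéciale et Hensel pour remonter l'existence à $\gh^0_{\{x\}}$.

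Enfin, pour la description des racines affines, on se ramène à l'appartement $\ah=\ah^{\text{nr},\Gamma}$ et on analyse comment une racine affine $\alpha_{a^{\text{nr}},k}\subseteq \ah^{\text{nr}}$ se restreint à $\ah$. Si $a^{\text{nr}}|_S=0$, c'est-à-dire $a^{\text{nr}}\in \Phi^{\text{nr}}_0$, alors $\alpha_{a^{\text{nr}},k}$ contient $\ah$ tout entier ou ne le rencontre pas, donc ne produit pas d'hyperplan affine propre. Sinon, la restriction $a^{\text{nr}}|_{S}$ donne une racine $a \in \Phi$ et, en utilisant la définition $\varphi_a(u)=\sup\{k: u \in U_{a,k}\}$ combinée à un produit sur les relèvements $a^{\text{nr}}$, on obtient $\partial\alpha_{a,k}=\bigcap_{a^{\text{nr}}|_S=a}\partial\alpha_{a^{\text{nr}},k^{\text{nr}}} \cap \ah$ pour un $k^{\text{nr}}$ dépendant de la normalisation ; la caractérisation par l'action de $m(u)$ pour $u \in U_a(K)$, comme au cor. du \ref{donnees radicielles abstraites et valuees, appartements}, achève l'identification. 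La difficulté technique principale réside dans le contrôle des valeurs possibles de $\Gamma_a$ en fonction des $\Gamma_{a^{\text{nr}}}$, analogue au traitement de 5.1.19--5.1.23 de \cite{BTII}, qui devra être effectué séparément dans chacun des cas (classique, exotique, et absolument non réduit) pour les quasi-épinglages introduits en \ref{quasi-epinglages}.
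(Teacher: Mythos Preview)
Votre plan suit bien la stratégie de descente du \S5 de \cite{BTII}, mais la présentation diffère de celle de l'article sur deux points et laisse deux lacunes réelles.

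\textbf{Cadre utilisé.} L'article n'établit pas directement les axiomes (V~0)--(V~5) : il invoque le th.~9.2.10 de \cite{BTI}, machine conçue exactement pour ce problème de descente, qui les fournit dès qu'on a vérifié les conditions (DI~3) et (DV~2). Votre approche \og{}à la main\fg{} n'est pas fausse, mais elle vous oblige à refaire une partie du travail déjà encapsulé dans ce théorème (notamment (V~2) n'est pas si immédiat : le représentant $m(u)$ de la réflexion relative doit être relié aux $m(u^{\text{nr}})$, ce qui passe précisément par le contrôle de $\Gamma_a'$).

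\textbf{Première lacune : le point (DV~2), c'est-à-dire $\Gamma_a'=\Gamma_{a^{\text{nr}}}'$.} Vous le signalez comme \og{}le point délicat\fg{} et proposez une analyse cas par cas. L'article donne un argument uniforme et court qu'il faudrait plutôt reprendre : on part d'un $v\in U^{\text{nr}}_{a^{\text{nr}},k}\cap G(K_{a^{\text{nr}}})$ réalisant $k\in\Gamma_{a^{\text{nr}}}'$, on pose $x=\prod_{\sigma}\sigma(v)$, et l'on observe que $\sigma\mapsto x^{-1}\sigma(x)$ est un cocycle à valeurs dans le sous-groupe vectoriel engendré par les $U^{\text{nr}}_{\sigma(a^{\text{nr}})+\tau(a^{\text{nr}}),2k}$, dont le $H^1$ s'annule (lem.~5.1.17 de \cite{BTII}). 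On corrige alors $x$ en un $u\in U_a(K)$ avec $\varphi_a(u)=k$. Sans cet argument de cobord, votre \og{}cas par cas\fg{} devra traiter des commutateurs entre relèvements galoisiens, ce qui est nettement plus lourd.

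\textbf{Seconde lacune : surjectivité sur $\ih$.} Votre argument trouve bien, pour $x\in\ih$, un tore $K^{\text{nr}}$-déployé maximal $S_1^{\text{nr}}$ défini sur $K$ avec $x\in\ah_1^{\text{nr}}$, et vous ramène par conjugaison à $S_1=S$. Mais cela ne suffit pas : rien ne garantit a priori que $\ah_1=\ah$, puisque $Z=Z_G(S)$ peut contenir plusieurs tores $K^{\text{nr}}$-déployés maximaux définis sur $K$. L'article comble ce trou à l'aide de la prop.~7.6.4 de \cite{BTI} (existence d'un appartement contenant les $S(K)$-orbites de $\varphi$ et de $x$) jointe au lem.~9.2.6 de \cite{BTI}, qui force alors $\ah_1=\ah$. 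Votre recours au lemme de Hensel sur la fibre spéciale ne remplace pas cette étape. Notez aussi que la condition (DI~3) --- maximalité des facettes de $\ah$ parmi les facettes $\Gamma$-stables de $\ah^{\text{nr}}$ --- se vérifie dans l'article via le th.~\ref{th. paraboliques et parahoriques} (absence de $\kappa$-parabolique propre dans $\gh^0_{\textbf{f},s}$), point que votre esquisse n'aborde pas.
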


\begin{proof}
Pour démontrer le théorème, il faut encore vérifier les hypothèses (DI 3) et (DV 2) du th. 9.2.10. de \cite{BTI}, les restantes étant évidentes ou déjà vues. Prenons alors une facette maximale $\textbf{f}$ de $\ah^{\text{nr}}$ rencontrant $\ah$ et soit $\textbf{f}'$ une autre facette dont l'adhérence contient $\textbf{f}$ et qui rencontre $\ih$. Grâce à la bijection du th. \ref{th. paraboliques et parahoriques}, on se ramène à montrer que $\gh^0_{\textbf{f}, s}$, entendu en tant que $\kappa$-groupe, n'admet aucun sous-groupe $\kappa$-parabolique propre. Quitte à conjuguer, un tel $Q_{\textbf{f}', \textbf{f}}$ contient $\sh_s$ et donc le centralisateur $\Zh_s\supseteq \sh^{\text{nr}}_s$ ; il en résulte que $\textbf{f}'$ appartient à $\ah^{\text{nr}}$ et est invariante par $\Gamma$, ce qui entraîne $\textbf{f}'=\textbf{f}$ par hypothèse.

Quant à (DV 2), on montre notamment que l'inclusion $\Gamma_{a}'\subseteq \Gamma_{a^{\text{nr}}}'$ est une égalité. On fait usage d'abord de la structure des modèles $\uh^{\text{nr}}_{a^{\text{nr}}}$ pour qu'on trouve un élément $v \in U^{\text{nr}}_{a^{\text{nr}},k} \cap G(K_{a^{\text{nr}}}) \setminus U_{a^{\text{nr}},>k}U_{(2a)^{\text{nr}}, 2k}$, où $k \in \Gamma_{a^{\text{nr}}}'$ et $K_{a^{\text{nr}}}$ est le corps fixe par le stabilisateur $\Gamma_{a^{\text{nr}}}$ de $a^{\text{nr}}$. En écrivant $x=\prod_{\sigma \in \Gamma/\Gamma_a}\sigma(v)$ pour un ordre quelconque, on observe que $\sigma \mapsto x^{-1}\sigma(x)$ est un cocycle à valeurs dans le groupe des commutateurs des $U^{\text{nr}}_{\sigma(a^{\text{nr}}), k}$, qui est contenu dans le sous-groupe engendré par les $ U^{\text{nr}}_{\sigma(a^{\text{nr}})+\tau(a^{\text{nr}}), 2k}$. Vu le 4.6.2. de \cite{BTII}, ce sous-groupe est sous-jacent à un $\oh$-modèle de $\prod_{\sigma, \tau} U^{\text{nr}}_{\sigma(a^{\text{nr}})+\tau(a^{\text{nr}})}$, qui est un groupe vectoriel, et on lui applique le lem. 5.1.17. de \cite{BTII} afin de déduire l'annulation du premier groupe de cohomologie. En particulier, on tire $x^{-1}\sigma(x)=y^{-1}\sigma(y)$ avec $y \in \prod_{\sigma, \tau} U^{\text{nr}}_{\sigma(a^{\text{nr}})+\tau(a^{\text{nr}}), 2k}$, donc $u=yx^{-1} \in G(K)$ et, par construction même de $u$, l'on a $k \in \Gamma_a'$.

À ce stade, il s'ensuit du th. 9.2.10. de \cite{BTI} que $\varphi$ est une valuation de la donnée radicielle abstraite dont il s'agit, et les props. 9.1.17 et 9.2.15. de \cite{BTI} en fournissent une injection $\ih(\varphi)\hookrightarrow \ih$ équivariante, identifiant le membre de gauche à la plus grande partie de $\ih$ contenant $\ah$ et stable sous l'action de $G(K)$. Enfin il suffit de montrer que $\ih=G(K)\ah$. En raisonnant comme dans le lem. \ref{tore breve-deploye defini sur corps base}, on réalise un point donné $x\in \ih$ dans un appartement $\ah^{\text{nr}}_1$ invariant par $\Gamma$. Soient $S^{\text{nr}}_1$ le $K$-tore $K^{\text{nr}}$-déployé maximal associé, $S_1$ le $K$-sous-tore déployé maximal de $S^{\text{nr}}_1$ et $\ah_1=\ah^{\text{nr}}_1\cap \ih$. Vu que les tores déployés maximaux sont conjugués, on peut supposer $S_1 =S$. Grâce à la prop. 7.6.4. de \cite{BTI} (comparer avec le 5.1.3. de \cite{BTII}), on peut trouver un appartement $\ah^{\text{nr}}_2$ contenant les $S(K)$-orbites de $\varphi$ et $x$ et il s'ensuit que $\ah_1=\ah$, cf. lem. 9.2.6. de \cite{BTI}.
\end{proof}

En conclusion, nous avons construit l'immeuble $\ih(G,K)$ du groupe quasi-réductif $G$ sur le corps valué $K$ et donc on peut définir les modèles immobiliers $\gh^0_{\Omega}$ d'une partie $\Omega$ non vide et bornée d'un appartement $\ah(G, S,K)$ comme étant le $\oh$-descendu du $\oh^{\text{nr}}$-modèle correspondant $\gh^{\text{nr},0}_{\Omega}$ (ce qui a été fait mille fois pendant la démonstration !). Ceci est le plus grand modèle en groupes affine, lisse et connexe de $G$ tel que ses points avec valeurs dans $\oh^\text{nr}$ fixent $\Omega$. Il peut de même s'écrire comme recollement fermé du seul $\oh$-modèle en groupes parahorique $\Zh$ de $Z$ (qui n'est pas toujours un modèle de Néron connexe !) et des modèles en groupes convenables $\uh_{a,\Omega}$ de $U_a$, pour chaque racine $a \in \Phi_{\text{nd}}$, voir le 5.2.4. de \cite{BTII}.

La plupart des énoncés des numéros précédents se descendent de manière évident dans la topologie étale - c'est le cas de la prop. \ref{compatibilite avec solleveld} et du th. \ref{th. paraboliques et parahoriques}, dont la vérification est laissée au soin du lecteur.

\section{Applications aux grassmanniennes affines}\label{section grassmanniennes}

Ici on explique le lien étroit entre le caractère quasi-réductif d'un groupe sur un corps local ou global et la propreté de sa grassmannienne affine.
\subsection{} \label{critere geometrique de parahoricite}

Dans \cite{Rich}, T. Richarz a donné un critère géométrique en termes des grassmanniennes affines suffisant et nécessaire pour qu'un $\oh$-modèle affine, lisse et connexe d'un groupe réductif sur $K$ soit parahorique, où $K$ est un corps discrètement valué complet de caractéristiques égales à corps résiduel parfait $\kappa$. Ici nous montrons que ce critère s'étend aux groupes affines et lisses sur les corps discrètement valués complets de caractéristiques égales et résiduellement parfait. Mais d'abord on fait des rappels sur la théorie des grassmanniennes affines, ainsi qu'elles avaient été introduites de façon homogène par X. Zhu \cite{ZhuMix} :

\begin{defn}
Soient $K$ un corps discrètement valué complet, $\oh$ son anneau d'entiers et $\kappa$ le corps résiduel, qu'on suppose parfait. Étant donné un $\oh$-groupe plat et affine $\gh$, la grassmannienne affine $\text{Gr}_{\gh}$ est le pré-faisceau en $\kappa$-algèbres parfaites $R$ classifiant les classes d'isomorphisme de $\gh$-torseurs $\eh$ au-dessus de $W_{\oh}(R) $ munis d'une trivialisation $\beta$ au-dessus de $W_{\oh}(R)\otimes_{\oh} K$, où $W_{\oh}(R):=\oh\hat{\otimes}_{\kappa}R$ si $K$ est de caractéristiques égales ou $W_{\oh}(R) :=\oh\hat{\otimes}_{W(\kappa)}W(R)$ sinon.
\end{defn}

On sait que les grassmanniennes affines sont représentables par des ind-schémas ind-quasi-projectifs parfaits (voir \cite{ZhuIntro} en caractéristiques égales, auquel cas elles possèdent une réalisation canonique en tant que schémas, et \cite{BhSchGr} en inégales). Lorsque $\gh$ est lisse, la grassmannienne affine $\gh$ s'écrit comme le faisceau quotient $LG/L^+\gh$ pour la topologie étale, où l'on note $G$ la fibre générique de $\gh$, $LG$ le groupe de lacets donné par $R \mapsto G(W_{\oh}(R)\otimes K)$ et $L^+\gh$ le groupe d'arcs défini par $R \mapsto \gh(W_{\oh}(R))$. L'énoncé suivant caractérise les groupes (presque) parahoriques en fonction de la géométrie de $\text{Gr}_{\gh}$.
 
\begin{thm}\label{projectivite de la grassmannienne au cas quasi-reductif}
On garde les notations ci-dessus. Pour que $\emph{Gr}_{\gh}$ soit ind-projective, il faut et il suffit que $\gh^0$ soit un groupe parahorique.
\end{thm}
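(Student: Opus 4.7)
Le plan se divise en la suffisance et la nécessité du critère, se prêtant à des approches analogues à celles employées par T. Richarz dans \cite{Rich} pour le cas réductif. On se ramène d'abord à $\gh = \gh^0$ à fibres connexes, puisque le morphisme canonique $\text{Gr}_{\gh^0} \rightarrow \text{Gr}_{\gh}$ est un revêtement à fibres discrètes, d'où l'équivalence des ind-projectivités.

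\textbf{Suffisance.} Supposons $\gh = \gh_{\textbf{f}}^0$ un modèle parahorique attaché à une facette $\textbf{f}$ d'un appartement de $\ih(G, K)$, le groupe $G$ étant quasi-réductif. D'après les décompositions de Bruhat-Iwahori de la sous-section \ref{decompositions de bruhat et application de kottwitz}, la grassmannienne $\text{Gr}_{\gh}$ se stratifie en cellules de Schubert indexées par le quotient $\widetilde{W}_{\textbf{f}} \backslash \widetilde{W} / \widetilde{W}_{\textbf{f}}$. Pour chaque expression réduite $w = s_{i_1} \cdots s_{i_r}$, on construit une variété de Bott-Samelson $D_w$ comme empilement itéré de fibrations en les quotients $\gh_{\textbf{f}_{i_j}}^0/\gh_{\textbf{f}}^0$, où $\textbf{f}_{i_j} \succcurlyeq \textbf{f}$ correspond à une sous-facette de codimension $1$. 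Le théorème \ref{th. paraboliques et parahoriques} identifie chacune de ces fibres, après perfection, à la variété de drapeaux d'un sous-groupe parabolique de rang $1$ du quotient réductif de la fibre spéciale, donc projective. Par construction, $D_w$ est ainsi un schéma parfait projectif se surjectant proprement sur l'adhérence de la cellule de Schubert correspondante, ce qui établit l'ind-projectivité de $\text{Gr}_{\gh}$.

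\textbf{Nécessité.} Réciproquement, supposons $\text{Gr}_{\gh}$ ind-projective. Montrons d'abord que la fibre générique $G$ est quasi-réductive. Si tel n'était pas le cas, $G$ contiendrait un $K$-sous-groupe isomorphe à $\GG_a$ comme facteur d'un cran d'une filtration de $R_{ud, K} G$. Considérant l'adhérence schématique $\gh'$ de ce sous-groupe dans $\gh$ et le morphisme induit $\text{Gr}_{\gh'} \rightarrow \text{Gr}_{\gh}$, on observe que $\text{Gr}_{\gh'}$ contient un sous-ind-schéma parfait isomorphe à $\varinjlim_n (\mathbb{A}^n_{\kappa})^{\text{perf}}$, non ind-propre, dont la fermeture de l'image dans $\text{Gr}_{\gh}$ ne peut être propre, d'où une contradiction. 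Supposons ensuite, par absurde, que $\gh$ ne soit pas parahorique. En appliquant le théorème du point fixe de Bruhat-Tits aux orbites bornées de $\gh(\oh^{\text{nr}})$ dans $\ih(G, K^{\text{nr}})$, on trouve une partie $\Omega$ non vide et bornée fixée par $\gh(\oh^{\text{nr}})$, d'où une inclusion stricte $\gh \subsetneq \gh_{\Omega}^0$ en sous-$\oh$-groupes ouverts d'un parahorique. Le morphisme induit $\text{Gr}_{\gh} \rightarrow \text{Gr}_{\gh_{\Omega}^0}$ admet alors des fibres isomorphes au quotient perfecté $(\gh_{\Omega}^0/\gh)^{\text{perf}}$, qui est un schéma affine parfait de dimension strictement positive. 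L'ind-propreté de la source induirait la propreté de ces fibres, impossible puisqu'elles sont affines non réduites à un point.

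Le point le plus délicat sera la construction précise des résolutions de Bott-Samelson dans le cadre quasi-réductif : si les décompositions combinatoires de \ref{decompositions de bruhat et application de kottwitz} et la structure de la fibre spéciale établie à \ref{parahoriques et paraboliques} fournissent l'ossature nécessaire, la présence de sous-groupes radiciels de dimension supérieure à $1$ (issus des restrictions à la Weil), l'intervention d'isogénies très spéciales en caractéristiques $2$ ou $3$, ainsi que les phénomènes propres à $\text{BC}_n$ en caractéristique $2$, exigent un contrôle fin des morphismes entre modèles parahoriques et du comportement géométrique de leurs fibres spéciales.
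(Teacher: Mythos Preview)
Your architecture follows the paper's, but the necessity direction has two genuine gaps.

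For quasi-reductivity of $G$: taking the schematic closure $\gh'$ of a $\GG_a$-subgroup is the right start, but $\gh'$ is an arbitrary flat $\oh$-model of $\GG_a$, and your claim that $\text{Gr}_{\gh'}$ contains $\varinjlim_n (\AAA^n)^{\text{perf}}$ is unjustified --- the shape of $\text{Gr}_{\gh'}$ depends delicately on $\gh'$. The paper's device is to sandwich $\gh'$ between two \emph{smooth} connected models $\ah_1 \to \gh' \to \ah_2$, each isomorphic to $\GG_{a,\oh}$ as a group scheme: then $\text{Gr}_{\ah_i}$ is explicitly infinite affine space, the composite $\text{Gr}_{\ah_1}\to\text{Gr}_{\ah_2}$ is surjective, hence so is $\text{Gr}_{\gh'}\to\text{Gr}_{\ah_2}$, contradicting ind-projectivity. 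You also need normality of the $\GG_a$ (inside $R_{ud,K}G$, itself normal in $G$) to know that $\text{Gr}_{\gh'}\to\text{Gr}_{\gh}$ is a closed immersion; a $\GG_a$ sitting as a filtration step of $R_{ud,K}G$ need not be normal in $G$.

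For parahoricity of $\gh$: the fixed-point argument gives only a \emph{morphism} $\gh \to \gh^0_{\textbf{f}}$ extending the identity on $G$, not an open immersion, and the fibre of the induced map on Grassmannians over the base point is $L^+\gh^0_{\textbf{f}}/L^+\gh$, not $(\gh^0_{\textbf{f}}/\gh)^{\text{perf}}$ --- the latter does not even make sense here. Showing that $L^+\gh^0_{\textbf{f}}/L^+\gh$ cannot be projective unless $\gh=\gh^0_{\textbf{f}}$ is the real content: the paper observes that the kernel of $L^+\gh^0_{\textbf{f}} \to \gh^{0,\text{réd}}_{\textbf{f},s}$ is pro-unipotent, so projectivity forces $L^+\gh$ to be the preimage of a $\kappa$-parabolic of the reductive quotient; taking $\textbf{f}$ \emph{maximal} among facets stabilised by $\gh(\oh)$ then forces this parabolic to be the whole group via th.~\ref{th. paraboliques et parahoriques}. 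On the sufficiency side your outline is essentially correct, but the Demazure construction must be carried out at the alcove $\textbf{a}$ using its codimension-$1$ walls, after first reducing from $\textbf{f}$ to $\textbf{a}$ via the projective quotient $L^+\gh^1_{\textbf{f}}/L^+\gh^1_{\textbf{a}}$; note also that with $\textbf{f}_{i_j}\succcurlyeq\textbf{f}$ the group $\gh^0_{\textbf{f}_{i_j}}$ is the smaller one, so your quotient is written in the wrong order.
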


\begin{proof}
Notons qu'on peut toujours supposer $\kappa$ algébriquement clos, puisque les propriétés d'être quasi-réductif et parahorique se descendent en la topologie étale, et aussi que $\gh=\gh^0$, puisque l'application naturelle $\text{Gr}_{\gh^0} \rightarrow \text{Gr}_{\gh}$ est un recouvrement fini étale de son image fermée, dont un nombre fini de translatés recouvrent $\text{Gr}_{\gh}$. 

Traitons premièrement la suffisance. Comme $\kappa$ est algébriquement clos, $G$ est automatiquement quasi-déployé, cf. cor. \ref{quasi-deploye}. Soit $\textbf{f}$ une facette de l'appartement $\ah(G, S, K)$ de l'immeuble $\ih(G, K)$. Il sera plus convenable de considérer les fixateurs intermédiaires $\gh^1_{\textbf{f}}$, pour qu'on aie des BN-paires en niveau iwahorique. Soient $S_{w, \textbf{f}}$ les images fermées de $L^+\gh^1_{\textbf{f}} \rightarrow \text{Gr}_{\gh^1_\textbf{f}}, g\mapsto gn_w$, où $n_w$ désigne un représentant quelconque de $w \in W_{\textbf{f}}\setminus W^1_{\text{af}}/W_{\textbf{f}}$ et signalons que leur réunion forme un recouvrement fermé topologique de la grassmannienne, donc il suffit de montrer leur projectivité. En observant que $L^+\gh^1_{\textbf{f}}/L^+\gh^1_{\textbf{a}}$ est représentable par un schéma parfait lisse et projectif, on se ramène au cas où $\textbf{f}=\textbf{a}$. Après une translation par un élément de $N(K)$ qui laisse $\textbf{a}$ invariante, on peut supposer $w\in W_{\text{af}}$, ce qu'on fera désormais, et on considère une décomposition réduite $\wf=[s_{i_1}: \dots:s_{i_n}]$ de $w=s_{i_1} \dots s_{i_n}$ en des réflexions $s_i$ par rapport aux cloisons $\textbf{f}_i$ de $\textbf{a}$. Notons $D_{\wf}=L^+\gh^1_{\textbf{f}_{i_1}} \times^{L^+\gh^1_{\textbf{a}}} \dots \times^{L^+\gh^1_{\textbf{a}}} L^+\gh^1_{\textbf{f}_{i_n}}/L^+\gh^1_{\textbf{a}}$ la variété de Demazure associée à la décomposition réduite $\wf$. On montre par induction sur $n$ que ce faisceau est projectif sur $\kappa$ et le morphisme produit $D_{\wf} \rightarrow \text{Gr}_{\gh_{\textbf{a}}}$ se factorise à travers son image fermée $S_{w, \textbf{a}}$, qui est donc projective aussi.

Pour démontrer la réciproque, il faut observer d'abord que $G$ est quasi-réductif. Notons premièrement que, si $H_1 $ est un sous-$K$-groupe fermé et distingué de $H_2$, alors l'adhérence schématique $\hh_1$ de $H_1$ dans n'importe quel modèle entier plat $\hh_2$ de $H_2$ l'est aussi, et le faisceau quotient $\hh_2/\hh_1$ est représentable par un schéma en groupes affine et plat, d'après le th. 4.C. de \cite{Anan}. Par suite, l'application $\text{Gr}_{\hh_1} \rightarrow \text{Gr}_{\hh_2}$ est une immersion fermée, cf. prop. 1.2.6. de \cite{ZhuIntro}. Appliquant cela au radical unipotent déployé $R_{ud}G$ de $G$ et à une suite de composition de ce dernier à quotients successifs isomorphes à $\GG_a$, on arrive à un modèle entier plat $\ah$ de $\GG_a$ dont la grassmannienne affine est ind-projective. Notons qu'on peut trouver des $\oh$-modèles lisses et connexes $\ah_i$, $i=1,2$, de $\GG_a$ isomorphes à $\GG_a$ en tant que groupes (mais non en tant que modèles), tels qu'il y ait des morphismes $\ah_1 \rightarrow \ah \rightarrow \ah_2$. Il s'avère par des calculs explicites avec les groupes de lacets que $\text{Gr}_{\GG_a}$ s'identifie à l'espace affine infini et que le morphisme composé $\text{Gr}_{\ah_1}\rightarrow \text{Gr}_{\ah_2}$ est surjectif, d'où la même affirmation pour $\text{Gr}_{\ah}\rightarrow \text{Gr}_{\ah_2}$, ce qui est en contradiction avec l'ind-projectivité du premier. 

La dernière étape se fait comme dans la source originale et l'on suppose $\gh=\gh^0$ connexe. Comme $\gh(\oh) \subseteq G(K)$ est un sous-groupe borné pour la topologie induite par la valuation de $K$, il s'ensuit qu'il stabilise une facette de $\ih(G, K)$ (voir le th. 3.3.1. de \cite{BTI}), qu'on peut supposer maximale sans perdre de généralité, d'où une inclusion de $\gh(\oh)$ dans $P^\dagger_{\textbf{f}}$ (voir la démonstration de la prop. \ref{parahoriques a la kottwitz}). Alors le 1.7 de \cite{BTII} en fournit un morphisme schématique $\gh \rightarrow \gh_{\textbf{f}}^\dagger$ qui se factorise par connexité à travers $\gh^0_{\textbf{f}}$. La fibre sur la section unité de l'application $\text{Gr}_{\gh} \rightarrow \text{Gr}_{\gh^0_{\textbf{f}}}$ est représentable par un schéma parfait projectif et s'identifie à $ L^+\gh^0_{\textbf{f}}/L^+\gh$. Vu que le noyau de $L^+\gh^0_{\textbf{f}} \rightarrow \gh^{0, \text{réd}}_{\textbf{f}, s}$ est un $\kappa$-groupe pro-unipotent, on a nécessairement que le sous-groupe $L^+\gh$ de $L^+\gh^0_{\textbf{f}}$ est l'image réciproque d'un certain $\kappa$-sous-groupe parabolique $P$ de $\gh_{\textbf{f},s}^{0,\text{réd}}$. Par maximalité de $\textbf{f}$, on en tire $P=\gh^{0,\text{réd}}_{\textbf{f},s}$ (cf. th. \ref{parahoriques et paraboliques}), ce qui entraîne $L^+\gh=L^+\gh^0_{\textbf{f}}$, et donc l'égalité cherchée. 
\end{proof}

Observons que, lorsqu'on se donne un groupe pseudo-réductif exotique basique $G$, alors la grassmannienne affine parfaite $\text{Gr}_{\gh_{\Omega}}$ s'identifie à $\text{Gr}_{\overline{\gh}_{\Omega}}$, où l'on note $\overline{\gh}_{\Omega}$ le modèle immobilier correspondant du cousin déployé $\overline{G}$, tandis que l'application de leurs réalisations schématiques canoniques induit toujours un homéomorphisme universel entre leurs variétés de Schubert, qui n'est presque jamais birationnel. Dans \cite[§7.2]{HLR}, on trouvera une application de la théorie de Bruhat-Tits pour les groupes pseudo-réductifs au problème de déterminer tous les groupes $G$ modérément ramifiés dont les variétés de drapeaux affines sont réduites (dans le cas où le revêtement simplement connexe n'est pas étale).

Donnons-nous une courbe lisse et connexe $X$ sur un corps parfait $\kappa$. On dispose d'un ind-schéma $\text{Gr}_{\gh,X}$ au-dessus de $X$ pour chaque $X$-groupe $\gh$, qui redonne la grassmannienne affine classique au-dessus de chaque point fermé. T. Richarz a montré dans \cite{RichErr} que, pour que la fibre générique $\text{Gr}_{\gh,X,\eta}$ soit ind-projective, il faut que le $\kappa(X)$-groupe $G:=\gh^0_{\eta}$ soit quasi-réductif et il a conjecturé que ce critère est suffisant. Voici la confirmation pour les groupes pseudo-réductifs.

\begin{cor}
Soit $G$ un $\kappa(X)$-groupe pseudo-réductif. Alors, $\emph{Gr}_{G, \eta}:=\emph{Gr}_{\gh,X,\eta}$ est ind-projectif.
\end{cor}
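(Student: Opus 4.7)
La stratégie consiste à ramener l'énoncé au th.~\ref{projectivite de la grassmannienne au cas quasi-reductif} appliqué fibre à fibre sur un ouvert dense bien choisi de $X$. Puisque la fibre générique $\text{Gr}_{\gh, X, \eta}$ ne dépend que de $G$ et non du modèle $\gh$, on peut remplacer $\gh$ par n'importe quel autre modèle lisse et affine de $G$ défini au-dessus d'un ouvert dense $U \subseteq X$. En outre, la formation de la grassmannienne affine commutant aux extensions de corps et l'ind-projectivité descendant le long d'une extension finie séparable, on peut supposer $G$ quasi-déployé quitte à prendre un revêtement fini étale convenable de $X$, en vertu du cor.~\ref{pseudo-steinberg}.

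Soient alors $S$ un tore déployé maximal de $G$ et $\textbf{a}$ une facette de l'appartement $\ah(G, S, \kappa(X))$. Le point technique principal serait de construire un ouvert dense $U \subseteq X$ et un $U$-modèle $\gh_{\textbf{a}}$ lisse, affine et connexe de fibre générique $G$, dont la complétion en chaque point fermé $u \in U$ coïncide avec le $\hat{\oh}_{U,u}$-modèle parahorique de $G \otimes \hat{\kappa(X)}_u$ attaché à $\textbf{a}$ par la section~\ref{schemas en groupes immobiliers}. Pour ce faire, je propose d'étaler les données globales nécessaires --- à savoir une présentation pseudo-réductive fournie par le th.~\ref{presentation de groupes pseudo-reductifs} et un quasi-système de Chevalley de la prop.~\ref{proposition quasi-epinglages} --- au-dessus d'un $U$ convenable, puis de reproduire la construction par recollement birationnel du th.~\ref{theoreme d'existence et unicite des groupes attaches aux drs} dans ce cadre relatif, en exploitant la compatibilité des modèles des sous-groupes radiciels $\uh_{a, \textbf{a}}$ et du Cartan $\Zh^0$ aux complétions aux points fermés. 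Il s'agit là de l'obstacle principal : il faudrait vérifier avec soin que chaque étape de la construction par données radicielles schématiques et lois birationnelles, écrite à l'origine au-dessus d'un anneau de valuation discrète, se globalise sur la base $U$ en préservant la compatibilité souhaitée avec les complétions.

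Une fois ce modèle $\gh_{\textbf{a}}$ construit, le th.~\ref{projectivite de la grassmannienne au cas quasi-reductif} fournit l'ind-projectivité de la fibre $\text{Gr}_{\gh_{\textbf{a}}, u}$ en tout point fermé $u \in U$. Pour propager cette propriété à la fibre générique, je m'appuierais sur les résolutions de Demazure $D_{\textbf{w}} \to S_w \subseteq \text{Gr}_{\gh_{\textbf{a}}, U}$ associées aux décompositions réduites $\textbf{w}$ des éléments du groupe d'Iwahori-Weyl : elles se construisent globalement sur $U$ comme des tours successives de $\PP^1$-fibrés et sont donc propres sur $U$, d'où la propreté de leurs images $S_w$. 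Par suite, $\text{Gr}_{\gh_{\textbf{a}}, U}$ est ind-propre sur $U$ ; étant par ailleurs ind-quasi-projective, elle est ind-projective, et sa fibre générique $\text{Gr}_{G, \eta}$ l'est aussi, ce qu'il fallait démontrer.
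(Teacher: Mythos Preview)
Your strategy --- globalize the parahoric construction over an open $U \subseteq X$ and then run a Demazure-type argument --- is reasonable in spirit but has a genuine gap. First a formulation issue: there is no apartment $\ah(G,S,\kappa(X))$, since Bruhat--Tits theory requires a discrete valuation and $\kappa(X)$ is a global function field; what you presumably intend is to spread out a quasi-pinning and use the resulting Chevalley--Steinberg point $\varphi_x$ in each local apartment, but there is no single global facet $\textbf{a}$. More seriously, the heart of your construction --- reproducing the donnée radicielle schématique over $U$ --- requires a global model $\Zh^0$ of the Cartan $Z$ whose completion at every closed point of $U$ is the connected Néron model. This is not a spreading-out triviality: the Néron model is a local construction, and its globalization for an arbitrary commutative pseudo-reductive group is precisely the delicate point. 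The paper handles it by passing to the minimal-type quotient $G^{\text{prtm}}$ (whose maximal-bounded structure is transparent, since $i_{G^{\text{prtm}}}$ is injective into a Weil restriction of a torus) and then controlling the kernel $N$ via its largest smooth subgroup and injectivity of $H^1(N,\widehat{\oh}_{X,x}) \to H^1(N,\widehat{K}_{X,x})$ almost everywhere. Your sketch does not supply this argument, and without it the global $\gh_{\textbf{a}}$ cannot be built.

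The paper's route avoids constructing a global parahoric model altogether. It observes that the locus of proper fibres of $\text{Gr}_{\gh,X}$ is constructible, so ind-projectivity of the generic fibre follows once one shows that, for \emph{some} spreading-out $\gh$ of $G$, the completion $\gh \otimes \widehat{\oh}_{X,x}$ is parahoric at almost every closed $x$. This fibrewise check is then decoupled: for the moderate universal cover $\widetilde{G}$ one reduces to primitive factors and compares integral points with those of the split cousin $\overline{G}$ via cohomological vanishing for the (finite-type) kernel of $G \to \overline{G}$; for the Cartan one runs the Néron-model argument above. Your global Demazure resolutions, even granted a global Iwahori model, would be an alternative to the constructibility step, but the Cartan analysis is unavoidable either way.
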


\begin{proof}
Vu que l'ensemble des fibres propres est constructif, le fait que $\text{Gr}_{G,\eta}$ soit projectif revient à dire que $G \otimes \widehat{\oh}_{X,x}$ est parahorique presque partout, c'est-à-dire au-dessus des points fermés d'un ouvert assez petit $ U $ non vide de $X$ où $G$ est de plus encore défini. Il suffit de vérifier l'assertion pour le revêtement modérément universel $\widetilde{G}$ et pour le Cartan $Z$. 

Vu que la condition d'être parahorique se vérifie à l'aide des points entiers, on peut aussi supposer $G$ primitif. Si $G$ est simplement connexe, cela est immédiat. Sinon on considère l'épimorphisme $f:G \rightarrow \overline{G}$ défini au-dessus d'un ouvert non-vide de $X$ et induisant une égalité des points rationnels sur chaque corps local complet $\widehat{K}_{X,x}:=\widehat{\oh}_{X,x}$. On affirme que $G(\widehat{\oh}_{X,x})\rightarrow \overline{G}(\widehat{\oh}_{X,x})$ est aussi bijectif presque partout, lequel se vérifie plus commodément avec chaque terme $Z$ ou $U_a$ de la grosse cellule grâce à l'injectivité presque partout de $H^1(N, \widehat{\oh}_{X,x})\rightarrow H^1(N, \widehat{K}_{X,x})$, où $N$ est le noyau de la restriction de $f$ à $Z$ ou $U_a$. En effet, vu que $N$ s'identifie dans chacun des cas à un produit de restrictions des scalaires de groupes plats et finis, cela résulte d'une application de la suite spectrale de Leray et du critère valuatif de propreté (comparer avec la prop. 4.2.1., le lem. 4.2.5. et la prop. 4.2.9. de \cite{Ros}).

Lorsque $G=Z$ est commutatif, il faut montrer que $G$ s'étend en son propre modèle de Néron connexe au-dessus d'un ouvert assez petit. Si $G$ est de type minimal, c'est-à-dire si l'application naturelle $i_G:G \rightarrow \Res_{K'/K}G'$ vers la restriction des scalaires du quotient réductif géométrique est injective, alors l'assertion devient évidente, puisque $G'$ est un tore. Sinon on considère l'extension $1\rightarrow N \rightarrow G \rightarrow G^{\text{prtm}}\rightarrow 1$, régardée ici comme étant défini sur l'ouvert $U$, et l'on affirme que, quitte à diminuer celui-ci, alors l'application $N(\widehat{\oh}_{X,x})\rightarrow N(\widehat{K}_{X,x}) $ est bijective et $H^1(N, \widehat{\oh}_{X,x})\rightarrow H^1(N, \widehat{K}_{X,x})$ est injective pour tout $x \in U$. L'injectivité générique du premier groupe de cohomologie est un fait général déjà observé dans le paragraphe précédent. D'autre part, vu que par hypothèse $N$ n'admet qu'un nombre fini de points à valeurs dans une extension séparable, son plus grand sous-groupe fermé lisse $N^{\text{lis}}$ est fini étale, cf. \cite{CGP}, lem. C.4.1. Cet état des choses peut être supposé valide sur notre ouvert $U$, d'où l'égalité cherchée, d'après le critère valuatif de propreté appliqué à $N^{\text{lis}}$. Grace à ces hypothèses, nous vérifions sans peine que $G(\widehat{\oh}_{X,x})$ est le produit fibré de $G^{\text{prtm}}(\widehat{\oh}_{X,x})$ et $G(\widehat{K}_{X,x})$ sur $G^{\text{prtm}}(\widehat{K}_{X,x})$, lequel n'est autre que le plus grand sous-groupe borné de $G(\widehat{K}_{X,x})$, d'après la prop. \ref{modele de neron} (en effet, les caractères rationnels de $G$ et de $G^{\text{prtm}}$ coïncident).
\end{proof}

La démonstration précédente montre que le critère géométrique conjecturé par T. Richarz est étroitement liée avec la conj. I de \cite{BLR}, 10.3. De plus, si tout groupe unipotent ployé commutatif était le quotient unipotent d'un certain groupe pseudo-réductif commutatif, comme l'a demandé B. Totaro dans \cite{Tot} ques. 9.11, nous pourrions déduire de cela la vérité de cette conjecture dans le cadre ci-dessus d'une courbe lisse sur un corps parfait. En matière de cette question, quelques progrès ont été accomplis par R. Achet (voir \cite{Ach19a} et \cite{Ach19b}).

{\scriptsize \medskip\subsection*{Note ajoutée aux épreuves} Après l'écriture de ce texte, je me suis aperçu qu'il était loisible d'effacer l'hypothèse du corps résiduel parfait, tout en conservant l'excellence et l'henselité, grâce au raisonnement suivant. Étant donnés un groupe quasi-réductif $G$ quasi-déployé sur le corps $K$ et un sous-tore déployé maximal $S$ de $G$, alors l'image $S'$ de $S$ dans le quotient réductif $G'$ de $G$ sur le corps de définition $K'$ du radical unipotent géométrique est encore un sous-tore déployé maximal, faisant de $G'$ un groupe quasi-déployé au-dessus de $K'$. De plus, on obtient la relation suivante entre leurs systèmes de racines suivant $S$ resp. $S'$: $\Phi'$ est contenu dans $\Phi$ et contient toutes les racines non multipliables $\Phi_{\text{nm}}$ de $\Phi$. D'autre part, il s'avère que l'application canonique $i_G: G \rightarrow \Res_{K'/K} G'$ induit une injection entre les point rationnels des sous-groupes radiciels. On peut alors construire une valuation $\varphi$ de $G(K)$ en prenant l'image réciproque d'une valuation $\varphi'$ de $G(K)$, autrement dit en posant $\varphi_a=\varphi'_a$ ou $\frac{1}{2}\varphi'_{2a}$, selon que $a$ appartient à $\Phi'$ ou non. Ceci recadre la preuve de la prop. \ref{verificacao da valorizacao} et les restantes propositions de cet article en résultent, sans même comprendre à fond la classification de \cite{CP} (voir \cite{LouDiss} pour plus de détails).}

\end{document}